\crefname{theorem}{Theorem}{Theorems}
\crefname{theorem}{Theorem}{Theorems}
\crefname{mainthm}{Theorem}{Theorems}
\crefname{lemma}{Lemma}{Lemmas}
\crefname{lem}{Lemma}{Lemmas}
\crefname{remark}{Remark}{Remarks}
\crefname{discussion}{Discussion}{Discussions}
\crefname{prop}{Proposition}{Propositions}
\crefname{defn}{Definition}{Definitions}
\crefname{corollary}{Corollary}{Corollaries}
\crefname{cor}{Corollary}{Corollaries}
\crefname{section}{Section}{Sections}
\crefname{figure}{Figure}{Figures}
\crefname{quest}{Question}{Questions}
\numberwithin{equation}{section}
\newcommand{\N}{\mathbb{N}}
\newcommand{\Z}{\mathbb{Z}}
\newcommand{\R}{\mathbb{R}}
\newcommand{\C}{\mathbb{C}}
\newcommand{\cH}{\mathcal{H}}
\newcommand{\cM}{\mathcal{M}}
\newcommand{\cR}{R}
\newtheorem{theorem}{Theorem}[section]
\newtheorem{lemma}[theorem]{Lemma}
\newtheorem{proposition}[theorem]{Proposition}
\newtheorem{corollary}[theorem]{Corollary}
\theoremstyle{definition}
\newtheorem{definition}[theorem]{Definition}
\newtheorem{remark}[theorem]{Remark}
\newcommand{\acts}{\operatorname{\curvearrowright}}
\newcommand{\abs}[1]{\left| #1 \right|}
\DeclareMathOperator{\Ad}{Ad}
\DeclareMathOperator{\fix}{Fix}
\DeclareMathOperator{\Ind}{Ind}
\newcommand{\Ch}[1]{\mathrm{Ch}\left(#1\right)}
\newcommand{\Tr}[1]{\mathrm{Tr}\left(#1\right)}
\newcommand{\PD}[1]{\mathrm{PD}_1\left(#1\right)}
\DeclareMathOperator{\Pker}{Pker}
\newcommand{\Sub}[1]{\mathrm{Sub}\left(#1\right)}
\title{Non-uniform higher-rank lattices are character rigid}
\author[A. Dogon, M. Glasner, Y. Gorfine, L. Hanany, A. Levit]
{\mbox{Alon Dogon, Michael Glasner, Yuval Gorfine, Liam Hanany and Arie Levit}}
\begin{document}
\maketitle
\begin{abstract}
We establish character rigidity for all non-uniform higher-rank irreducible lattices in semisimple groups of characteristic other than $2$. This implies stabilizer rigidity for probability measure preserving actions and rigidity of invariant random subgroups, confirming a conjecture of Stuck and Zimmer for non-uniform lattices in full generality.
\end{abstract}

\section{Introduction}
Irreducible lattices in higher rank semisimple Lie groups are rigid in several ways. One celebrated such result is the normal subgroup theorem of Margulis \cite{margulis1978quotient,margulis1979finiteness}, which says that residual finiteness is the only obstacle preventing those lattices from being simple. More precisely, every non-trivial normal subgroup of an irreducible lattice in a higher-rank center-free semisimple Lie group has finite index. Notably, the normal subgroup theorem has been established by Margulis for all such irreducible lattices, including ones without Kazhdan's property (T). 

A strengthening of the normal subgroup theorem is the theorem of Stuck and Zimmer \cite{stuck1994stabilizers}. It states that every ergodic probability measure preserving action of an irreducible lattice in a higher-rank center-free semisimple Lie group with property (T) is either essentially transitive or essentially free. While stronger than the normal subgroup theorem, the Stuck--Zimmer theorem is not known in general for irreducible lattices in products of rank-one groups. This remains an important open question  \cite[Conjecture 5.3]{gelander2024things}. For some tentative recent progress in that direction see  \cite{bader2024spectral}.

A trace on a discrete group is a normalized, conjugation invariant, positive-definite function. A character is an indecomposable trace. A standard construction, due to Vershik \cite{vershik2011nonfree}, associates a trace on a discrete group to a probability measure preserving action. Via this construction, the normal subgroup and Stuck--Zimmer theorems have yet another far reaching generalization called  character rigidity. It was first conjectured by Connes \cite{Jo00}. 

\begin{definition}
\label{def:character rigidity}
    A discrete group $\Gamma$ is  \textit{character rigid} if every character
 of $\Gamma$ is either finite-dimensional (i.e. is of the form $\frac{1}{\dim \pi} \mathrm{tr} \circ \pi$ for some finite-dimensional irreducible unitary representation $\pi$), or vanishes outside the center $\mathrm{Z}(\Gamma)$.
\end{definition}

\label{def page: semisimple group}
For us, a \textit{semisimple group} is a group of the form $H=\prod_{i=1}^n\mathbf{H}_i(k_i)$, where each $k_i$ is a local field, and $\mathbf{H}_i$ is a semisimple $k_i$-algebraic group. The rank of this group is $\mathrm{rank}(H) = \sum_{i=1}^n \mathrm{rank}_{k_i}(\mathbf{H}_i)$.  Our main result is the following.

\begin{theorem}\label{main_thm}
Let $H$ be a semisimple group with $\mathrm{rank}(H) \ge 2$. Assume that $H$ admits an almost simple factor $L$ with $\mathrm{rank}(L) = 1$ and that $\mathrm{char}(k_1) \neq 2$.
Then every irreducible non-uniform lattice in $H$ is character rigid.
\end{theorem}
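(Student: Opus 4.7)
The plan is to fix a character $\varphi \in \Ch{\Gamma}$ and establish the dichotomy directly. Via the Gelfand--Naimark--Segal construction we obtain a finite von Neumann algebra $(M,\tau)$ together with a unitary representation $\pi \colon \Gamma \to \cU(M)$ with weakly dense image satisfying $\varphi = \tau \circ \pi$. Since $\varphi$ is indecomposable, $M$ is a factor, and it is either finite-dimensional---yielding the first alternative---or a $\mathrm{II}_1$ factor, in which case the goal reduces to showing $\tau(\pi(g)) = 0$ for every $g \notin Z(\Gamma)$.

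The first substantive step is to exploit the structure of $\Gamma$ as a non-uniform irreducible lattice of rank at least $2$. By Margulis' arithmeticity theorem $\Gamma$ is $S$-arithmetic, and contains an abundance of non-central unipotent elements drawn from the $k_i$-root subgroups of proper parabolic $k_i$-subgroups of the $\mathbf{H}_i$. The rank-one almost simple factor $L$ furnishes unipotents $u$ whose horospherical dynamics on $L/P_L$ is genuinely contracting, while Howe--Moore mixing controls the remaining factors of $H$. Combining these should produce, for each non-central unipotent $u$, sequences $\{h_n\} \subset \Gamma$ along which the conjugates $\pi(h_n u h_n^{-1})$ converge weakly in $M$ to a scalar; by trace invariance that scalar must equal $\tau(\pi(u))$, forcing $\pi(u) = \tau(\pi(u))\cdot 1$ and hence $\tau(\pi(u)) \in \{0,1\}$. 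If $\tau(\pi(u)) = 1$ for some non-central $u$, then $\pi(u) = 1$ and the normal subgroup $\ker \pi \leq \Gamma$ contains a non-central element; Margulis' normal subgroup theorem then forces it to have finite index, contradicting the assumption that $\varphi$ is not finite-dimensional. Thus $\tau(\pi(u)) = 0$ for every non-central unipotent $u$, and a final density argument---using that the normal closure in $\Gamma$ of the unipotent elements is of finite index---extends the vanishing to every $g \notin Z(\Gamma)$. The hypothesis $\mathrm{char}(k_1) \neq 2$ is expected to enter through the Steinberg-type commutator relations needed to arrange the contracting sequences, avoiding the degeneracies that can arise when $2 = 0$ in the underlying field.

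The main obstacle, and the reason the rank-one factor makes the problem genuinely harder, is the absence of Kazhdan's property (T) for $\Gamma$: the spectral-gap techniques underlying the character rigidity proofs of Bekka, Peterson, Peterson--Thom, and Bader--Boutonnet--Houdayer--Peterson in the property (T) setting do not apply here. To bypass this I would induce the GNS data to the ambient group $H$ and study the $\Gamma$-equivariant boundary maps from the Furstenberg--Poisson boundary $H/P$ into the state space of $M$; higher-rank projective factor theorems of Nevo--Zimmer type, together with charmenability-style tools that function without property (T), should force these boundary maps to be trivial and in turn underwrite the weak convergence $\pi(h_n u h_n^{-1}) \to \tau(\pi(u)) \cdot 1$ employed in the unipotent step above.
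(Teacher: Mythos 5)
Your overall frame (GNS construction, reduce to showing $\tau(\pi(g))=0$ off the center for infinite-dimensional characters) matches the paper, but the core mechanism you propose does not work, and it skips exactly the hard point. The claimed weak convergence $\pi(h_n u h_n^{-1}) \to \tau(\pi(u))\cdot 1$ cannot be extracted from Howe--Moore or horospherical dynamics: those control matrix coefficients of unitary representations of the ambient group $H$, whereas the GNS representation of a character of the discrete lattice $\Gamma$ does not extend to $H$, and without property (T) there is no spectral input forcing its matrix coefficients to decay along $\Gamma$. This is precisely the obstruction the paper's key Theorem \ref{theorem:on characters of G admitting AU and AV} overcomes, by entirely different means: Dirichlet's unit theorem (the hypothesis $|S|\ge 2$ gives an infinite-order element $a$ of an $F$-split torus inside $\Gamma$), a classification of characters of the solvable arithmetic group $\langle a\rangle\ltimes U$ into congruence versus induced-from-$U$ types (Propositions \ref{prop:rational if and only of finite orbit} and \ref{prop:characters of AU}), and the Raghunathan--Venkataramana theorem that $\langle U(\mathcal I),V(\mathcal I)\rangle$ has finite index --- which is also the only place $\mathrm{char}\neq 2$ enters, not Steinberg commutator relations. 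Moreover, even granting your weak convergence, the deduction ``hence $\pi(u)=\tau(\pi(u))\cdot 1$'' is false (a weak limit of conjugates being scalar does not make the element scalar), and the conclusion ``$\tau(\pi(u))\in\{0,1\}$'' is inconsistent with $\pi(u)$ being a scalar unitary, which would force $|\tau(\pi(u))|=1$.

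Two further gaps. First, vanishing of $\varphi$ on all non-central unipotents does not extend to all non-central $g$ by any ``density argument'': a trace vanishing on a normally generating set need not vanish on the group it generates. The paper instead handles an arbitrary non-central $g$ directly: using the rank-one Bruhat decomposition it writes $g=wmu$ with $w$ a Weyl element, computes $[g,a^n]=a^{2n}v_n$ with $v_n$ unipotent, and applies the generalized Bekka vanishing lemma (Lemma \ref{lemma:generalized bekka lemma}) together with the uniform decay $\sup_{u\in U}|\varphi(a^iu)|\to 0$. Second, your fallback via Nevo--Zimmer boundary maps and ``charmenability-style tools'' is essentially the paper's second proof, but charmenability alone (which is indeed known here by \cite[Theorem~B]{BBH}) only reduces the problem to showing that von Neumann amenable characters are finite-dimensional; that residual step still requires Theorem \ref{theorem:on characters of G admitting AU and AV} (or Peterson--Thom for $\mathrm{SL}_2(R)$), which your proposal does not supply.
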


Character classification results were initially proved for the infinite symmetric group $S_\infty$ by Thoma \cite{thoma1964unzerlegbaren} and Vershik--Kerov \cite{vershik1981characters}, the groups $\mathrm{GL}_\infty(k)$ where $k$ is a finite field by Skudlarek \cite{skudlarek1976unzerlegbaren} and the groups $\mathrm{SL}_n(k)$ where $k$ is an infinite field and $n \ge 3$ by Kirillov \cite{kirillov1965positive}. The first character rigidity result in the context of higher-rank lattices was obtained by Bekka \cite{BekkaCharRig} for the arithmetic group $\mathrm{SL}_n(\mathbb{Z)}$ where $n \ge 3$, and generalized by Peterson \cite{PetersonCharRig} and later by Bader--Boutonnet--Houdayer \cite{BBH} to higher-rank lattices with property (T). See also \cite{lavi2023characters} for higher-rank universal lattices.

In line with the Stuck--Zimmer conjecture, the phenomenon of character rigidity is conjectured to hold for every irreducible higher-rank lattice \cite[Problem 4.2]{houdayer2021noncommutative}. Generally speaking, up to the present work character rigidity was known provided the enveloping semisimple group admits a factor with property (T) \cite{BBH}. An exceptional result that does not fall into this description is the  work of Peterson and Thom \cite{PetersonThom}. They show that the group $\mathrm{SL}_2(R)$ is character rigid, where $R$ is a localization of the ring of integers of a number field admitting  an infinite order unit. Taking into account Margulis' arithmeticity theorem and Tits' classification of semisimple algebraic groups over number fields, it follows from \cite{PetersonThom} that any non-uniform irreducible lattice in a product of $\mathrm{SL}_2(k_i)$'s is character rigid, where the $k_i$'s are local fields of characteristic zero. We refer the reader  to Appendix \ref{appen} below for a further discussion of Tits' classification.

Our result is inspired by the work of Peterson and Thom \cite{PetersonThom}. We extend it to arbitrary non-uniform lattices in products of rank-one groups, in the generality outlined in Theorem \ref{main_thm}. More precisely, we show that if:
\begin{enumerate}
    \item $F$ is a global field of characteristic different from $2$;
    \item $\mathbf{G}$ is a connected, simply connected algebraic $F$-group with $\mathrm{rank}_F(\mathbf{G}) = 1$;
    \item $S$ is a finite set of places of $F$ that contains all  Archimedean places, and $\abs{S} \geq 2$;
    \item $R=F(S)$ is the ring of $S$-integral elements in $F$;
\end{enumerate}
then the arithmetic group $\Gamma = \mathbf{G}(R)$  is character rigid. The arithmeticity theorem shows that all non-uniform irreducible lattices  
in a semisimple group having a factor of rank-one are commensurable to an arithmetic group of this form (see Proposition \ref{prop:notationIsGood}). We treat character rigidity as an abstract property, and show that it is invariant under commensurability and central extensions for ICC groups having countably many finite-dimensional unitary representations. So our result holds regardless of the chosen representative from a commensurability class of an arithmetic group.
Combined with the  above-mentioned existing results by other authors, we get:

\begin{theorem}\label{main_thm_general}
Let $H$ be a semisimple group with $\mathrm{rank}(H)  \ge 2$.
Let $\Gamma$ be an irreducible lattice in $H$. Assume that (at least) one of the following holds:
    \begin{enumerate}
        \item the lattice $\Gamma$ is non-uniform and the characteristic of the local fields involved in $H$ is different from $2$, or
        \item the group $H$ has a non-compact factor with Kazhdan's property (T).
    \end{enumerate}
    Then $\Gamma$ is character rigid.
\end{theorem}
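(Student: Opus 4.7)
The plan is to reduce \Cref{main_thm_general} to \Cref{main_thm} together with the previously known character rigidity results of Peterson \cite{PetersonCharRig} and Bader--Boutonnet--Houdayer \cite{BBH}, by means of a short case analysis on the almost simple factor structure of $H$.

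First, if $H$ satisfies hypothesis (2), then $H$ has a non-compact factor with Kazhdan's property (T) and character rigidity of any irreducible lattice $\Gamma$ in $H$ is precisely the content of \cite{BBH} (building on \cite{BekkaCharRig,PetersonCharRig}). So there is nothing to do in this case, and the real content is hypothesis (1): assume from now on that $\Gamma$ is a non-uniform irreducible lattice in $H$ and that every local field $k_i$ appearing in the decomposition $H = \prod_i \mathbf{H}_i(k_i)$ has characteristic different from $2$.

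Under hypothesis (1) I would split into two sub-cases depending on whether $H$ has a rank-one almost simple factor. In sub-case (a), $H$ admits an almost simple factor $L$ of rank one; after relabelling the product so that $L$ sits over the first local field, the characteristic hypothesis gives $\mathrm{char}(k_1) \neq 2$, and \Cref{main_thm} applies directly to yield character rigidity of $\Gamma$. In sub-case (b), $H$ has no rank-one almost simple factor, so each almost simple factor has rank either $0$ or at least $2$. Rank-zero factors are compact and contribute nothing to $\mathrm{rank}(H)$; since the total rank is at least $2$, there must exist an almost simple factor of rank $\geq 2$. Such a factor over a local field is non-compact and has Kazhdan's property (T) by the classical higher-rank version of Kazhdan's theorem, so $H$ in fact satisfies hypothesis (2) and we reduce again to \cite{BBH}.

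The bookkeeping above is routine once \Cref{main_thm} is in hand; the main obstacle, and the substantive content of the paper, is therefore the proof of \Cref{main_thm} itself. I expect that proof to proceed by invoking Margulis' arithmeticity to replace $\Gamma$ by an $S$-arithmetic group of the form $\mathbf{G}(R)$ from the introduction, to use the stated invariance of character rigidity under commensurability and central extensions among ICC groups with countably many finite-dimensional unitary representations in order to pass freely within the commensurability class, and then to extend the methods of Peterson--Thom \cite{PetersonThom} from $\mathrm{SL}_2$ over rings of $S$-integers admitting an infinite order unit to the general setting of a connected, simply connected, $F$-rank-one algebraic $F$-group over an arbitrary global field of characteristic $\neq 2$.
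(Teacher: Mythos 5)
Your proposal is correct and follows essentially the same reduction as the paper: the property (T) case is delegated to prior work, the remaining case is handled by \Cref{main_thm}, and the key bookkeeping observation (that absence of a rank-one almost simple factor forces a non-compact higher-rank, hence property (T), factor) is the same. The only cosmetic difference is that the paper attributes the property (T) case to \cite{BBHP} for lattices of product type and to \cite{BBH} for the almost simple case, rather than to \cite{BBH} alone.
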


We partially resolve the long-standing open problem of Stuck and Zimmer for higher-rank irreducible lattices, fully establishing it in the non-uniform case.

\begin{corollary}
\label{cor:SZ intro}
Let $\Gamma$ be an non-uniform irreducible lattice as in Theorem \ref{main_thm_general}.
 Every ergodic probability measure preserving action $\Gamma \acts (X, \mu)$ is either essentially transitive or has $\mu(\fix(g)) = 0$ for every element $g \in \Gamma \setminus \mathrm{Z}(\Gamma)$. 
\end{corollary}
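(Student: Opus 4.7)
Given the ergodic probability-measure-preserving action $\Gamma \acts (X,\mu)$, I would associate its Vershik trace
$$\tau(g) := \mu(\fix(g)), \qquad g \in \Gamma,$$
which is normalized, conjugation-invariant (by measure preservation), and positive definite, hence a trace on $\Gamma$. The plan is to feed $\tau$ into Theorem \ref{main_thm_general} and then translate the resulting structural information back to the action.

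First, I would decompose $\tau$ as an integral of characters via the Choquet simplex structure of the trace space:
$$\tau = \int_{\mathrm{Ch}(\Gamma)} \chi \, d\eta(\chi).$$
By Theorem \ref{main_thm_general}, the probability measure $\eta$ is concentrated on the disjoint union of finite-dimensional characters and characters vanishing outside $\mathrm{Z}(\Gamma)$. Accordingly $\tau = \tau_{\mathrm{fd}} + \tau_Z$, with $\tau_Z$ vanishing on $\Gamma \setminus \mathrm{Z}(\Gamma)$.

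Next, I would transfer this decomposition back to the action by examining the associated invariant random subgroup $\nu := (\mathrm{Stab}_\Gamma)_*\mu$ on $\mathrm{Sub}(\Gamma)$. Consider the $\Gamma$-invariant measurable sets
$$X_{\mathrm{fi}} := \{x \in X : [\Gamma : \mathrm{Stab}_\Gamma(x)] < \infty\}, \qquad X_Z := \{x \in X : \mathrm{Stab}_\Gamma(x) \subseteq \mathrm{Z}(\Gamma)\}.$$
By ergodicity each of these has $\mu$-measure $0$ or $1$. The central claim to establish is $\mu(X_{\mathrm{fi}} \cup X_Z) = 1$: the ergodic decomposition of $\nu$ should refine the Choquet decomposition of $\tau$, so that ergodic components whose Vershik trace decomposes into finite-dimensional characters are supported on finite-index subgroups (via the identification with quasi-regular representations), while components whose Vershik trace decomposes into $\mathrm{Ch}_Z$-type characters are supported on subgroups of $\mathrm{Z}(\Gamma)$.

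Finally, by ergodicity one of $\mu(X_Z), \mu(X_{\mathrm{fi}})$ equals $1$. If $\mu(X_Z) = 1$, then for each $g \in \Gamma \setminus \mathrm{Z}(\Gamma)$ we have $g \notin \mathrm{Stab}_\Gamma(x)$ for $\mu$-a.e.\ $x$, which is exactly $\mu(\fix(g)) = 0$. If $\mu(X_{\mathrm{fi}}) = 1$, then $\mu$-a.e.\ orbit is finite; ergodicity then forces $X$ to coincide, modulo null sets, with a single finite $\Gamma$-orbit, i.e., the action is essentially transitive.

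\textbf{Main obstacle.} The hard part will be the transfer step: identifying the Choquet decomposition of $\tau$ with an ergodic decomposition of the IRS $\nu$. Extremal IRS need not correspond to extremal traces in general, so the argument must carefully exploit the interaction between the von Neumann algebra $\pi_\tau(\Gamma)''$ produced by the GNS construction of $\tau$ and the crossed product $L^\infty(X,\mu) \rtimes \Gamma$ in order to align the two decompositions. Once this alignment is in place, the dichotomy follows immediately from character rigidity.
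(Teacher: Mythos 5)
Your proposal starts the same way the paper does (the Vershik trace $\tau(g)=\mu(\fix(g))$), but the step you yourself flag as the ``main obstacle'' --- transferring the Choquet decomposition of $\tau$ back to a decomposition $X = X_{\mathrm{fi}} \cup X_Z$ of the action via the stabilizer IRS --- is the entire content of the proof, and it is left unestablished. As you note, extremal traces do not align with ergodic IRS components, and in fact the Vershik trace of an ergodic action need not even be a character, so ``the ergodic decomposition of $\nu$ should refine the Choquet decomposition of $\tau$'' is not something you can take for granted. There is also a second, subtler gap: knowing that $\eta$ is concentrated on finite-dimensional characters and central characters does not by itself split $\tau$ usefully, because a trace whose Bochner transform is a \emph{continuous} measure on finite-dimensional characters is weakly mixing and carries no finite-index stabilizer information (the paper makes exactly this point in the remark following the definition of weak mixing). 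This is why the hypothesis that $\Gamma$ has at most countably many finite-dimensional unitary representations is essential, and your proposal never invokes it.

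The paper's route avoids the transfer problem entirely. One first dichotomizes on orbit sizes: by ergodicity either almost every orbit is finite (in which case essential transitivity is immediate, as in your last step) or almost every orbit is infinite. In the latter case one quotes \cite[Proposition 3.1]{PetersonThom}: if almost every orbit is infinite, the Vershik trace $\tau$ is \emph{weakly mixing}. Then character rigidity, upgraded via the countability of finite-dimensional representations (Lemma \ref{lemma:applying finitely many}, with the countability supplied by \cite[Proposition 7.1]{BBHP}), says that every weakly mixing trace vanishes off $\mathrm{Z}(\Gamma)$, which is exactly $\mu(\fix(g))=0$ for $g \notin \mathrm{Z}(\Gamma)$. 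No decomposition of $\tau$ into characters and no analysis of the IRS is needed. To repair your argument you would either need to carry out the von Neumann algebraic alignment you sketch (essentially reproving the Peterson--Thom proposition in a harder form), or simply replace the transfer step by the orbit-size dichotomy and the weak mixing criterion.
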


An \emph{invariant random subgroup} of $\Gamma$ is a Borel probability measure on the Chabauty space $\Sub{\Gamma}$ of subgroups invariant under the $\Gamma$-action via conjugation \cite{abert2014kesten}.
It follows from Corollary  \ref{cor:SZ intro} that every ergodic invariant random subgroup of the non-uniform irreducible higher-rank lattice $\Gamma$ is either the uniform measure supported on the conjugacy class of some finite-index subgroup or the Dirac measure on some central subgroup. 
More so, Theorem \ref{main_thm_general} implies that the  lattices we consider are \emph{charfinite}, a notion introduced by Bader--Boutonnet--Houdayer--Peterson \cite{BBHP} (see Remark \ref{rem:charfinite} below).
It follows that the lattice $\Gamma$ enjoys several remarkable rigidity properties pertaining to uniformly recurrent subgroups and unitary representations, as outlined in \cite[\S3]{BBHP}.

The observation that character rigidity is invariant under central extensions (as long as the group has countably many finite-dimensional representations) does not require the extension to be finite.
Using the fact that irreducible lattices in higher-rank semisimple Lie groups (with no finite center  assumptions) have at most countably many finite-dimensional unitary representations, we get the following result.

\begin{theorem}\label{inf_ctr}
Let $H$ be a connected semisimple Lie group with $\mathrm{rank}_\R(H) \ge 2$,  no compact factors and arbitrary center. Let  $\Gamma$ an irreducible lattice in $H$. Assume either that  $\Gamma$ is non-uniform or that $H$ has a factor with Kazhdan's property (T). Then  the lattice $\Gamma$ is character rigid, and satisfies the conclusion of Corollary \ref{cor:SZ intro}.
\end{theorem}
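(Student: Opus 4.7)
My plan is to pass to the centerfree adjoint quotient $\bar H = H/\mathrm{Z}(H)$, apply \Cref{main_thm_general} there, and then lift character rigidity back along the (possibly infinite) central extension $\Gamma \twoheadrightarrow \bar \Gamma$ via the observation highlighted in the introduction that character rigidity is preserved under central extensions of groups possessing only countably many finite-dimensional unitary representations.

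First I would set $Z := \Gamma \cap \mathrm{Z}(H)$ and $\bar \Gamma := \Gamma/Z$, which is an irreducible lattice in $\bar H$; the latter is a centerfree connected semisimple Lie group of real rank $\ge 2$ without compact factors, hence falls under the setting of \Cref{main_thm_general} (viewed as a real-algebraic semisimple group, possibly after passing to a finite-index supergroup to absorb components). Both non-uniformness of $\Gamma$ in $H$ and property (T) for a factor of $H$ are inherited by the quotient, so \Cref{main_thm_general} applies and yields character rigidity of $\bar \Gamma$. By Borel density applied to $\bar \Gamma \subset \bar H$, the centralizer of $\bar \Gamma$ in $\bar H$ is trivial, so $\mathrm{Z}(\bar\Gamma) = \{e\}$; pulling this back, $\mathrm{Z}(\Gamma) = Z$, and consequently the ICC property of $\bar\Gamma$ is equivalent to $\Gamma$ being ICC modulo its center.

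Next I would invoke the central-extension principle for character rigidity. Its countability hypothesis on $\Gamma$ is precisely the fact cited in the introduction: irreducible lattices in higher-rank semisimple Lie groups (regardless of the center) admit only countably many finite-dimensional unitary representations, essentially because Margulis superrigidity packages each such representation as (the restriction of) an algebraic representation on a suitable cover of $H$, and algebraic representations of a semisimple Lie algebra form a countable set. Together with the ICC-modulo-center property established above, the principle then lifts character rigidity from $\bar\Gamma$ to $\Gamma$.

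The Stuck--Zimmer-type conclusion of \Cref{cor:SZ intro} for $\Gamma$ is then obtained via Vershik's construction in the standard way: an ergodic probability measure preserving action of $\Gamma$ produces an extremal trace $g \mapsto \mu(\fix(g))$, which by character rigidity must be either finite-dimensional (giving essential transitivity) or supported in $\mathrm{Z}(\Gamma)$ (giving the desired stabilizer rigidity). The main point of difficulty, and the reason this result is separated from \Cref{main_thm_general}, is verifying the countable-finite-dimensional-representations hypothesis for $\Gamma$ when $\mathrm{Z}(H)$ is infinite; this is precisely the input required to push the central-extension principle beyond finite extensions.
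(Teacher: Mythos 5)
Your proposal follows essentially the same route as the paper: pass to the adjoint (center-free) quotient, apply \Cref{main_thm_general} there, lift character rigidity back through the central extension via \Cref{prop:char_rig_extensions}, and deduce the Stuck--Zimmer conclusion from the Vershik trace. One remark: the step you correctly single out as the main difficulty --- that $\Gamma$ itself has only countably many finite-dimensional unitary representations when $\mathrm{Z}(H)$ is infinite --- is not quite closed by the appeal to superrigidity on a cover, since a priori the infinite center could act through uncountably many multiplicative characters. The paper handles this (Lemma \ref{lem:Lambda with infinite center has countably many fin dim}) by noting that in any irreducible finite-dimensional unitary representation the center acts by scalars, so the determinant gives a homomorphism to $S^1$ which must have finite image because $\Lambda$ has finite abelianization; hence every such representation factors through a finite central quotient, and the count reduces to the adjoint lattice. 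Also note that for the character-rigidity lift alone, \Cref{prop:char_rig_extensions} only needs countability of finite-dimensional representations for the \emph{quotient} $\Ad(\Gamma)$ (supplied by \cite[Proposition~7.1]{BBHP}); the countability for $\Gamma$ itself is needed only for the stabilizer-rigidity conclusion via \Cref{thm:stuck_zimmer_char_rig}.
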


Our interest in lattices with infinite center stems from the fact they are not necessarily residually finite, and as such, are possible candidates in the search for non-sofic or non-hyperlinear groups, see \cite{dogon2023flexible,chapman2024conditional,gohla2024high,dogon2025connections}.
These lattices are also the first examples of groups with infinite center which are operator-algebraic superrigid, in the sense of \cite{BekkaCharRig}.
Note that $W^*$-superrigid groups with infinite center have only recently been constructed  
\cite{donvil2024w, zbMATH07991641}.

\subsection{Main ideas}

A main ingredient to our proof is Theorem \ref{theorem:on characters of G admitting AU and AV}. It deals with arithmetic lattices $\Gamma = \mathbf G(R)$ as considered above, so that in particular $\mathrm{rank}_F(\mathbf G) = 1$ and $|S| \ge 2$.  It states that any weakly mixing trace of $\Gamma$ restricts to a mixing trace on any $F$-split torus. Moreover, its decay is uniform over  elements of the form $a^n u$, where $a$ generates a Zariski-dense subgroup of the $F$-split torus and $u$ is unipotent. Dirichlet's unit theorem guarantees that such an element $a$ exists. This is where our assumption that the lattice is non-uniform crucially comes into play.

Theorem \ref{theorem:on characters of G admitting AU and AV} is a generalization of the \enquote{main proposition} from the work of Peterson and Thom for $\mathrm{SL}_2$, i.e. \cite[Proposition 2.2]{PetersonThom}. In a nutshell, we rely on a careful analysis of the character theory of certain solvable arithmetic groups.
We define and study congruence characters of unipotent arithmetic groups. For an arithmetic unipotent group $U$ acted upon by a  semisimple element $a$, we show that a  character is congruence if and only if its $a$-orbit in the Thoma dual is finite (Proposition \ref{prop:rational if and only of finite orbit}). This generalizes the fact that rational points on a torus are precisely the periodic points for an ergodic torus automorphism. Moreover, we prove that characters of the semidirect product $M = \left<a\right>\ltimes U$ which restrict to a non-congruence trace on the unipotent subgroup $U$ are induced from it  (Proposition \ref{prop:characters of AU}). Now Theorem \ref{theorem:on characters of G admitting AU and AV} is inferred from an interplay of the mixing behavior of the split torus and the properties of the restriction of the trace to opposite parabolic subgroups.

Interestingly, we are able to provide two  different proofs of character rigidity (Theorem \ref{main_thm}), both relying on Theorem \ref{theorem:on characters of G admitting AU and AV}. The first proof is self-contained. It uses the structure of the Bruhat decomposition in rank one, and a vanishing lemma of Bekka (see Lemma \ref{lemma:generalized bekka lemma} below). The second proof is softer but is not self-contained. It uses the notion of \emph{charmenability} in order to reduce the question from a general non-uniform higher-rank lattice to a group of the form $\mathrm{SL}_2(R)$, where the work of Peterson--Thom applies directly.

\subsection{Structure of the paper}
Preliminaries on character theory are discussed in \S\ref{sec:2}, including disjointness of traces (\S\ref{sub:disjoint}) and weakly mixing and mixing traces (\S\ref{subsec:2.2}). In \S\ref{sec:3} we recall the classical construction of arithmetic lattices, and explain which algebraic groups over global fields are relevant for the lattices considered in this paper. We also fix some notations to be used throughout (see \S\ref{standing assumption}). In \S\ref{sec:4} we study the property of character rigidity under commensurability and central extensions. We infer that character rigidity is a commensurability invariant for higher-rank lattices. In \S\ref{sec:5} we start developing the theory needed to prove our main Theorem \ref{theorem:on characters of G admitting AU and AV}. We study characters and traces of certain subgroups of solvable arithmetic groups, and introduce the notion of congruence traces. In \S\ref{sec:6} we formulate and prove Theorem \ref{theorem:on characters of G admitting AU and AV} regarding the decay of weakly mixing characters on split tori, relying on machinery from \S\ref{sec:5}. In \S\ref{sec:7} we establish character rigidity and provide two proofs of Theorem \ref{main_thm}. The final  \S\ref{sec:8} is dedicated to proving the remaining results (Theorem \ref{main_thm_general} and  Corollaries \ref{cor:SZ intro} and \ref{inf_ctr}) as well as some further implications. In Appendix \ref{appen} we recall Tits' classification of semisimple algebraic groups, and give explicit descriptions of the relevant lattices in characteristic zero. 

\subsection{Acknowledgments} The authors would like to thank Uri Bader for suggesting to us that the ideas of Peterson and Thom might be applicable to a more general class of lattices. We would also like to thank Tomer Konforty for numerous  discussions during an earlier stage of this work,  Itamar Vigdorovich for conversations regarding disjointness and characters of solvable groups, Tsachik Gelander and Yair Glasner for  comments and suggestions. Lastly, the authors would like to thank Tel-Aviv University, where a major part of the research was done, for its hospitality.

\subsection{Funding information}
A.D. was supported by a Clore Scholars grant and ERC grant no. 882751.
L.H. was supported by ERC grant no. 882751.
A.L. acknowledges the support of BSF grant $\#2022105$ and ISF grant $\#1788/22$.

%\newpage

%\tableofcontents

%\newpage

\section{Character theory}\label{sec:2}
We recall some basic notions from the character theory of countable groups. We relate it to the unitary representation theory of the group, and to the representation theory of the group into tracial von Neumann algebras. The notions of disjointness and mixing are also considered. We refer the reader to \cite[Chapter 11]{BdlH} for a more thorough exposition to character theory, and to \cite{Popa} for more on tracial von Neumann algebras.
Throughout the section $\Lambda$ denotes a countable group.

\subsection{Basic notions}\label{subsec:2.1}

A function $\varphi : \Lambda \to \C$ is called \emph{positive definite} if for every $n \in \N$ and every choice of elements $g_1,\ldots,g_n \in \Lambda$ the matrix $\varphi(g_i^{-1}g_j)_{i,j}$ is positive definite. The set of positive definite functions on $\Lambda$ is denoted by $\mathrm{PD}({\Lambda})$. Any such function $\varphi \in \mathrm{PD}({\Lambda})$ satisfies $\|\varphi\|_\infty = \varphi(e_\Lambda)$.  A function $\varphi \in \mathrm{PD}({\Lambda})$ is called \emph{normalized} if $\|\varphi\|_\infty = \varphi(e_\Lambda)= 1$. The set of normalized positive definite functions on $\Lambda$ is denoted by $\PD{\Lambda}$.
\begin{definition}
\label{def:character}
A \emph{trace} $\varphi$ on the group $\Lambda$  is a conjugation-invariant function $\varphi \in \PD{\Lambda}$. The set of traces on $\Lambda$ is denoted by $\Tr{\Lambda}$.
\end{definition}

Alternatively, a trace $\varphi \in \ell^\infty(\Lambda)$ on the group $\Lambda$ can be viewed as a positive definite linear functional on the Banach $*$-algebra $\ell^1(\Lambda)$ whose norm as a functional is $1$ and which satisfies
$\varphi(gh)=\varphi(hg)$ for every pair of elements $g,h \in \Lambda$. 

The set $\Tr{\Lambda}$ is a compact convex set with respect to the topology of pointwise convergence.
We denote by $\Ch{\Lambda}$ the set of extremal points of $\Tr{\Lambda}$. Elements of $\Ch{\Lambda}$ are called \emph{characters}\footnote{Some authors use the term character for a (non-extremal). We reserve the word character only for extremal traces.} of $\Lambda$. The set $\Ch{\Lambda}$ is called the \textit{Thoma dual} of $\Lambda$.

The space $\Tr{\Lambda}$ is a metrizable Choquet simplex. It follows that for every trace $\varphi \in \Tr{\Lambda}$ there is a unique probability measure $\mu_\varphi \in \mathrm{Prob}(\Ch{\Lambda})$ such that:
\[ \varphi = \int_{\Ch{\Lambda}}  \psi \; \mathrm{d}\mu_\varphi(\psi)\]
The measure $\mu_\varphi$ is called the \emph{Bochner transform} of $\varphi$.

Associated to a positive definite function $\varphi \in \PD{\Lambda}$ is a canonical cyclic unitary representation called the \emph{Gelfand--Naimark--Segal (GNS)} construction.
Specifically, the \emph{GNS-triple} associated to the positive-definite function $\varphi$ is a triple $(\cH_\varphi, \pi_\varphi,\xi_\varphi)$ consisting of a Hilbert space $\cH_\varphi$, a unitary representation $\pi_\varphi : \Lambda \to U(\cH_\varphi)$ and a cyclic unit vector $\xi_\varphi \in \cH_\varphi$ satisfying 
$$ \varphi(g) = \langle\pi_\varphi(g) \xi_\varphi,\xi_\varphi\rangle \quad \forall g \in \Lambda.$$ 
The \emph{von Neumann algebra generated by $\pi_\varphi$} is  $\mathcal M_\varphi = \pi_\varphi(\Lambda)'' \leq B(\cH_\varphi)$.

Assume that the positive-definite function $\varphi$ is a trace, namely $\varphi \in \Tr{\Lambda}$. Then there exists a canonical unitary representation $\rho_\varphi:\Lambda \to U(\cH_\varphi) $ commuting with $\pi_\varphi$, such that $(\cH_\varphi, \rho_\varphi,\xi_\varphi)$ is another GNS-triple associated to $\varphi$.
Further, the linear functional on $B(\cH_\varphi)$ given by  $T \mapsto \langle T\xi_\varphi, \xi_\varphi\rangle$ yields a faithful, normal tracial state $\tau$ on $\cM_\varphi$,  whose composition with $\pi_\varphi$ recovers $\varphi$.
Such a state on a von Neumann algebra is called a \emph{trace}.
With slight abuse of notation, we will denote the resulting tracial von Neumann algebra by $(\cM_\varphi, \tau)$.
A fundamental result of Thoma relates characters of the group $\Lambda$ to its representation theory into tracial von Neumann algebras.
\begin{theorem}[Thoma \cite{thoma1964unitare,thoma1964positiv}]\label{thm:Thoma}
\label{thm:thoma}
Let $\Lambda$ be a countable group and   $\varphi:\Lambda \to \C$ be a function. The following are equivalent:
    \begin{enumerate}
        \item $\varphi \in \Tr{\Lambda}$; namely $\varphi$ is a trace on the group $\Lambda$.
        \item There exists a pair of commuting unitary representations $\pi, \rho: \Lambda \to U(\cH)$ on some Hilbert space $\mathcal{H}$ and a unit vector $\xi \in \cH$ such that $\pi(g)\rho(g) \xi = \xi$ and $\varphi(g) = \langle \pi(g) \xi, \xi\rangle =  \langle \rho(g) \xi, \xi\rangle$ for all $g \in \Lambda$.
        \item There exists a tracial von Neumann algebra $(\cM,\tau)$ and a unitary representation $\pi: \Lambda \to U(\cM)$ such that $\pi(\Lambda)'' =\cM$ and  $\varphi(g) = \tau(\pi(g))$ for all $g \in \Lambda$.
    \end{enumerate}
Given a trace $\varphi \in \Tr{\Lambda}$, the unitary representation $\pi: \Lambda \to U(\cM)$ inducing $\varphi$ is unique up to an equivariant, trace preserving $*$-isomorphism of $(\cM,\tau)$.
The trace $\varphi$ is a character if and only if $\cM$ is a factor.
\end{theorem}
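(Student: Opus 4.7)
The plan is to prove the cycle of implications $(3) \Rightarrow (1) \Rightarrow (2) \Rightarrow (3)$, with the Gelfand--Naimark--Segal (GNS) construction providing the bridge between the combinatorial viewpoint on $\varphi$ and the operator-algebraic viewpoint. The implication $(3) \Rightarrow (1)$ is a direct check: positive-definiteness of $\varphi(g) = \tau(\pi(g))$ follows from $\sum_{i,j} c_i \overline{c_j}\varphi(g_j^{-1}g_i) = \tau(T^*T) \geq 0$ with $T = \sum_i c_i \pi(g_i) \in \cM$, and conjugation invariance is immediate from the tracial property of $\tau$.

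For $(1) \Rightarrow (2)$, I would first apply the standard GNS construction to produce the triple $(\cH_\varphi, \pi_\varphi, \xi_\varphi)$, and then construct a second commuting representation $\rho$ on $\cH_\varphi$ by setting
\[ \rho(g)\bigl(\pi_\varphi(h)\xi_\varphi\bigr) := \pi_\varphi(hg^{-1})\xi_\varphi \]
on the dense subspace $\pi_\varphi(\Lambda)\xi_\varphi$. Conjugation invariance of $\varphi$ is exactly what forces $\rho(g)$ to be well-defined and isometric, since for any finitely supported coefficients $(c_i)$ and elements $(h_i)$ of $\Lambda$,
\[ \Big\| \sum_i c_i \pi_\varphi(h_i g^{-1})\xi_\varphi \Big\|^2 = \sum_{i,j} c_i \overline{c_j}\, \varphi(g h_j^{-1} h_i g^{-1}) = \sum_{i,j} c_i \overline{c_j}\, \varphi(h_j^{-1} h_i). \]
One then checks routinely that $\rho$ is a unitary group representation commuting with $\pi_\varphi$ and satisfying $\pi_\varphi(g)\rho(g)\xi_\varphi = \xi_\varphi$.

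For $(2) \Rightarrow (3)$, I would pass to the cyclic subspace $\overline{\pi(\Lambda)\xi}$ (which is $\rho$-invariant because $\rho$ commutes with $\pi$), set $\cM := \pi(\Lambda)''$, and define $\tau(T) := \langle T\xi, \xi\rangle$. The identity $\tau(\pi(g)\pi(h)) = \varphi(gh) = \varphi(hg) = \tau(\pi(h)\pi(g))$ on generators extends to the trace property $\tau(AB) = \tau(BA)$ on all of $\cM$ by normality of $\tau$ together with Kaplansky density. Faithfulness amounts to $\xi$ being separating for $\cM$; since $\rho(\Lambda) \subseteq \cM'$ by commutativity and $\rho(g)\xi = \pi(g^{-1})\xi$ follows from $\pi(g)\rho(g)\xi = \xi$, the set $\cM'\xi$ contains the dense subset $\pi(\Lambda)\xi$, so $\xi$ is cyclic for $\cM'$ and hence separating for $\cM$.

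Uniqueness up to equivariant trace-preserving $*$-isomorphism follows from the uniqueness of the GNS construction applied to the state $\tau$ on $\cM$. For the final claim, $\varphi$ is extreme in $\Tr{\Lambda}$ if and only if $\tau$ is extreme among normal tracial states on $\cM$, and by standard von Neumann algebra theory this is equivalent to $\mathrm{Z}(\cM)$ being trivial, that is, $\cM$ being a factor. I expect the main obstacle to be the construction and verification of the commuting representation $\rho$ in $(1) \Rightarrow (2)$: the well-definedness and isometricity rest on a careful use of conjugation invariance, after which the remaining implications and the factor criterion reduce to routine density and commutant arguments.
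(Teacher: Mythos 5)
The paper states this result as a quoted classical theorem of Thoma and supplies no proof, so there is no in-paper argument to compare with; your GNS-based cycle $(3)\Rightarrow(1)\Rightarrow(2)\Rightarrow(3)$ is the standard route and is essentially correct. Two points deserve attention. First, your $\rho$ satisfies $\rho(g)\xi_\varphi=\pi_\varphi(g^{-1})\xi_\varphi$, hence $\langle\rho(g)\xi_\varphi,\xi_\varphi\rangle=\varphi(g^{-1})=\overline{\varphi(g)}$. In fact the relation $\pi(g)\rho(g)\xi=\xi$ forces $\langle\rho(g)\xi,\xi\rangle=\overline{\langle\pi(g)\xi,\xi\rangle}$ for \emph{any} commuting pair, so the clause ``$\varphi(g)=\langle\rho(g)\xi,\xi\rangle$'' in item (2) can only hold for real-valued traces. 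This is a slip in the statement itself (the second diagonal coefficient should read $\overline{\varphi(g)}$, or equivalently $\rho$ should be regarded as a representation of the opposite group); your construction produces the intended object, but you should flag this clause explicitly rather than leave it unaddressed, since as written it is not provable.

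Second, in the step $(2)\Rightarrow(3)$ you use $\varphi(gh)=\varphi(hg)$ to get the trace identity on generators. At that point in the cycle you may not assume (1), so this must be derived from the data of (2) alone. It does follow, by the computation
$\langle\pi(gh)\xi,\xi\rangle=\langle\pi(g)\rho(h^{-1})\xi,\xi\rangle=\langle\rho(h^{-1})\pi(g)\xi,\xi\rangle=\langle\pi(g)\xi,\rho(h)\xi\rangle=\langle\pi(g)\xi,\pi(h^{-1})\xi\rangle=\langle\pi(hg)\xi,\xi\rangle$,
using $\pi(h)\xi=\rho(h^{-1})\xi$, the commutation of $\rho$ with $\pi(g)$, and unitarity of $\rho$. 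With that inserted, the remainder of your argument --- separate ultraweak continuity (or Kaplansky density) to extend traciality to $\cM$, cyclicity of $\xi$ for $\cM'$ giving that $\xi$ is separating and hence $\tau$ faithful, GNS uniqueness for the uniqueness clause, and the correspondence between convex decompositions of $\varphi$, normal tracial states on $\cM$, and central projections for the factor criterion --- is the standard and correct completion.
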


\subsection{Disjoint traces}
\label{sub:disjoint}

For a pair of traces $\varphi,\psi \in \Tr{\Lambda}$ we say\footnote{Our notation for dominated  traces is slightly non-standard. Indeed, some authors prefer to work with non-normalized traces instead.} that \emph{$\varphi$ dominates $\psi$} and write  $\psi \le \varphi$ if there is a scalar $t \in \left(0,1\right]$ such that $\varphi - t\psi \in \mathrm{PD}(\Lambda)$ \cite[\S11.C]{BdlH}.

\begin{definition}
\label{def:disjoint traces}
A pair of traces $\varphi_1, \varphi_2\in\Tr{\Lambda}$ are called \emph{disjoint} if there does not exist a trace $\psi \in \Tr{\Lambda}$ with $\psi \le \varphi_1$ and $\psi \le \varphi_2$.
\end{definition}

Recall that two unitary representations are called \emph{disjoint} if they do not admit isomorphic subrepresentations. The next proposition relates the notions of disjointness for traces, unitary representations and measures. We were not able to locate an explicit convenient reference in the literature, and include a proof for completeness. 

\begin{proposition}\label{prop:disj traces and gns}
Let $\varphi_1,\varphi_2 \in \Tr{\Lambda}$ be a pair of traces. The following three conditions are equivalent:
\begin{enumerate}
    \item The two traces $\varphi_1$ and $\varphi_2$ are disjoint.
    \item The two GNS-representations $\pi_{\varphi_1}$ and $\pi_{\varphi_2}$ are disjoint.
    \item The two Bochner transforms $\mu_{\varphi_1}$ and $\mu_{\varphi_2}$ are mutually singular as measures on the Thoma dual $\Ch{\Lambda}$.
\end{enumerate}
Furthermore, if a trace $\varphi \in \Tr{G}$ can be written as a non-trivial convex combination $\varphi = t\varphi_1 + (1-t)\varphi_2$ where $\varphi_1,\varphi_2\in\Tr{\Lambda}$ are disjoint and $t \in \left(0,1\right)$, then we have a disjoint decomposition of the corresponding GNS representation $\pi_\varphi = \pi_{\varphi_1} \oplus \pi_{\varphi_2}$. The cyclic vector $\xi_\varphi$ decomposes as a sum of cyclic vectors $\xi_\varphi = \xi_{\varphi_1} + \xi_{\varphi_2}$ such that
$$t\varphi_1(\cdot) = \left<\pi_{\varphi_1}(\cdot)\xi_{\varphi_1},\xi_{\varphi_1}\right> \quad \text{and} \quad (1-t)\varphi_2(\cdot) = \left<\pi_{\varphi_2}(\cdot)\xi_{\varphi_2},\xi_{\varphi_2}\right>.$$
\end{proposition}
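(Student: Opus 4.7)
My plan is to establish the three equivalences by translating the sub-trace order into a measure order on Bochner transforms, and then to deduce the ``furthermore'' claim via a standard GNS direct-sum argument.

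First I would prove (1) $\Leftrightarrow$ (3) using the Choquet-simplex structure of $\Tr{\Lambda}$. If $\psi \le \varphi$ with parameter $t \in (0,1]$, the function $\varphi - t\psi$ is still positive-definite and conjugation-invariant, so $\varphi = t\psi + (1-t)\psi'$ decomposes as a convex combination of traces. Uniqueness of the Bochner transform then forces $\mu_\varphi = t\mu_\psi + (1-t)\mu_{\psi'}$, i.e.\ $t\mu_\psi \le \mu_\varphi$ as Borel measures on $\Ch{\Lambda}$. Conversely, any positive measure $\nu \le \mu_\varphi$ produces a subordinate trace via the barycenter $\psi = \|\nu\|^{-1}\int \xi \, d\nu(\xi)$ with parameter $\|\nu\| \in (0,1]$. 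Consequently $\varphi_1$ and $\varphi_2$ share a subordinate trace if and only if $\mu_{\varphi_1} \wedge \mu_{\varphi_2} \ne 0$, which is the negation of mutual singularity.

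Next I would prove (1) $\Leftrightarrow$ (2) in two steps. For (2) $\Rightarrow$ (1) (contrapositive), a common subordinate trace $\psi$ of $\varphi_1$ and $\varphi_2$ gives decompositions $\varphi_i = t_i\psi + (1-t_i)\psi_i'$; the GNS triple of $\varphi_i$ then embeds as the cyclic subrepresentation of $\pi_\psi \oplus \pi_{\psi_i'}$ generated by $\sqrt{t_i}\xi_\psi \oplus \sqrt{1-t_i}\xi_{\psi_i'}$, and projecting onto the first summand yields a bounded intertwiner $\pi_{\varphi_i} \to \pi_\psi$ whose image contains $\sqrt{t_i}\xi_\psi$ and is therefore dense (by cyclicity of $\xi_\psi$). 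Polar decomposition then realizes $\pi_\psi$ as a subrepresentation of each $\pi_{\varphi_i}$, hence as a common subrepresentation — contradicting disjointness of the GNS reps. For (1) $\Rightarrow$ (2) I would invoke the central disintegration of $(\cM_\varphi, \tau)$ over $\mathcal Z(\cM_\varphi) \cong L^\infty(\Ch{\Lambda}, \mu_\varphi)$, yielding $\pi_\varphi = \int^\oplus \pi_\psi \, d\mu_\varphi(\psi)$ with fibers being factor representations by \Cref{thm:Thoma}. The main technical input is that distinct characters give disjoint factor reps: a would-be quasi-equivalence between $\pi_{\psi_1}$ and $\pi_{\psi_2}$ provides a $*$-isomorphism $\cM_{\psi_1} \cong \cM_{\psi_2}$ intertwining the $\Lambda$-actions, and uniqueness of the normalized trace on a factor forces $\psi_1 = \psi_2$. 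Granting this, when $\mu_{\varphi_1} \perp \mu_{\varphi_2}$ the two direct integrals live over disjoint Borel subsets of $\Ch{\Lambda}$, and the two GNS reps are disjoint.

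For the ``furthermore'' part, set $\widetilde{\pi} = \pi_{\varphi_1} \oplus \pi_{\varphi_2}$ on $\widetilde{\cH} = \cH_{\varphi_1} \oplus \cH_{\varphi_2}$, with $\eta = \sqrt{t}\, \xi_{\varphi_1} \oplus \sqrt{1-t}\, \xi_{\varphi_2}$; a direct computation gives $\langle \widetilde{\pi}(g)\eta, \eta\rangle = \varphi(g)$, so GNS uniqueness identifies $(\pi_\varphi, \xi_\varphi)$ with the cyclic subrepresentation of $\widetilde{\pi}$ generated by $\eta$. By the equivalence (1) $\Leftrightarrow$ (2) just established, the components $\pi_{\varphi_i}$ are disjoint, so the coordinate projections $P_i$ onto $\cH_{\varphi_i}$ lie in $\widetilde{\pi}(\Lambda)'$; they therefore preserve the cyclic subspace of $\eta$, and since $P_i\eta$ is a nonzero scalar multiple of the cyclic vector $\xi_{\varphi_i}$, that subspace exhausts each $\cH_{\varphi_i}$ and hence equals $\widetilde{\cH}$. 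Relabeling the rescaled vectors $\sqrt{t}\, \xi_{\varphi_1}$ and $\sqrt{1-t}\, \xi_{\varphi_2}$ as $\xi_{\varphi_1}$ and $\xi_{\varphi_2}$ (in the notation of the statement) yields the claimed decomposition of $\xi_\varphi$ together with the stated matrix-coefficient identities. The hardest step is the disjointness of factor reps of distinct characters — the one place where the trace hypothesis, as opposed to mere positive-definiteness, is essential; the rest reduces to standard Choquet theory and GNS manipulations.
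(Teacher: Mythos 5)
Your proof is correct, and both the equivalence $(1)\Leftrightarrow(3)$ and the ``furthermore'' part run essentially parallel to the paper's. The genuine divergence is in $(1)\Leftrightarrow(2)$. The paper proves $(1)\Rightarrow(2)$ by contradiction: a common subrepresentation $\rho$ of the two GNS representations yields traces $\psi_i\le\varphi_i$ both realized on $\mathcal M_\rho$, and a common subordinate trace is produced very concretely by taking the pointwise minimum of the two a.e.-positive $L^2$ functions on the center of $\mathcal M_\rho$ representing $\psi_1$ and $\psi_2$; the converse $(2)\Rightarrow(1)$ is simply cited from the literature. You instead prove $(2)\Rightarrow(1)$ by hand (your polar-decomposition argument realizing $\pi_\psi$ inside each $\pi_{\varphi_i}$ is the standard proof of the cited lemma and is fine), and you prove $(1)\Rightarrow(2)$ by routing through $(3)$ and the central disintegration $\pi_{\varphi_i}=\int^{\oplus}\pi_\psi\,d\mu_{\varphi_i}(\psi)$, using uniqueness of the normal trace on a factor to see that distinct characters give disjoint factor representations. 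This buys a conceptually cleaner picture --- disjointness of GNS representations is read off from the central measures --- at the cost of invoking the full disintegration-of-traces machinery: the identification of $\mathrm{Z}(\mathcal M_{\varphi})$ with $L^\infty(\Ch{\Lambda},\mu_\varphi)$ with fibers the GNS representations of characters, and the fact that direct integrals of pairwise disjoint fibers over mutually singular measures are disjoint. The paper's minimum-of-two-functions argument avoids direct integrals entirely. One small point in your ``furthermore'' step: membership of $P_i$ in $\widetilde\pi(\Lambda)'$ does not by itself make $P_i$ preserve the cyclic subspace of $\eta$; you need that disjointness forces the commutant of $\widetilde\pi$ to be block-diagonal, so that the projection onto that cyclic subspace commutes with the $P_i$ --- worth saying explicitly, though the conclusion is correct.
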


While the statement is intuitive, its proof is technical, and we encourage the reader to skip it on first reading.

\begin{proof}[Proof of Proposition \ref{prop:disj traces and gns}]
We establish the equivalence of the three stated conditions by first showing $(1) \Leftrightarrow (2)$ and then $(1) \Leftrightarrow (3)$.

Let us show $(1) \Leftrightarrow (2)$. In one direction, assume that the two traces $\varphi_1$ and $\varphi_2$ are disjoint. Denote by $(\cM_{\varphi_i}, \varphi_i)$ the associated tracial von Neumann algebras for $i \in \{1,2\}$. If, arguing towards contradiction, the two GNS representations $\pi_{\varphi_1}$ and $\pi_{\varphi_2}$ were not disjoint, they would have a common subrepresentation $\rho$.
 Denote $\mathcal{M}_\rho = \rho(\Lambda)''$, so that $\mathcal{M}_\rho$ is a tracial von Neumann algebra. Fix some trace $\tau$ on $\mathcal{M}_\rho$.
There exist central projections $p_i \in \mathrm{Z}(\mathcal{M}_{\varphi_i})$ and $*$-isomorphisms $\theta_i :\mathcal{M}_{\varphi_i} p_i \to \mathcal M_{\rho}$ such that $\theta_i(\pi_{\varphi_i}(g)p_i) = \rho(g)$ for all $g \in \Lambda$ and  $i\in\{1,2\}$  \cite[Proposition 6.B.5]{BdlH}.
The functions $\psi_i(\cdot) = \varphi_i(\pi_{\varphi_i}(\cdot) p_i)$ are traces on $\Lambda$ satisfying $\psi_i \le \varphi_i$ for $ i\in \{1,2\}$ \cite[Lemma~11.C.2]{BdlH}. 
According to Theorem \ref{thm:thoma} there are traces $\tau_1$ and $\tau_2$ on the von Neumann algebra $\mathcal M_\rho $ such that  $\psi_i= \tau_i \circ \rho$ for $i \in \{1,2\}$.  We will arrive at a contradiction by showing that 
$\psi_1$ and $\psi_2$ are not disjoint.

Let $\mathcal A_\rho $ the center of $\mathcal M_\rho $. Consider the standard representation $L^2(\mathcal M_\rho , \tau)$.
There is a one-to-one correspondence between normal tracial states on $\mathcal M_\rho $ and positive elements of norm one in the subspace $L^2(\mathcal A_\rho , \tau) \subset L^2(\mathcal M_\rho , \tau)$ \cite[Proposition~7.3.9]{Popa}. 
As the von Neumann algebra $\mathcal A_\rho $ is abelian, there is a standard probability space $(X, \mu)$ such that $(\mathcal A_\rho , \tau)$ is tracially isomorphic to $(L^\infty(X, \mu), \mu)$. The standard representation $L^2(\mathcal A_\rho , \tau)$ is given by the action of $L^\infty(X, \mu)$ on $L^2(X, \mu)$  \cite[Corollary 3.2.2]{Popa}. Hence $\tau_1$ and $\tau_2$ correspond to  positive norm one functions $f_1,f_2 \in L^2(X,\mu)$. 
The functions $f_1$ and $f_2$ are $\mu$-almost surely positive  by the faithfulness of $\tau_1$ and $\tau_2$.
The $\mu$-almost surely positive function $f=\min \{f_1,f_2\}$ corresponds to a non-zero positive, tracial normal functional $\tau'$ on $\mathcal M_\rho $. The normalized pullback $\psi=\frac{\tau' \circ \rho}{\tau'(1)} $ is a trace satisfying $\psi \leq \psi_i$ for $i \in \{1,2\}$. We arrive at a contradiction.

For the opposite direction, if the two GNS representations $\pi_{\varphi_1}$ and $\pi_{\varphi_2}$ are disjoint, then the two traces $\varphi_1$ and $\varphi_2$ must also be disjoint by 
\cite[Lemmas 1.B.11 and 11.C.2]{BdlH}.

The equivalence $(1) \Leftrightarrow (3)$ follows from the fact the Bochner transform sets up an affine isomorphism of convex sets between $\Tr{\Lambda}$ and $\mathrm{Prob}(\Ch{\Lambda})$, via Choquet theory. Under the Bochner transform, disjoint traces $\varphi_1$ and $\varphi_2$ correspond to measures $\mu_{\varphi_1}$ and $\mu_{\varphi_2}$ with the following property: there exists no non-zero positive measure $\nu$ with $\nu(A) \leq \mu_{\varphi_i}(A)$ 
for all Borel sets $A$ and $i \in \{1,2\}$. This property characterizes mutually singular measures.  

Finally, consider a non-trivial convex combination $\varphi = t\varphi_1 + (1-t)\varphi_2$ where $\varphi,\varphi_1,\varphi_2\in\Tr{\Lambda}$, $t \in \left(0,1\right)$ and the two traces $\varphi_1$ and $\varphi_2$ are disjoint. Let $(\mathcal H_{\varphi},\pi_\varphi,\xi_\varphi)$ be the GNS-triple associated to $\varphi$. According to \cite[Lemma~11.C.2]{BdlH} there is an operator $T$ in the center of the von Neumann algebra $\mathcal M_\varphi$ with $0 \le T \le 1$ such that the two vectors $\xi_{\varphi_1} = T\xi_\varphi$ and $\xi_{\varphi_2} = (1-T)\xi_{\varphi}$ are cyclic vectors for  GNS representations of $\varphi_1$ and $\varphi_2$. As the two traces $\varphi_1$ and $\varphi_2$ are disjoint, their GNS representations are disjoint by the above. Denote the corresponding Hilbert subspaces of $\mathcal{H}_\varphi$ by $\mathcal H_{\varphi_1}$ and $\mathcal H_{\varphi_2}$. We claim that $\mathcal H_{\varphi_1} \perp \mathcal H_{\varphi_2}$. Indeed, since $T$ is central, the orthogonal projection $p_i$ onto $\mathcal H_{\varphi_i}$ is central. Therefore each projection $p_i$ intertwines the representations $\mathcal{H}_{\varphi_1}$ and $\mathcal{H}_{\varphi_2}$. But $\mathcal{H}_{\varphi_1}$ and $\mathcal{H}_{\varphi_2}$ are disjoint so that $\mathcal{H}_{\varphi_1} \subseteq \ker{p_2}$ and $\mathcal{H}_{\varphi_2} \subseteq \ker{p_1}$, establishing the claim. Since the cyclic vector $\xi_\varphi$ satisfies $\xi_\varphi \in \mathcal{H}_{\varphi_1} + \mathcal{H}_{\varphi_2}$ we get $\mathcal{H}_\varphi = \mathcal{H}_{\varphi_1} \oplus \mathcal{H}_{\varphi_2}$ as desired.
\end{proof}

\subsection{Mixing and weakly mixing traces}\label{subsec:2.2}

Let $\varphi \in \Tr{\Lambda}$ be a trace. It is called \emph{finite-dimensional} if its GNS representation is finite-dimensional, and \emph{infinite-dimensional} otherwise. The trace $\varphi$ is called  \emph{von Neumann amenable} if the von Neumann algebra $\cM_\varphi$ is amenable; see \cite[Chapter 10]{Popa}. In  case  $\varphi$ is a character, its amenability is equivalent to $\cM_\varphi$ being isomorphic to the hyperfinite $\text{II}_1$-factor $\mathcal R$ \cite[Section 1.6]{Popa}.

The trace $\varphi \in \Tr{\Lambda}$ is called \emph{weakly mixing} if it satisfies one the following equivalent conditions:
\begin{itemize}
    \item the GNS representation $\pi_\varphi$ is  weakly mixing, in the sense that $\pi_\varphi$ admits no finite-dimensional subrepresentations,
    \item there is no finite-dimensional trace $\psi \in \Tr{\Lambda}$ with $\psi \le \varphi$, or
    \item for every $\varepsilon > 0$ and every finite subset $A \subset \Lambda$  there exists an element $g \in \Lambda$ such that $|\varphi(gh)| \leq \varepsilon$ for every $h \in A$.
\end{itemize}
Given a trace $\varphi \in \Tr{\Lambda}$, we can uniquely decompose its Bochner transform $\mu_{\varphi}$ as a convex combination of an atomic measure supported on finite-dimensional characters and another mutually singular measure. By  Proposition \ref{prop:disj traces and gns} this will correspond to a convex  decomposition $\varphi=t\varphi_{\mathrm{wm}}+(1-t)\varphi_{\mathrm{nwm}}$ of $\varphi$ into disjoint weakly mixing and non-weakly mixing traces. The trace $\varphi_{\mathrm{nwm}}$ is a convex combination of countably many finite-dimensional characters. Similarly,   a unitary representation can be decomposed as a disjoint sum $\pi=\pi_{\mathrm{wm}} \oplus \pi_\mathrm{nwm}$ of its weakly mixing and non-weakly mixing parts, where $\pi_\mathrm{nwm}$ is a direct sum of finite-dimensional unitary representations. The decomposition of the trace yields the corresponding decomposition of its GNS representation, in alignment with Proposition \ref{prop:disj traces and gns}. 

\begin{remark}
Generally speaking, the Bochner transform of a weakly mixing trace may be supported on finite-dimensional characters, but it cannot have any atoms at such characters.
\end{remark}

The trace $\varphi \in \Tr{\Lambda}$ is called \emph{mixing} if $\lim_n \varphi(g_n) = 0 $ for every sequence of pairwise distinct elements $g_n \in \Lambda$. The trace $\varphi$ is mixing if and only if its GNS representation is mixing, in the sense that all of its matrix coefficients decay to zero at infinity on $\Lambda$.

\begin{lemma}
\label{lemma:unitary decomposes into mixing and non-mixing}
Let $\pi$ be a unitary representation of the group $\Lambda$. Then $\pi$ uniquely decomposes as a direct sum $\pi = \pi_\mathrm{mix} \oplus \pi_\mathrm{nm}$, so that $\pi_\mathrm{mix}$ is mixing and no subrepresentation of $\pi_\mathrm{nm}$ is mixing.
\end{lemma}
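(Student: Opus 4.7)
The plan is to construct $\pi_\mathrm{mix}$ as the \emph{maximal} mixing subrepresentation, and to take $\pi_\mathrm{nm}$ to be its orthogonal complement. Let $\cH$ denote the representation space of $\pi$, and consider the collection $\mathcal{F}$ of all closed $\pi$-invariant subspaces $V \subseteq \cH$ on which $\pi|_V$ is mixing. Define $\cH_\mathrm{mix} = \overline{\operatorname{span}} \bigcup_{V \in \mathcal{F}} V$ and $\cH_\mathrm{nm} = \cH_\mathrm{mix}^\perp$. Both subspaces are $\pi$-invariant (the second because the first is). The claim will be that $\pi|_{\cH_\mathrm{mix}}$ is mixing and no subrepresentation of $\pi|_{\cH_\mathrm{nm}}$ is mixing, and that this decomposition is forced.

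The crucial technical step, which I expect to be the main obstacle, is verifying that $\pi|_{\cH_\mathrm{mix}}$ is itself mixing. The key observation that unlocks this is the following \emph{one-sided} strengthening: if $V \subseteq \cH$ is a closed $\pi$-invariant subspace on which $\pi$ acts mixingly, then for any $\xi \in V$ and \emph{any} $\eta \in \cH$, the matrix coefficient $\langle \pi(g)\xi,\eta\rangle$ decays to zero along any sequence of pairwise distinct elements in $\Lambda$. This is because $\pi(g)\xi \in V$ by invariance, so $\langle \pi(g)\xi,\eta\rangle = \langle \pi(g)\xi, P_V \eta\rangle$, which decays by mixing of $\pi|_V$. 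Using this, for any finite sum $\xi' = \sum_{i=1}^k \xi_i$ with $\xi_i \in V_i \in \mathcal{F}$ and any $\eta' \in \cH$, we get $\langle \pi(g)\xi',\eta'\rangle \to 0$; an $\varepsilon/3$ argument approximating arbitrary $\xi \in \cH_\mathrm{mix}$ by such finite sums then shows $\pi|_{\cH_\mathrm{mix}}$ is mixing.

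Next, no subrepresentation of $\pi|_{\cH_\mathrm{nm}}$ can be mixing: if $W \subseteq \cH_\mathrm{nm}$ is a closed invariant subspace on which $\pi$ acts mixingly, then $W \in \mathcal{F}$, so $W \subseteq \cH_\mathrm{mix} \cap \cH_\mathrm{nm} = \{0\}$.

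Finally, for uniqueness, suppose $\pi = \sigma_1 \oplus \sigma_2$ is another such decomposition, realized by an invariant splitting $\cH = \cH_1 \oplus \cH_2$. Since $\sigma_1$ is mixing, $\cH_1 \in \mathcal{F}$ and so $\cH_1 \subseteq \cH_\mathrm{mix}$. Conversely, let $P_2$ denote the orthogonal projection onto $\cH_2$; since $P_2$ commutes with $\pi$, the image $P_2(\cH_\mathrm{mix})$ is $\pi$-invariant, and using the one-sided mixing property established above we see that $P_2(\cH_\mathrm{mix})$ is a mixing subrepresentation of $\sigma_2$, which by hypothesis forces $P_2(\cH_\mathrm{mix}) = \{0\}$, i.e.\ $\cH_\mathrm{mix} \subseteq \cH_1$. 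This yields $\cH_1 = \cH_\mathrm{mix}$ and hence $\cH_2 = \cH_\mathrm{nm}$.
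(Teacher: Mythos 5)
Your proof is correct, and it takes a somewhat different route from the paper's. The paper defines $\mathcal{H}_\mathrm{mix}$ pointwise, as the set of vectors $v$ for which $\langle\pi(g_n)v,v\rangle\to 0$ along every sequence of pairwise distinct $g_n$, asserts (leaving the verification to the reader) that this set is a closed invariant subspace, and does not spell out uniqueness. Your construction of $\mathcal{H}_\mathrm{mix}$ as the closed span of all mixing invariant subspaces is the ``maximality'' description of the same object, and your one-sided decay observation --- $\langle\pi(g)\xi,\eta\rangle=\langle\pi(g)\xi,P_V\eta\rangle$ for $\xi$ in an invariant mixing subspace $V$ and arbitrary $\eta$ --- is precisely the point that makes the whole argument go through. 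In the paper's pointwise formulation the analogous step (upgrading diagonal decay $\langle\pi(g_n)v,v\rangle\to 0$ to decay of $\langle\pi(g_n)v,w\rangle$ for arbitrary $w$, which is what is needed to see that the set of mixing vectors is a linear subspace) requires a genuine argument, e.g.\ extracting a weak limit of $\pi(g_n)v$ and showing it vanishes; your subspace formulation sidesteps this entirely, and moreover delivers the uniqueness statement cleanly. One small point to tighten: in the uniqueness step, $P_2(\mathcal{H}_\mathrm{mix})$ need not be closed, so you should pass to its closure (which is still $\pi$-invariant and, by the same approximation argument you already use, still mixing) before invoking the hypothesis that $\sigma_2$ has no nonzero mixing subrepresentation; this does not affect the conclusion $\mathcal{H}_\mathrm{mix}\subseteq\mathcal{H}_1$.
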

\begin{proof}
Let $\mathcal{H}$ be the underlying Hilbert space of the representation $\pi$. Denote 
$$ \mathcal{H}_\mathrm{mix} = \{v \in \mathcal{H} \: :\: \lim_n \left<\pi(g)v,v\right> = 0 \quad \forall \text{$g_n\in \Lambda$ pairwise distinct} \}. $$
We leave to the reader the straightforward verification that $\mathcal{H}_\mathrm{mix}$ is a $\pi(\Lambda)$-invariant Hilbert subspace of $\mathcal{H}$. Take $\pi_{\mathrm{mix}}$ to be the subrepresentation  corresponding to this subspace, and $\pi_{\mathrm{nm}}$  the subrepresentation corresponding to the orthogonal complement $\mathcal{H}_\mathrm{mix}^\perp$.
\end{proof}

\begin{remark}
Building upon Lemma \ref{lemma:unitary decomposes into mixing and non-mixing}, in can be further shown that any trace $\varphi \in \Tr{\Lambda}$ has a unique decomposition as a convex combination of its mixing and non-mixing parts. We will not need this fact.
\end{remark}

\subsection{Operations with traces}\label{subsec:2.3}
Beyond taking convex combinations, one can also take tensor products and complex conjugates of traces.
If $\varphi \in \Tr{\Lambda}$ then the \emph{conjugate trace} is given by $\overline{\varphi}(g) = \overline{\varphi(g)} = \varphi(g^{-1})$ for all $g \in G$. 
If $\varphi_1, \varphi_2 \in \Tr{\Lambda}$ then the \emph{tensor product trace} is given by $\varphi_1\otimes \varphi_2(g) = \varphi_1(g) \cdot \varphi_2(g)$ for all $g \in G$. The conjugate and tensor product traces are indeed traces on $\Lambda$.
It is straightforward to see that the contragredient representation $\overline{\pi}_\varphi$ induces the conjugate trace, and that the tensor product representation  $\pi_{\varphi_1} \otimes \pi_{\varphi_2} $ induces the tensor product trace.  In particular, if $\varphi$ is a trace, then so is $| \varphi|^2 = \varphi \otimes \overline \varphi$.

\begin{definition}\label{def:ker_of_trace}
Let $\varphi$ be a trace on the group $\Lambda$.
 The \emph{kernel} of $\varphi$ is
        \[
        \ker\varphi=\left\{ g\in \Lambda \: : \: \varphi(g)=1\right\}.
        \]
 The \emph{projective kernel} of $\varphi$ is 
        \[
        \Pker\varphi=\ker |\varphi|^2 = \left\{ g\in \Lambda \: : \: |\varphi(g)|=1\right\}.
        \]
The trace $\varphi$ is called \emph{faithful} if $\ker\varphi=\{e\}$. It  is called \emph{trivial} if $\ker \varphi = \Lambda$, so that $\varphi \equiv 1$.
\end{definition}

The kernel and projective kernel of any trace $\varphi \in \Tr \Lambda$ are normal subgroups of $\Lambda$. The traces  $\varphi$ and $|\varphi|^2$ respectively descend to faithful traces on the quotient groups $\Lambda/\ker\varphi$ and $\Lambda / \Pker \varphi$. For all this see \cite[Lemma 12.A.1]{BdlH}.

\begin{lemma}[Schur's lemma for characters]
\label{lemma:schur}
Let $\varphi \in \Ch{\Lambda}$ be a character. The restriction of $\varphi$ to the center $\mathrm{Z}(\Lambda)$ is a multiplicative character $\chi = \varphi_{|\mathrm{Z}(\Lambda)}: \mathrm Z(\Lambda) \to S^1$. The restriction of the unitary GNS  representation $\pi_\varphi$ to the center is given by $\pi_\varphi = \chi$.
\end{lemma}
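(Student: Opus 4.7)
The plan is to deduce both claims directly from Theorem \ref{thm:Thoma}, which characterizes a character as a trace whose associated von Neumann algebra $\mathcal{M}_\varphi = \pi_\varphi(\Lambda)''$ is a factor, i.e. has trivial center $\mathrm{Z}(\mathcal{M}_\varphi) = \mathbb{C}\cdot 1$. The key observation is that any central group element is sent by $\pi_\varphi$ into the center of $\mathcal{M}_\varphi$.

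First I would fix $z \in \mathrm{Z}(\Lambda)$ and argue that $\pi_\varphi(z)$ commutes with every $\pi_\varphi(g)$ for $g \in \Lambda$, hence by strong-operator continuity with the entire von Neumann algebra $\mathcal{M}_\varphi$. Since $\pi_\varphi(z) \in \mathcal{M}_\varphi$ as well, we get $\pi_\varphi(z) \in \mathrm{Z}(\mathcal{M}_\varphi) = \mathbb{C}\cdot 1$. Thus there exists a scalar $\chi(z) \in \mathbb{C}$ with $\pi_\varphi(z) = \chi(z)\cdot 1$, and because $\pi_\varphi(z)$ is unitary we have $|\chi(z)|=1$, i.e. $\chi(z) \in S^1$. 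Multiplicativity $\chi(zw) = \chi(z)\chi(w)$ follows immediately from the fact that $\pi_\varphi$ is a group homomorphism.

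Next I would compute the restriction of $\varphi$: using the GNS formula,
\[
\varphi(z) = \langle \pi_\varphi(z)\xi_\varphi, \xi_\varphi\rangle = \chi(z)\langle \xi_\varphi, \xi_\varphi\rangle = \chi(z),
\]
since $\xi_\varphi$ is a unit vector. This gives $\varphi_{|\mathrm{Z}(\Lambda)} = \chi$. The final statement in the lemma is then simply the rephrasing already obtained: the restriction $\pi_\varphi|_{\mathrm{Z}(\Lambda)}$ acts by the scalar $\chi(z)$ on the whole Hilbert space $\mathcal{H}_\varphi$, i.e. $\pi_\varphi(z) = \chi(z)\cdot\mathrm{id}_{\mathcal{H}_\varphi}$ for every $z \in \mathrm{Z}(\Lambda)$.

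There is no real obstacle here; the only subtlety is making sure one invokes the correct form of Thoma's theorem to identify characters with factorial GNS representations, and then applying the standard factor argument that a normal element commuting with all of $\mathcal{M}_\varphi$ must be scalar. No further input beyond Theorem \ref{thm:Thoma} and the GNS formula is needed.
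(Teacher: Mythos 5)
Your proof is correct and is the standard argument: by Theorem \ref{thm:Thoma} the factoriality of $\mathcal{M}_\varphi$ forces $\pi_\varphi(z)\in \mathrm{Z}(\mathcal{M}_\varphi)=\mathbb{C}\cdot 1$ for central $z$ (since $\pi_\varphi(z)$ lies in both $\pi_\varphi(\Lambda)''$ and, by the bicommutant theorem, in $\pi_\varphi(\Lambda)'$), and the rest follows from unitarity and the GNS formula. The paper itself gives no internal proof and simply cites an external reference, so your self-contained argument is exactly what is expected here.
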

\begin{proof}
See e.g. \cite[Lemma 7.1]{levit2024characters}.
\end{proof}

In particular, this implies that any character $\varphi \in \Ch{\Lambda}$ satisfies $\mathrm{Z}(\Lambda) \le \Pker \varphi$.

One additional construction we need is induction of traces. We follow the conventions of \cite{levit2024characters}; see \cite{kaniuth2006induced} for more information. 
Let $\Lambda_0$ be a subgroup of $\Lambda$. Recall that if $\varphi$ is a positive definite function on $\Lambda_0$, then its \emph{trivial extension} to $\Lambda$ is defined by
$$ \widetilde{\varphi} = \begin{cases}
    \varphi(g) & g\in\Lambda_0 \\
    0 & g\not\in\Lambda_0.
\end{cases}$$
The trivial extension $\widetilde{\varphi}$ is a positive definite function on $\Lambda$. In fact $\widetilde{\varphi}$ is the matrix coefficient associated to the induced unitary representation $\Ind_{\Lambda_0}^\Lambda \pi_\varphi$ acting on the Hilbert space $\cH_\varphi \otimes \ell^2(\Lambda/\Lambda_0)$ with cyclic vector $\xi_\varphi\otimes \delta_{\Lambda_0}$.

\begin{definition}
Given a finite-index subgroup  $\Lambda_0 \leq \Lambda$   and a trace $\tau \in \Tr{\Lambda_0}$,  we define the \emph{induced trace} $\Ind_{\Lambda_0}^\Lambda \tau \in \Tr{\Lambda}$ to be the function
\begin{equation}
\label{eq:formula for induction}
\Ind_{\Lambda_0}^\Lambda \tau = \frac{1}{[\Lambda:\Lambda_0]}\sum_{g \in \Lambda/\Lambda_0} \widetilde\tau^g.    
\end{equation}
\end{definition}

The fact that the induced trace $\Ind_{\Lambda_0}^\Lambda \tau$ is indeed 
normalized, positive definite and conjugation invariant follows directly from Equation (\ref{eq:formula for induction}).

\begin{proposition}
\label{prop:prop_of_Ind_res}
Let $\Lambda_0 \leq \Lambda$ be a finite-index subgroup. Let  $\tau \in \Tr{\Lambda_0}$ and $ \varphi \in \Tr{\Lambda}$ be traces.   
\begin{enumerate}
    \item If $\tau$ is weakly mixing then so is $\Ind_{\Lambda_0}^\Lambda \tau$.
    \item If $\varphi$ is weakly mixing then so is $\varphi|_{\Lambda_0}$.
\end{enumerate}
\end{proposition}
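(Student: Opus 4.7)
The plan is to characterize weak mixing of a trace $\varphi$ via the absence of nonzero finite-dimensional subrepresentations of its GNS representation $\pi_\varphi$, as recalled in \S\ref{subsec:2.2}. Both parts can then be proved by contrapositive.

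For part (2), suppose $\varphi|_{\Lambda_0}$ fails to be weakly mixing. Since $\xi_\varphi$ realizes $\varphi|_{\Lambda_0}$ as a matrix coefficient of $\pi_\varphi|_{\Lambda_0}$, the GNS representation of $\varphi|_{\Lambda_0}$ is isomorphic to the cyclic $\Lambda_0$-subrepresentation of $\pi_\varphi|_{\Lambda_0}$ generated by $\xi_\varphi$, so $\pi_\varphi|_{\Lambda_0}$ itself contains a nonzero finite-dimensional $\Lambda_0$-invariant subspace $V$. Because $[\Lambda:\Lambda_0]$ is finite, the sum
$$W = \sum_{g \Lambda_0 \in \Lambda/\Lambda_0} \pi_\varphi(g) V \subseteq \cH_\varphi$$
is a finite-dimensional $\Lambda$-invariant subspace of $\cH_\varphi$. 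This yields a finite-dimensional subrepresentation of $\pi_\varphi$, contradicting the weak mixing of $\varphi$.

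For part (1), I would use that each conjugate $\widetilde{\tau}^{g_i}$ appearing in \eqref{eq:formula for induction} is a matrix coefficient of the induced unitary representation $\Ind_{\Lambda_0}^\Lambda \pi_\tau$, as recalled just before the definition of the induced trace in \S\ref{subsec:2.3}. It follows that $\Ind_{\Lambda_0}^\Lambda \tau$ is a matrix coefficient of the finite amplification $(\Ind_{\Lambda_0}^\Lambda \pi_\tau)^{\oplus n}$, where $n = [\Lambda:\Lambda_0]$, so its GNS representation embeds as a subrepresentation of this direct sum. If $\Ind_{\Lambda_0}^\Lambda \tau$ were not weakly mixing, its GNS representation, and therefore $\Ind_{\Lambda_0}^\Lambda \pi_\tau$ itself, would contain a finite-dimensional subrepresentation $V$. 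Frobenius reciprocity for finite-index subgroups then provides a nonzero $\Lambda_0$-equivariant map $V|_{\Lambda_0} \to \pi_\tau$, whose finite-dimensional image is a nonzero $\Lambda_0$-invariant subspace of $\cH_\tau$, contradicting the weak mixing of $\tau$.

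Neither implication presents a serious obstacle. The main bookkeeping point is the identification of the GNS representation of an induced trace with a subrepresentation of a finite direct sum of copies of the corresponding induced unitary representation, so that the presence of finite-dimensional subrepresentations can be transferred between $\Lambda$-level and $\Lambda_0$-level representations using the averaging trick in part (2) and Frobenius reciprocity in part (1).
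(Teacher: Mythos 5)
Your proposal is correct and follows essentially the same route as the paper: the paper's proof consists precisely of the two observations that the GNS representation of an induced (resp.\ restricted) trace embeds into a multiple of the induced (resp.\ the restricted) GNS representation, and then appeals to the corresponding facts for weakly mixing unitary representations, whose verification is left to the reader. You have simply supplied those omitted verifications — the coset-averaging argument for restriction and Frobenius reciprocity for induction — and both are carried out correctly.
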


\begin{proof}
Note that the GNS representation of an induced trace is a subrepresentation of a multiple of the induced GNS representation. Similarly, the GNS representation of a restricted trace is a subrepresentation of the restricted GNS representation. Having these facts at hand, the statement follows from the analogous statements regarding weakly mixing representations, whose verification we leave to the reader.
\end{proof}

\subsection{Generalized Bekka lemma}
The following lemma is a very helpful tool to prove that a trace vanishes on a given element. This argument will play an important role in \S\ref{sec:7} below. It is also used in \S \ref{sec:4}. We quote it in the following form from \cite[Lemma 5.10]{dogon2024characters}.

\begin{lemma}[Generalized Bekka lemma]
\label{lemma:generalized bekka lemma}
Let $\Lambda$ be a countable group and $\varphi \in \mathrm{Tr}(\Lambda)$ a trace. Fix an element $g \in \Lambda$. If there exist a sequence of elements $x_n \in \Lambda$ such that $\lim_{n\to\infty} \varphi([g, x_n]^{-1} [g, x_m]) = 0$ for every $m \in \mathbb{N}$ then $\varphi(g) = 0$.
\end{lemma}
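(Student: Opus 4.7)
The strategy is to pass to the GNS representation of $\varphi$ and exploit the conjugation-invariance of the trace to force a matrix coefficient that unconditionally computes $\overline{\varphi(g)}$ to tend to zero. Let $(\mathcal{H}_\varphi, \pi_\varphi, \xi_\varphi)$ be the GNS triple of $\varphi$. Define the unit vectors
\[ \xi_n = \pi_\varphi([g,x_n])\xi_\varphi \in \mathcal{H}_\varphi. \]
Their pairwise inner products directly encode the hypothesis, since
\[ \langle \xi_m, \xi_n\rangle = \langle \pi_\varphi([g,x_n]^{-1}[g,x_m])\xi_\varphi,\xi_\varphi\rangle = \varphi([g,x_n]^{-1}[g,x_m]) \]
tends to $0$ as $n\to\infty$ for every fixed $m$.

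First I would argue that $\xi_n \rightharpoonup 0$ weakly in $\mathcal{H}_\varphi$. Let $K$ denote the closed linear span of $\{\xi_m : m \in \mathbb{N}\}$. Linearity and the hypothesis yield $\langle v, \xi_n\rangle \to 0$ for every $v$ in the algebraic span of the $\xi_m$; since $\|\xi_n\|=1$ uniformly, a standard $\varepsilon/3$ approximation extends this to all of $K$, and for $v \in K^\perp$ the pairing vanishes identically. Because $\pi_\varphi(g^{-1})$ is bounded, weak convergence is preserved, so
\[ \langle \pi_\varphi(g^{-1})\xi_n, \xi_\varphi\rangle \longrightarrow 0. \]

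To conclude, a direct calculation simplifies the left hand side using the definition of $\xi_n$ together with the conjugation-invariance of the trace:
\[ \langle \pi_\varphi(g^{-1})\xi_n, \xi_\varphi\rangle = \langle \pi_\varphi(x_n g^{-1}x_n^{-1})\xi_\varphi,\xi_\varphi\rangle = \varphi(x_n g^{-1}x_n^{-1}) = \varphi(g^{-1}) = \overline{\varphi(g)}. \]
The right hand side is constant in $n$ yet the left hand side tends to $0$, forcing $\overline{\varphi(g)} = 0$, i.e.\ $\varphi(g)=0$. I do not expect a substantive obstacle: the entire argument is a short Hilbert-space manipulation, and the mildly delicate weak-convergence step could alternatively be bypassed by extracting a weakly convergent subsequence and invoking Mazur's lemma to identify the limit.
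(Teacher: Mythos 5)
Your argument is correct and is essentially the standard proof of Bekka's lemma; the paper itself does not prove this statement but imports it from \cite{dogon2024characters}, where the same GNS/weak-convergence argument is used, so there is nothing genuinely different to compare. One cosmetic point: your final identity $g^{-1}[g,x_n]=x_ng^{-1}x_n^{-1}$ presupposes the convention $[g,h]=ghg^{-1}h^{-1}$, whereas this paper (and the cited source) use $[g,h]=g^{-1}h^{-1}gh$, under which one should instead pair $\xi_n$ against $\pi_\varphi(g^{-1})\xi_\varphi$, i.e.\ use $g[g,x_n]=x_n^{-1}gx_n$ and conclude $\varphi(g)=\langle\pi_\varphi(g)\xi_n,\xi_\varphi\rangle\to 0$ directly; since the two conventions are exchanged by $g\mapsto g^{-1}$, $x_n\mapsto x_n^{-1}$, the lemma and your proof survive either choice. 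The closing appeal to Mazur's lemma is unnecessary: your direct $\varepsilon/3$ argument already establishes the weak convergence.
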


Peterson and Thom used a variant of this idea in \cite[Lemma 2.3]{PetersonThom}. As was already mentioned, we use the formulation and proof from \cite{dogon2024characters}, which generalizes the idea of Bekka in \cite[Lemma~16]{BekkaCharRig}.

\section{Arithmetic lattices}\label{sec:3}

We introduce arithmetic (and $S$-arithmetic) groups, following the  terminology of Margulis in \cite[\S I.3.1]{Margulis}. 

Let $F$ be a global field. We denote by $\mathscr{R}$ the set of all equivalence classes of  valuations on the field $F$, and by $\mathscr{R}_\infty \subset \mathscr{R}$ the subset of Archimedean ones. For each  $v \in \mathscr{R}$ let $F_v$ be  the local field obtained by completing $F$ with respect to $v$.

Fix a finite set  of valuations  $S \subset \mathscr{R}$ satisfying $\mathscr{R}_\infty \subset S$. An element $x \in F$ is called \emph{$S$-integral} if $\abs{x}_v \leq 1$ for every $v \in \mathscr{R}\backslash S$. We denote by $F(S)$ the ring of $S$-integral elements of $F$. 

Let $\mathbf{H}$ be an algebraic $F$-group, equipped with an arbitrary $F$-embedding into some matrix group $\mathbf{GL}_m$ for $m \in \N$. The group of $F(S)$-points of $\mathbf{H}$ with respect to this embedding is $\mathbf{H}(F(S)) = \mathbf{H}(F) \cap \mathbf{GL}_m(F(S))$. More generally, the \emph{(principal) congruence subgroup} associated to a given non-zero ideal $\mathcal{I} \lhd F(S)$ is the subgroup of $\mathbf{H}(F(S))$ consisting of elements congruent to identity matrix $\mathrm{Id}_m$ modulo $\mathcal{I}$. It is denoted $\mathbf{H}(\mathcal{I})$. The group $\mathbf{H}(F(S))$ as well as other congruence subgroups depend on the chosen embedding only up to commensurability \cite[Theorem~I.3.1.1.iv]{Margulis}. 
This leads  to the following definition. 

\begin{definition}
\label{def:arithmetic subgroup}
    A subgroup of $\mathbf{H}(F)$ is called \emph{$S$-arithmetic} if it is commensurable to a group of the form $\mathbf{H}(F(S))$ given by some $F$-embedding of $\mathbf{H}$ into $\mathbf{GL}_m$.
\end{definition}
\begin{definition}
A \textit{semisimple group} is a group of the form $H=\prod_{i=1}^n \mathbf{H}_i(k_i)$, where each $k_i$ is a local field and each $\mathbf{H}_i$ is a semisimple algebraic $k_i$-group. The \textit{rank} of a semisimple group is  $\mathrm{rank}(H) = \sum_{i=1}^n \text{rank}_{k_i}(\mathbf{H}_i)$. 
\end{definition}
Apriori the local fields $k_i$ can be of different characteristics, but in that case $H$ will not admit irreducible lattices. 
It is a classical result due to Borel and Harish-Chandra that arithmetic subgroups are lattices \cite[Theorem~3.1.2]{Margulis}.

\begin{theorem}[Borel--Harish-Chandra]
    Assume that the algebraic $F$-group $\mathbf{H}$ is reductive and has no $F$-rational characters. Then any $S$-arithmetic subgroup of $\mathbf{H}(F)$ is a lattice in the semisimple group $\prod_{v \in S} \mathbf{H}(F_v)$. An $S$-arithmetic lattice is uniform if and only if the group $\mathbf{H}$ is $F$-anisotropic.
\end{theorem}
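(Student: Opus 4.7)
The plan is to establish the two assertions separately: the lattice property via reduction theory, and the uniform/non-uniform dichotomy via Mahler's compactness criterion combined with structure theory of reductive groups. As a preliminary step, I would apply restriction of scalars to replace $\mathbf{H}/F$ by $\mathrm{Res}_{F/F_0}\mathbf{H}$, where $F_0$ is the prime field of $F$ (so $F_0 = \mathbb{Q}$ or $F_0 = \mathbb{F}_p(t)$). This preserves reductivity and the absence of rational characters, and transports $S$-arithmetic subgroups and the ambient product $\prod_{v \in S}\mathbf{H}(F_v)$ to the analogous objects over $F_0$, thereby reducing to the classical setting.

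For the lattice property, I would construct a Siegel fundamental set $\mathfrak{S}\subset\prod_{v\in S}\mathbf{H}(F_v)$ adapted to a minimal parabolic $F$-subgroup $\mathbf{P} = \mathbf{M}\mathbf{A}\mathbf{N}$, where $\mathbf{A}$ is a maximal $F$-split torus and $\mathbf{N}$ is the unipotent radical. Classically, $\mathfrak{S}$ is of the form $\omega\cdot A_t\cdot K$, where $\omega$ is a compact set capturing the Levi--unipotent directions at archimedean places together with the full non-archimedean factors of $S$, the set $A_t$ is a shifted Weyl chamber in $\mathbf{A}$, and $K$ is a maximal compact subgroup; in positive characteristic one uses Bruhat--Tits buildings at the non-archimedean places following Harder. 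Reduction theory then produces a finite subset $C\subset\mathbf{H}(F)$ with $\mathbf{H}(F(S))\cdot C\cdot\mathfrak{S}=\prod_{v\in S}\mathbf{H}(F_v)$. The heart of the matter is the finite-volume estimate for $\mathfrak{S}$: integration of the Haar density against the conjugation Jacobian over $A_t\cdot\mathbf{N}$ yields a sum of exponentials in the simple roots, whose convergence over the truncated chamber is exactly ensured by the absence of $F$-rational characters on $\mathbf{H}$, since any such character would otherwise allow a noncompact direction of the center of a Levi to escape and destroy integrability.

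For the compactness criterion, I would invoke Mahler's criterion in the form due to Godement: the quotient $\mathbf{H}(F(S))\backslash\prod_{v\in S}\mathbf{H}(F_v)$ is compact if and only if $\mathbf{H}(F(S))$ contains no non-trivial unipotent element. Combined with the Borel--Tits structure theorem -- which asserts that a reductive $F$-group admits a non-trivial $F$-unipotent element if and only if it is $F$-isotropic, if and only if it contains a proper parabolic $F$-subgroup -- this yields the equivalence between non-uniformity and $F$-isotropy. The main obstacle in this program will be the Siegel volume convergence: tracking the precise way in which the no-rational-characters hypothesis controls the root-system combinatorics over the truncated Weyl chamber is delicate, and passage to positive characteristic requires importing Harder's reduction theory on Bruhat--Tits buildings, which introduces its own technicalities around the correct definition of Siegel sets at non-archimedean places.
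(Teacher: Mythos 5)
The paper does not prove this statement at all: it is quoted as a classical theorem with a citation to Margulis \cite[Theorem~3.1.2]{Margulis} (the result of Borel--Harish-Chandra in characteristic zero and of Harder--Behr over function fields), so there is no internal proof to compare against. Your outline is the standard route to that classical result -- restriction of scalars to the prime field, Siegel-set reduction theory for finiteness of covolume, and the Godement/Mahler compactness criterion combined with Borel--Tits for the uniformity dichotomy -- and as a high-level plan it is correct.

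Two caveats on the execution. First, restriction of scalars $\mathrm{Res}_{F/F_0}$ does not by itself reduce the $S$-arithmetic case to the classical arithmetic one: the non-archimedean places of $S$ survive the descent to $F_0$, so you still need either the adelic formulation (finite volume of $\mathbf{H}(F)\backslash\mathbf{H}(\mathbb{A})$ plus the fibration over the compact open subgroups at places outside $S$) or genuinely $S$-adapted reduction theory with Bruhat--Tits buildings at the finite places; you gesture at both but should pick one and be consistent. Second, the two steps you flag as ``delicate'' -- the convergence of the Haar integral over the truncated Siegel set (where the no-rational-characters hypothesis enters) and the equivalence between $F$-anisotropy and the absence of non-trivial unipotent $F$-points for reductive groups (easy in one direction, but the converse is the Borel--Tits theorem, with genuine extra difficulties over imperfect fields) -- are not incidental technicalities: together with the Siegel-set covering itself they constitute essentially the entire content of the theorem. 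As written, your proposal is a faithful map of the standard proof rather than a proof, which is appropriate for a result the paper itself imports as a black box.
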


We now state the celebrated Margulis arithmeticity theorem \cite[Theorem~IX.1.11]{Margulis}.

\begin{theorem}[Margulis' arithmeticity theorem]
Assume that $\text{rank}(H) \geq 2$. Every irreducible lattice in $H$ is $S$-arithmetic (for some global field $F$,  set of valuations $S$ and algebraic $F$-group $\mathbf H$;  possibly up to passing to a finite central extension).
\end{theorem}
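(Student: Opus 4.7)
The plan is to follow Margulis' original strategy, whose two pillars are the superrigidity theorem and an integrality (arithmeticity) argument for the Zariski-closure of the lattice. Let $\Gamma \leq H = \prod_{i=1}^n \mathbf{H}_i(k_i)$ be the given irreducible lattice with $\mathrm{rank}(H) \geq 2$. First I would reduce to the case where $H$ is center-free, each $\mathbf{H}_i$ is simply connected almost $k_i$-simple, and $\Gamma$ projects densely to every factor (these are standard reductions using that central extensions and finite-index subgroups preserve the arithmeticity conclusion, and that irreducibility of the lattice corresponds to dense projections).

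The heart of the argument is Margulis' superrigidity theorem: any homomorphism $\rho : \Gamma \to \mathbf{G}'(k)$ into the $k$-points of a $k$-simple algebraic group over a local field $k$, whose image has Zariski-dense and unbounded image, extends to a continuous homomorphism of $H$ (after identifying $\Gamma$ with its image in $H$). I would apply superrigidity to the adjoint representation $\mathrm{Ad}\circ\iota$ of $\Gamma$, where $\iota$ is an embedding of $\Gamma$ into $\mathbf{GL}_m$ via its embedding in $H$. This lets one define the \emph{trace field} $F = \mathbb{Q}(\mathrm{tr}(\mathrm{Ad}(\gamma)) : \gamma \in \Gamma)$ (for positive characteristic one takes the appropriate analogue) and show, via a rigidity-of-traces argument combined with finite generation of $\Gamma$ (from property (T) in most cases, or from reduction theory via fundamental domains in general), that $F$ is a global field and only finitely many of its completions are relevant.

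Next, I would use superrigidity again, now for every completion $F_v$ of $F$: this produces an algebraic $F$-group $\mathbf{G}$ and an $F$-embedding of $\Gamma$ into $\mathbf{G}(F)$ so that the diagonal embedding $\Gamma \hookrightarrow \prod_{v \in S} \mathbf{G}(F_v)$ identifies $\Gamma$ with a lattice in a group isogenous to $H$, where $S$ is the finite set of places consisting of Archimedean ones and the places where $\mathbf{G}$ is isotropic and the image of $\Gamma$ is unbounded. For places $v \notin S$, the Galois-theoretic conjugates of $\Gamma$ land in bounded subgroups of $\mathbf{G}(F_v)$, which together with the discreteness of $\Gamma$ in $H$ forces the matrix entries of $\Gamma$ (in a suitable embedding) to be $S$-integers. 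Combining this with Borel--Harish-Chandra (which gives a lattice in $\prod_{v \in S} \mathbf{G}(F_v)$) shows that $\Gamma$ is commensurable with $\mathbf{G}(F(S))$, up to a finite central extension.

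The main obstacle is superrigidity itself, which requires deep ergodic-theoretic input (Margulis' factor theorem, the theory of boundary maps, and in positive characteristic, delicate arguments about algebraic representations of higher-rank arithmetic groups). Once superrigidity is granted, the trace-field and integrality steps are comparatively routine, though the positive-characteristic case and lattices in products of rank-one groups without property (T) require the full strength of Margulis' commensurator superrigidity and its variants. I would cite \cite[Chapters VII--IX]{Margulis} for the details rather than reproduce the argument.
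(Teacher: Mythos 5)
The paper states Margulis' arithmeticity theorem as a quoted classical result, citing \cite[Theorem~IX.1.11]{Margulis} with no proof of its own, and your outline is a faithful sketch of exactly that classical argument (superrigidity, trace field, integrality at the places outside $S$, Borel--Harish-Chandra), likewise deferring the hard analytic core to Margulis' book. So your proposal matches the paper's approach; no gap to report, since neither you nor the paper is expected to reprove superrigidity here.
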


We are now going to introduce some standing assumptions and notations to be used throughout the paper. 

\subsection{The Standing Assumptions}
\label{standing assumption}
Let $F$ be a global field with $\mathrm{char}(F) \neq 2$. Let $S \subset \mathscr{R}$ be some finite set of valuations of $F$ such that $\mathscr{R}_\infty \subset S$ and  $|S| \ge 2$. Denote by $R = F(S)$ the ring  of $S$-integral elements of $F$. Let $\mathbf{G}$ be a connected,  simply-connected and absolutely almost simple algebraic $F$-group with $\mathrm{rank}_F(\mathbf G) = 1$. We equip $\mathbf{G}$ with a fixed $F$-embedding into $\mathbf{GL}_m$ for some $m \in \N$. We define $\Gamma$ to be the $S$-arithmetic group $\mathbf{G}(R)$, so that $\Gamma$ is a non-uniform irreducible lattice in the product semisimple group $\prod_{v \in S} \mathbf{G}(F_v)$.

Whenever the symbols $\mathbf{G}$, $F$, $S$, $R$ and $\Gamma$ are to be understood in the sense of our Standing Assumptions,  we will emphasize this. The relevance of those  assumptions to Theorem \ref{main_thm}  is justified by the following somewhat technical proposition.

\begin{proposition}\label{prop:notationIsGood}
Let $H$ be a semisimple group with $\mathrm{rank}(H) \ge 2$ so that $H$ admits a simple rank-one factor. Let $\Lambda$ be a non-uniform irreducible lattice in $H$. Then there exist $\mathbf{G}$, $F$, $S$, $R$ and $\Gamma$ as in the Standing Assumptions, and a group $\Delta$ commensurable to  $\Lambda$, such that $\Gamma$ is a central extension of $\Delta$.
\end{proposition}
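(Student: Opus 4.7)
The plan is to extract the data $(\mathbf{G}, F, S, R, \Gamma)$ directly from Margulis' arithmeticity theorem, then to verify each clause of the Standing Assumptions using the structural hypotheses on $H$ and $\Lambda$. First I would invoke the arithmeticity theorem in the form stated in Margulis' book: it produces a global field $F$, a finite set of places $S \supset \mathscr{R}_\infty$, and a connected, simply connected, absolutely almost simple $F$-group $\mathbf{G}$, together with a continuous homomorphism $\sigma: \prod_{v\in S} \mathbf{G}(F_v) \to H$ with compact kernel and cokernel such that, after passing to a finite central extension and a finite-index subgroup, $\Lambda$ becomes commensurable to the arithmetic group $\mathbf{G}(F(S))$. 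Setting $R = F(S)$, $\Gamma = \mathbf{G}(R)$, and letting $\Delta$ be the corresponding common finite-index subgroup of $\Lambda$ and of $\sigma(\Gamma)$, one reads off that $\Gamma$ is a (finite) central extension of $\Delta$; the central kernel arises from the simply connected isogeny and from the central finite kernel appearing in arithmeticity.

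Next I would verify the remaining numerical clauses of the Standing Assumptions. The non-uniformity of $\Lambda$ forces $\mathbf{G}$ to be $F$-isotropic by the Borel--Harish-Chandra compactness criterion, giving $\mathrm{rank}_F(\mathbf{G}) \geq 1$. Conversely, since $H$ admits an almost simple rank-one factor, there is some $v_0 \in S$ with $\mathrm{rank}_{F_{v_0}}(\mathbf{G}) = 1$ (after absorbing any possible compact factor of $H$ into the kernel of $\sigma$); together with the general inequality $\mathrm{rank}_F(\mathbf{G}) \leq \mathrm{rank}_{F_{v_0}}(\mathbf{G})$ this yields $\mathrm{rank}_F(\mathbf{G}) = 1$. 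For $|S| \geq 2$: if $|S| = 1$, then $\mathrm{rank}(H)$ would coincide with $\mathrm{rank}_{F_{v_0}}(\mathbf{G}) = 1$, contradicting $\mathrm{rank}(H) \geq 2$. Finally, since $\mathrm{char}(F) = \mathrm{char}(F_v) = \mathrm{char}(k_i)$ for the local fields $k_i$ making up $H$, the characteristic hypothesis from the ambient application of the proposition (coming from Theorem \ref{main_thm}) transfers to give $\mathrm{char}(F) \neq 2$.

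The main technical obstacle is the careful bookkeeping needed to fuse the several layers of Margulis' arithmeticity statement into the single clean assertion \emph{``$\Gamma$ is a central extension of a group commensurable to $\Lambda$''}. There are three sources of slack to reconcile: passing to the simply connected cover of the algebraic group produces a central (possibly non-injective) isogeny on $R$-points whose kernel and cokernel must be shown to be central and finite; the arithmeticity statement itself is up to commensurability and up to a finite central quotient; and one must ensure that $\mathbf{G}$ can be chosen absolutely almost simple (rather than merely semisimple) via a Weil restriction of scalars dictated by the irreducibility of $\Lambda$. Each of these operations is well understood, and when composed they produce precisely a finite central extension $\Gamma \twoheadrightarrow \Delta$ with $\Delta$ commensurable to $\Lambda$, which is the content of the proposition.
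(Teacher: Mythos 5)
Your proposal follows essentially the same route as the paper's proof: invoke Margulis' arithmeticity theorem in its simply connected, absolutely almost simple form to get $f\colon \prod_{v\in S}\mathbf{G}(F_v)\to H$ with central kernel, deduce $F$-isotropy of $\mathbf{G}$ from non-uniformity, obtain $\mathrm{rank}_F(\mathbf{G})=1$ from the rank-one factor via $\mathrm{rank}_F(\mathbf{G})\le \mathrm{rank}_{F_{v_0}}(\mathbf{G})=1$, and obtain $|S|\ge 2$ from $\mathrm{rank}(H)\ge 2$. One small correction: you should take $\Delta=\sigma(\Gamma)$ itself, which the arithmeticity theorem already guarantees to be commensurable with $\Lambda$, rather than a common finite-index subgroup of $\Lambda$ and $\sigma(\Gamma)$ --- for a \emph{proper} finite-index subgroup $\Delta$ of $\sigma(\Gamma)$ the group $\Gamma$ is not a central extension of $\Delta$, only of $\sigma(\Gamma)$.
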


The particular standing assumption  $\text{char}(F) \ne 2$ is not needed here. 

\begin{proof}[Proof of Proposition \ref{prop:notationIsGood}]
The semisimple group $H$ is of the form $H=\prod_{i=1}^n \mathbf{H}_i(k_i)$ for some local fields $k_i$ and semisimple algebraic $k_i$-groups $\mathbf{H}_i$. The Margulis arithmeticity theorem shows that there is a global field $F$, a finite set  $S \subset \mathscr{R}$ of valuations on $F$ with $\mathscr{R}_\infty \subset S$ and a connected, simply-connected absolutely almost simple algebraic $F$-group $\mathbf{G}$ such that:
    \begin{enumerate}
        \item There is a continuous homomorphism $f  : \prod_{v\in S}\mathbf{G}(F_v) \to H$ with $\ker f$ central and with $\mathrm{Im} f$ closed, normal and cocompact in $H$.
        \item The $S$-arithmetic group $\Gamma = \mathbf{G}(F(S))$  satisfies that its image $\Delta = f(\Gamma)$ is commensurable with $\Lambda$.        
\end{enumerate}
The fact that the algebraic $F$-group $\mathbf{G}$ can without loss of generality be taken simply-connected is established in \cite[Remark IX.1.6.i]{Margulis}.
Since the lattice $\Lambda$ is non-uniform, the lattice $\Gamma$ is non-uniform as well. Hence the group $\mathbf{G}$ is $F$-isotropic. We remark that in the case of uniform lattices, one would need to take into consideration potential Archimedean valuations $v \in \mathscr{R}_\infty$ for which $\mathbf{G}(F_v)$ is compact but is not a factor of $H$. In the non-uniform case this nuance can be ignored.
    
The assumption that some factor of the semisimple group $H$ has rank one implies that $\mathrm{rank}_{F_v}(\mathbf{G}) = 1$  for some valuation $v \in S$ \cite[Remark IX.1.3.vii]{Margulis}. This forces the condition $\mathrm{rank}_F(\mathbf{G}) \le 1$. As $\mathbf{G}$ is $F$-isotropic we get $\mathrm{rank}_F(\mathbf{G}) = 1$. In addition, it must be the case  that $\abs{S} \geq 2$ because the semisimple group $H$ admits at least two factors. We conclude that the objects  $\mathbf{G}$, $F$, $S$, $R$ and $\Gamma$ satisfy all the Standing Assumptions.
\end{proof}

\begin{remark}\label{rem:Dirichlet}
The group of units of the ring $R = F(S)$ of $S$-integral elements of the global field $F$ is an abelian group whose rank is $|S|-1$. This is the so called Dirichlet--Hasse--Chevalley unit theorem \cite[Theorem~5.3.10]{WeissANT}. In other words, the assumption $\abs{S} \geq 2$ is equivalent to saying that the ring $R$ admits an infinite order unit. For example, if $F$ is a number field and $S$ consists of its Archimedean valuations, then $R = F(S)$ is the ring of algebraic integers of $F$ and the above claim is the classical unit theorem of Dirichlet. In this case, the assumption $|S| \ge 2$ says that $F$ is neither the field of rational numbers nor an imaginary quadratic field. Dirichlet's unit theorem plays an essential role for us. By Margulis' arithmeticity theorem, the manifestation of the assumption $|S| \ge 2$ in our context is that our arithmetic lattice is an irreducible lattice in a non-trivial product. 
\end{remark}

\begin{remark}
\label{remark:abelian or two step nilpotent}
The $F$-group $\mathbf{G}$ has $\mathrm{rank}_F(\mathbf{G})=1$. Hence its relative root system is either of type $\text{A}_1$ or of type $\text{BC}_1$. Denote by $\mathbf{U}$ the unipotent radical of a minimal $F$-parabolic $\mathbf{P}$. Then $\mathbf{U}$ is either abelian (if the root system is $\text{A}_1$), or two-step nilpotent (if the root system is $\text{BC}_1$). 
\end{remark}

\begin{remark}
Assume that $\mathrm{char}(F)=0$. In that case, the lattices for which character rigidity was not previously known to follow from  existing works  arise as follows: there exists an Archimedean valuation  $v \in S$ such that $\mathbf{G}(F_v)$ is locally isomorphic to either $\mathrm{SO}(n,1)$ or $\mathrm{SU}(n,1)$. This forces the $F$-group $\mathbf{G}$ to come from a restrictive list of algebraic groups, and the arithmetic group $\Gamma$ has one of  two possible descriptions given in Appendix \ref{appen}.
\end{remark}

\begin{remark}
Our terminology of \enquote{semisimple groups} is not standard; there does not seem to be any well-established such term. In Margulis' book \cite{Margulis} no such umbrella term is introduced. Several authors have introduced other closely related variants, see e.g. \cite{levit2020benjamini, gelander2018invariant,gelander2022effective,bader2024spectral}. The  terminology we use here is quite flexible (compared to, say,  \enquote{standard semisimple groups} of \cite{bader2024spectral}). This leeway is justified by Proposition \ref{prop:notationIsGood}.
\end{remark}

\subsection{Arithmetic groups under $F$-morphisms}
When working with arithmetic groups, we will sometimes need to control  different embeddings more carefully than by just saying that the resulting groups are commensurable. To this end, the following lemmas will be useful.

\begin{lemma}[{\cite[Lemma~I.3.1.1.(ii)]{Margulis}}]\label{lemma:congruence under F-morphisms}
    Let $\mathbf{H}_1 \subseteq \mathbf{GL}_m$ and $\mathbf{H}_2 \subseteq \mathbf{GL}_n$ be two algebraic $F$-groups and $f: \mathbf{H}_1 \rightarrow \mathbf{H}_2$ be an $F$-algebraic map. If either
    \begin{itemize}
        \item $f$ is a homomorphism, or
        \item $f$ takes the identity element of $\mathbf{H}_1$ to the identity element of $\mathbf{H}_2$ and $\mathbf{H}_2 \subseteq \mathbf{SL}_n$,
    \end{itemize}
    then for every non-zero ideal $\mathcal I \lhd R$ there exists a non-zero ideal $\mathcal J \lhd R$ such that $f(\mathbf{H}_1(\mathcal J)) \subseteq \mathbf{H}_2(\mathcal I)$. 
\end{lemma}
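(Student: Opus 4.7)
The plan is to expand $f$ explicitly in matrix coordinates and clear denominators so that $f$ carries a sufficiently small congruence subgroup into the target congruence subgroup. Since $\mathbf{H}_1 \subseteq \mathbf{GL}_m$, its coordinate ring $F[\mathbf{H}_1]$ is a quotient of $F[x_{ij}, \det(x)^{-1}]$. Each matrix entry $f_{kl}$ of $f$ is an element of $F[\mathbf{H}_1]$, so I can write $f_{kl}(x) = P_{kl}(x)/\det(x)^{\beta_{kl}}$ for some polynomial $P_{kl}$ with coefficients in $F$ and some integer $\beta_{kl} \geq 0$. Only finitely many coefficients appear across all $k,l$, so I may pick a non-zero $c \in R$ with $cP_{kl} \in R[x_{ij}]$ for every $k,l$. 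Under either hypothesis of the lemma, $f(\mathrm{Id}_m) = \mathrm{Id}_n$ --- automatic when $f$ is a homomorphism, assumed directly in the second case --- hence $P_{kl}(\mathrm{Id}_m) = \delta_{kl}$.

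I would then set $\mathcal{J} := c\mathcal{I}$, which is a non-zero ideal of the domain $R$. Given $g = \mathrm{Id}_m + h \in \mathbf{H}_1(\mathcal{J})$, every entry $h_{ij}$ lies in $c\mathcal{I}$, and expanding the determinant gives $\det(g) \in 1 + c\mathcal{I}$; in particular $\det(g) \in R^\times$ and $\det(g)^{\beta_{kl}} \in R^\times$ as well. Consider the numerator
\[
N_{kl}(h) \;:=\; cP_{kl}(\mathrm{Id}_m + h) \;-\; c\delta_{kl}\det(\mathrm{Id}_m + h)^{\beta_{kl}}.
\]
This is an $R$-coefficient polynomial expression in the variables $h_{ij}$ whose constant term vanishes, since $cP_{kl}(\mathrm{Id}_m) = c\delta_{kl}$. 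Evaluating at $h$ with entries in $c\mathcal{I}$, every monomial has a factor in $c\mathcal{I}$, so $N_{kl}(h) \in c\mathcal{I}$. Dividing by the unit $\det(g)^{\beta_{kl}} \in R^\times$ preserves the ideal, yielding $c\bigl(f_{kl}(g) - \delta_{kl}\bigr) \in c\mathcal{I}$. Cancelling $c$ in the integral domain $R$ gives $f_{kl}(g) - \delta_{kl} \in \mathcal{I}$. This simultaneously shows that $f(g)$ has $R$-integral matrix entries and that $f(g) \equiv \mathrm{Id}_n \pmod{\mathcal{I}}$; hence $f(g) \in \mathbf{H}_2(\mathcal{I})$, as required.

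The main bookkeeping hurdle is the $\det^{-1}$ factor inherited from the source's coordinate ring: one must propagate the congruence $\det(g) \equiv 1 \pmod{\mathcal{J}}$ to $\det(g)^{-1}$, which uses crucially that $\det(g)$ is already a unit in $R$. The dichotomy in the hypotheses serves two purposes: first, to guarantee $f(\mathrm{Id}_m)=\mathrm{Id}_n$; second, to ensure that no parallel difficulty appears on the target. In particular, the assumption $\mathbf{H}_2 \subseteq \mathbf{SL}_n$ in the second case replaces, in the absence of the homomorphism structure, the role of ensuring that the coordinate ring of the target does not itself require a $\det^{-1}$ (the coordinate ring of $\mathbf{SL}_n$ does not).
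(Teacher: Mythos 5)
The paper offers no proof of this lemma --- it is quoted directly from Margulis --- and your argument is essentially the standard one behind that citation: express the matrix entries of $f$ as regular functions on $\mathbf{H}_1$, clear denominators, and use that the resulting polynomials have no constant term after subtracting $f(\mathrm{Id}_m)=\mathrm{Id}_n$. The computation is correct. One small point needs tightening at the very end: membership in $\mathbf{H}_2(\mathcal{I})$ requires $f(g)\in\mathbf{GL}_n(R)$, i.e.\ $\det f(g)\in R^\times$, and this does \emph{not} follow from $f(g)$ having $R$-entries and being congruent to $\mathrm{Id}_n$ modulo $\mathcal{I}$ (already in $R=\Z$, the element $3\in 1+2\Z$ is not a unit); likewise your earlier ``$\det(g)\in 1+c\mathcal{I}$, in particular $\det(g)\in R^\times$'' is true only because $g$ already lies in $\mathbf{GL}_m(R)$, not because of the congruence. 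The fix is immediate and uses exactly the dichotomy in the hypotheses: if $f$ is a homomorphism, apply the same estimate to $g^{-1}\in\mathbf{H}_1(\mathcal{J})$ to see that $f(g)^{-1}=f(g^{-1})$ also has entries in $R$, whence $\det f(g)\in R^\times$; if instead $\mathbf{H}_2\subseteq\mathbf{SL}_n$, then $\det f(g)=1$. This, rather than anything about the coordinate ring of the target (which your proof never uses), is the second role played by the two alternative hypotheses.
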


Recall that for an $F$-split torus $\mathbf{A}$, the  group $\mathbf{A}(R)$ is infinite if and only if $\abs{S}\geq 2$ (Remark \ref{rem:Dirichlet}).

\begin{lemma}
\label{lemma:arithmetic torus}
    Let $\mathbf A$ be an $F$-split torus. Denote by $X(\mathbf{A)}$ the group of characters of  $\mathbf{A}$. Then every $\alpha \in X(\mathbf A)$ satisfies $\alpha(\mathbf A(R)) \subset R$.
\end{lemma}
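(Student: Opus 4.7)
The plan is to apply Lemma~\ref{lemma:congruence under F-morphisms} directly to the character $\alpha$ and then to bootstrap using the fact that the Dedekind domain $R$ is integrally closed in $F$.

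More precisely, I would first apply Lemma~\ref{lemma:congruence under F-morphisms} to the $F$-homomorphism $\alpha \colon \mathbf{A} \to \mathbf{GL}_1$, with $\mathbf{GL}_1$ equipped with its identity embedding into itself. The first (homomorphism) clause of the lemma applies and, choosing any non-zero ideal $\mathcal{I} \lhd R$, produces a non-zero ideal $\mathcal{J} \lhd R$ such that
\[
\alpha(\mathbf{A}(\mathcal{J})) \;\subseteq\; \mathbf{GL}_1(\mathcal{I}) \;\subseteq\; \mathbf{GL}_1(R) \;=\; R^{\times}.
\]
In particular, $\alpha$ maps the principal congruence subgroup $\mathbf{A}(\mathcal{J})$ into $R$.

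Next I would extend this from $\mathbf{A}(\mathcal{J})$ to the full group $\mathbf{A}(R)$ by a finite-index argument. The subgroup $\mathbf{A}(\mathcal{J})$ has finite index in $\mathbf{A}(R)$, since it is the kernel of the reduction map $\mathbf{A}(R) \to \mathbf{GL}_m(R/\mathcal{J})$, whose target is finite because $R/\mathcal{J}$ is finite. Set $N := [\mathbf{A}(R) : \mathbf{A}(\mathcal{J})]$. Then any $a \in \mathbf{A}(R)$ satisfies $a^N \in \mathbf{A}(\mathcal{J})$, and therefore
\[
\alpha(a)^N \;=\; \alpha(a^N) \;\in\; R^{\times}.
\]
Hence $\alpha(a) \in F$ is a root of the monic polynomial $X^N - \alpha(a)^N \in R[X]$, i.e., integral over $R$. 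Since $R$ is a Dedekind domain (the $S$-integers of a global field), it is integrally closed in its field of fractions $F$, yielding $\alpha(a) \in R$ as required.

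The only subtlety is that $\mathbf{H}_2 = \mathbf{GL}_1$ is not contained in any $\mathbf{SL}_n$, which precludes using the second clause of Lemma~\ref{lemma:congruence under F-morphisms}; however, since $\alpha$ is a group homomorphism, the first clause applies with no condition on the codomain. I do not anticipate any other obstacle: the whole argument reduces to Lemma~\ref{lemma:congruence under F-morphisms} together with the integral closure property of Dedekind domains.
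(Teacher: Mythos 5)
Your proof is correct and rests on the same two ingredients as the paper's: the behavior of arithmetic/congruence subgroups under $F$-morphisms (your Lemma~\ref{lemma:congruence under F-morphisms}, which the paper invokes in the equivalent form of Margulis's commensurability lemma) together with the integral closedness of $R$ applied after raising to a suitable power. The only difference is cosmetic: the paper applies the functoriality to the splitting isomorphism $\Psi:\mathbf{A}\to\mathbf{D}_n$ and then notes that characters of the diagonal torus visibly preserve $R$-points, whereas you apply it directly to the character $\alpha:\mathbf{A}\to\mathbf{GL}_1$ and finish with the finite-index power trick --- a mild streamlining of the same argument.
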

\begin{proof}
We regard the torus $\mathbf{A}$ with respect to some embedding into $\mathbf{GL}_m$ for $m \in \N$. Since $\mathbf{A}$ is $F$-split, there exists an $F$-isomorphism $\Psi$ of $\mathbf{A}$ onto the diagonal matrices $\mathbf{D}_n \le \mathbf{GL}_n$ for some $n \in \N$. We claim that $\Psi(\mathbf{A}(R)) \subset \mathbf{D}_n(R)$. Consider an element $x \in \mathbf{A}(R)$. The two groups $\Psi(\mathbf{A}(R))$ and $\mathbf{D}_n(R)$ are commensurable by \cite[Lemma I.3.1.1]{Margulis}. Hence there is some power $k \in \N$ such that $\Psi(x)^k \in \mathbf{D}_n(R)$. This means that $\Psi(x)$ is a diagonal matrix with entries from the field $F$ whose $k$-th power belong to the ring $R$. Since $R$ is integrally closed, this implies that $\Psi(x) \in \mathbf{D}_n(R)$ in the first place, as required.
The claim implies that every character $\beta \in X_F(\mathbf D_n)$ 
satisfies $\beta(\Psi(\mathbf A(R))) \subset  R$. However, every character  $\alpha \in X_F(\mathbf{A})$ is of the form $\alpha = \beta \circ \Psi$ for some character $\beta \in X_F(\mathbf D_n)$. The desired conclusion follows.
\end{proof}

Roughly speaking, the following lemma shows that taking the quotient of a unipotent algebraic group by a normal subgroup, essentially induces a surjection on arithmetic groups. We include a proof of this lemma for lack of good reference.

\begin{lemma}
\label{label:modding out unipotents}
Let $\mathbf{G}$ be a connected unipotent algebraic $F$-group and  $\mathbf H$ be a connected $F$-split normal subgroup of $\mathbf {G}$. The quotient $\mathbf{G}/ \mathbf{H}$ is an algebraic $F$-group. Let  $\pi : \mathbf{G} \to \mathbf{G}/ \mathbf{H}$ be the $F$-rational quotient map. For each non-zero ideal $\mathcal{I} \lhd R$ there is a non-zero ideal $\mathcal{J} \lhd R$ such that $(\mathbf{G}/\mathbf{H})(\mathcal J) \subset \pi(\mathbf G(\mathcal I))$.
\end{lemma}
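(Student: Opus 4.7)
The plan is to reduce the statement to Lemma~\ref{lemma:congruence under F-morphisms} by constructing an $F$-rational section of $\pi$ as a morphism of varieties (not of groups). Specifically, I aim to exhibit an $F$-morphism $\sigma\colon \mathbf{G}/\mathbf{H}\to\mathbf{G}$ of affine $F$-varieties satisfying $\pi\circ\sigma=\mathrm{id}$ and $\sigma(e)=e$. Once such a $\sigma$ is produced, the second bullet of Lemma~\ref{lemma:congruence under F-morphisms}---applied to $\sigma$ together with any $F$-embedding of $\mathbf{G}/\mathbf{H}$ into some $\mathbf{GL}_n$, and noting that $\mathbf{G}$ being unipotent automatically sits inside $\mathbf{SL}_m$---yields, for any non-zero ideal $\mathcal I\lhd R$, a non-zero ideal $\mathcal J\lhd R$ with $\sigma\bigl((\mathbf{G}/\mathbf{H})(\mathcal J)\bigr)\subset \mathbf{G}(\mathcal I)$. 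Applying $\pi$ and using $\pi\circ\sigma=\mathrm{id}$ then gives $(\mathbf{G}/\mathbf{H})(\mathcal J)=\pi\bigl(\sigma((\mathbf{G}/\mathbf{H})(\mathcal J))\bigr)\subset\pi(\mathbf{G}(\mathcal I))$, which is precisely the claim.

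To produce $\sigma$, I will use the fact that the $F$-split connected unipotent group $\mathbf{H}$ admits a composition series by $F$-subgroups $\mathbf{H}=\mathbf{H}_0\supset\mathbf{H}_1\supset\cdots\supset\mathbf{H}_k=\{e\}$, each normal in $\mathbf{G}$, with successive quotients $\mathbf{H}_i/\mathbf{H}_{i+1}\cong \mathbb{G}_a$ over $F$. Such a filtration is obtained by refining the lower central series of $\mathbf{H}$ using Kolchin's theorem applied to the conjugation action of the unipotent group $\mathbf{G}$ on the vector-group quotients. Setting $\mathbf{G}_i := \mathbf{G}/\mathbf{H}_i$, we obtain a tower of $F$-algebraic groups
\[
\mathbf{G}=\mathbf{G}_k \twoheadrightarrow \mathbf{G}_{k-1}\twoheadrightarrow \cdots \twoheadrightarrow \mathbf{G}_0=\mathbf{G}/\mathbf{H},
\]
in which every step $\mathbf{G}_{i+1}\to \mathbf{G}_i$ is a $\mathbb{G}_a$-torsor over the affine $F$-scheme $\mathbf{G}_i$. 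Since $H^1(X,\mathbb{G}_a)=H^1(X,\mathcal{O}_X)=0$ for any affine $F$-scheme $X$, each such torsor is trivial and admits an $F$-rational variety section. Composing these sections yields a section $\sigma\colon \mathbf{G}/\mathbf{H}\to\mathbf{G}$ of $\pi$; post-composing with the left translation by $\sigma(e)^{-1}\in\mathbf{H}$ we further arrange that $\sigma(e_{\mathbf{G}/\mathbf{H}})=e_{\mathbf{G}}$.

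The main obstacle is the construction of the variety section $\sigma$. In characteristic zero, the exponential map identifies a connected unipotent $F$-group with its Lie algebra as an $F$-variety, making a section immediate from a linear splitting; but global fields of positive characteristic are not perfect and this shortcut fails. The $F$-splitness hypothesis on $\mathbf{H}$ is precisely what allows us to bypass this: it provides a filtration whose successive quotients are vector groups, reducing the existence of the section to the cohomological vanishing of $H^1(X,\mathcal{O}_X)$ on affine bases, which holds in all characteristics.
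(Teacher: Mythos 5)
Your proposal is correct and follows essentially the same route as the paper: the paper obtains an identity-preserving $F$-variety cross-section of $\pi$ (in the equivalent form of an $F$-variety isomorphism $\mathbf{G}/\mathbf{H}\times\mathbf{H}\to\mathbf{G}$) by citing Springer's cross-section theorem, and then feeds it into Lemma \ref{lemma:congruence under F-morphisms} exactly as you do. The only difference is that you reconstruct the section by hand, via a $\mathbb{G}_a$-filtration of the $F$-split group $\mathbf{H}$ and the vanishing of $H^1(X,\mathcal{O}_X)$ on affine bases, rather than quoting it.
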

\begin{proof}
The quotient $\mathbf{G}/ \mathbf{H}$ is indeed an algebraic $F$-group and the quotient map is $F$-rational  \cite[Theorem 6.8]{Borel}. There is an $F$-isomorphism of varieties $\Psi : \mathbf G/ \mathbf H \times \mathbf H \to \mathbf G$, such that the composition of $\Psi^{-1}$ with the projection to the first coordinate is the quotient map $\pi$  \cite[Theorem~14.2.6]{Springer}. 
The map $\Psi$ is not a homomorphism, but it does take the identity to the identity,  and its range is the unipotent group $\mathbf{G}$  embedded in $\mathbf{SL}_n$. Hence Lemma \ref{lemma:congruence under F-morphisms} is applicable with respect to the map $\Psi$. Specifically, 
given any non-zero ideal $\mathcal{I} \lhd R$ we may apply Lemma \ref{lemma:congruence under F-morphisms} and find a non-zero ideal $\mathcal J \lhd R$ with $$\Psi\left( (\mathbf G/ \mathbf H \times \mathbf H)(\mathcal{J})\right) \subset \mathbf{G}(\mathcal{I}).$$
 Applying $\Psi^{-1}$ to both sides and then projecting to $\mathbf G/\mathbf H$ implies the desired conclusion.
\end{proof}

\section{Commensurability and central extensions}\label{sec:4}

Recall that a countable group $\Lambda$ is called \emph{ICC (infinite conjugacy classes)} if the conjugacy class of every non-trivial element of $\Lambda$ is infinite.

In this section we establish that character rigidity (in the strict sense of Definition \ref{def:character rigidity}) is a commensurability invariant for ICC groups admitting at most countably many finite-dimensional unitary representations. We also show that character rigidity passes to (possibly infinite) central extensions, under the same assumption on finite-dimensional representations. The latter observation will allow us we deduce Theorem \ref{inf_ctr} for lattices in higher-rank semisimple Lie groups with infinite center.

Having at most countably many finite-dimensional unitary representations is useful due to the following observation, whose proof is immediate from the definitions: 
\begin{lemma}
\label{lemma:applying finitely many}
Let $\Lambda$ be a character rigid group admitting at most countably many finite-dimensional unitary representations. Then any weakly mixing trace on $\Lambda$ vanishes off the center $\mathrm{Z}(\Lambda)$.
\end{lemma}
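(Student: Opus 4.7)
The plan is to use the Choquet decomposition of $\varphi$ via its Bochner transform $\mu_\varphi \in \mathrm{Prob}(\Ch{\Lambda})$, and show that $\mu_\varphi$ is concentrated on the set of characters vanishing off $\mathrm{Z}(\Lambda)$. By character rigidity, the Thoma dual splits as a disjoint union $\Ch{\Lambda} = \Ch{\Lambda}_{\mathrm{fd}} \sqcup \Ch{\Lambda}_{\mathrm{cent}}$, where $\Ch{\Lambda}_{\mathrm{fd}}$ consists of the finite-dimensional characters and $\Ch{\Lambda}_{\mathrm{cent}}$ of those characters that vanish outside the center.

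First I would observe that every finite-dimensional character is determined up to equivalence by its irreducible GNS representation. Hence the countability hypothesis forces $\Ch{\Lambda}_{\mathrm{fd}}$ to be a countable (and in particular Borel) subset of the Thoma dual. Next I would argue that $\mu_\varphi$ has no atoms on $\Ch{\Lambda}_{\mathrm{fd}}$: an atom of mass $c > 0$ at a finite-dimensional character $\psi_0$ would yield a decomposition $\mu_\varphi = c \delta_{\psi_0} + (1-c)\nu$, and correspondingly $\varphi = c\psi_0 + (1-c)\eta$, so that $\varphi - c\psi_0 \in \mathrm{PD}(\Lambda)$ and $\psi_0 \le \varphi$ in the domination order, contradicting the weak mixing of $\varphi$. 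Combining these two observations gives $\mu_\varphi(\Ch{\Lambda}_{\mathrm{fd}}) = 0$, so $\mu_\varphi$ is supported on $\Ch{\Lambda}_{\mathrm{cent}}$.

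For any $g \in \Lambda \setminus \mathrm{Z}(\Lambda)$ I would then simply compute
$$\varphi(g) = \int_{\Ch{\Lambda}_{\mathrm{cent}}} \psi(g)\, d\mu_\varphi(\psi) = 0,$$
since $\psi(g) = 0$ for every $\psi$ in the support of $\mu_\varphi$. There is no substantial obstacle here, as the lemma is genuinely immediate from the definitions. The one conceptual point worth emphasizing is that the countability hypothesis on finite-dimensional unitary representations cannot be dropped: weak mixing alone only excludes atoms on $\Ch{\Lambda}_{\mathrm{fd}}$, and absent countability of $\Ch{\Lambda}_{\mathrm{fd}}$ the Bochner transform could in principle carry continuous mass on the finite-dimensional part of the Thoma dual.
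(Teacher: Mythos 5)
Your argument is correct and is exactly the one the paper intends: the authors state the lemma without proof (calling it ``immediate from the definitions''), but the remarks in \S\ref{subsec:2.2} --- that a weakly mixing trace has no atoms at finite-dimensional characters, and that countability of the finite-dimensional dual is what rules out continuous mass there --- show they have precisely your Bochner-transform argument in mind. The only cosmetic point is that your two pieces of $\Ch{\Lambda}$ need not be disjoint (a finite-dimensional character may also vanish off the center), but since you only use that the complement of $\Ch{\Lambda}_{\mathrm{fd}}$ consists of characters vanishing off $\mathrm{Z}(\Lambda)$ and that $\mu_\varphi(\Ch{\Lambda}_{\mathrm{fd}})=0$, nothing is affected.
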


On the other hand, if a countable group has uncountably many finite-dimensional unitary representations, it will admit weakly mixing traces whose Bochner transform is supported on finite-dimensional characters (without any atoms).

It is useful to keep in mind that the Dirac function $\delta_{e}$ is always a trace on any countable group $\Lambda$. It is a character if and only if $\Lambda$ is ICC \cite[Proposition 7.A.1]{BdlH}.

\begin{proposition} \label{prop:char_rig_commensurability}
Let $\Lambda$ be an ICC countable group having at most countably many finite-dimensional unitary representations. Let  $\Lambda_0$ be a finite-index subgroup of $\Lambda$. Then $\Lambda$ is character rigid if and only $\Lambda_0$ is.
\end{proposition}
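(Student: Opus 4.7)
My plan has two directions, both resting on the interplay between induction and restriction of traces (\cref{prop:prop_of_Ind_res}) and \cref{lemma:applying finitely many}, which under the countable finite-dimensional representation hypothesis characterizes character rigidity as the vanishing of weakly mixing traces outside the center. First I would make two preliminary observations. (a) $\Lambda_0$ inherits the countable finite-dimensional representation property from $\Lambda$: by Frobenius reciprocity, induction to $\Lambda$ gives a finite-to-one map between isomorphism classes of such representations. (b) Since $\Lambda$ is ICC, so is $\Lambda_0$: if $C_\Lambda(g) \cap \Lambda_0$ had finite index in $\Lambda_0$ for some $g \in \Lambda_0 \setminus \{e\}$, then $C_\Lambda(g)$ would have finite index in $\Lambda$, contradicting ICC. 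Consequently $\mathrm{Z}(\Lambda) = \mathrm{Z}(\Lambda_0) = \{e\}$, so character rigidity here amounts to saying that every weakly mixing character equals $\delta_e$; note also that $\delta_e$ is itself a character on an ICC group, as its GNS algebra is the factor $L(\Lambda)$. Then I would reduce to the case where $\Lambda_0$ is normal in $\Lambda$ by passing to the normal core $N = \bigcap_{g \in \Lambda} g\Lambda_0 g^{-1}$ and applying the normal-case proposition to both $(\Lambda, N)$ and $(\Lambda_0, N)$.

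For the direction $(\Rightarrow)$ in the normal case, I would take a weakly mixing character $\tau \in \Ch{\Lambda_0}$, induce to get a weakly mixing trace $\Ind_{\Lambda_0}^\Lambda \tau$ on $\Lambda$ by \cref{prop:prop_of_Ind_res}, and apply \cref{lemma:applying finitely many} to conclude $\Ind_{\Lambda_0}^\Lambda \tau = \delta_e^\Lambda$. Restricting to $\Lambda_0$ and unpacking \eqref{eq:formula for induction} under the normality assumption would give
\[
\delta_e^{\Lambda_0} = \frac{1}{[\Lambda : \Lambda_0]} \sum_{h \in \Lambda / \Lambda_0} \tau^h.
\]
The key move is to invoke the extremality of $\delta_e^{\Lambda_0}$ in $\Tr{\Lambda_0}$ (it is a character on an ICC group!) to conclude that this convex decomposition into the characters $\tau^h$ must be trivial, forcing $\tau^h = \delta_e^{\Lambda_0}$ for every $h$, and in particular $\tau$ itself vanishes on $\Lambda_0 \setminus \{e\}$.

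For the direction $(\Leftarrow)$ in the normal case, I would start with a weakly mixing character $\varphi \in \Ch{\Lambda}$, whose restriction $\varphi|_{\Lambda_0}$ is weakly mixing on $\Lambda_0$ by \cref{prop:prop_of_Ind_res}, hence vanishes on $\Lambda_0 \setminus \{e\}$ by \cref{lemma:applying finitely many} applied to $\Lambda_0$. To extend this vanishing to an element $g \in \Lambda \setminus \Lambda_0$, I would invoke the generalized Bekka lemma (\cref{lemma:generalized bekka lemma}). Because $C_\Lambda(g) \cap \Lambda_0$ has infinite index in $\Lambda_0$ by ICC, there exists a sequence $(x_n) \subset \Lambda_0$ with $x_n^{-1} x_m \notin C_\Lambda(g)$ for $n > m$; normality of $\Lambda_0$ then places $[g, x_n^{-1} x_m]$ in $\Lambda_0 \setminus \{e\}$, and cyclic invariance of $\varphi$ gives the identity $\varphi([g, x_n]^{-1} [g, x_m]) = \varphi([g, x_n^{-1} x_m]) = 0$ for $n > m$. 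The generalized Bekka lemma then yields $\varphi(g) = 0$.

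The main conceptual difficulty would be in direction $(\Rightarrow)$, where extracting the individual vanishing of $\tau$ from the orbit-averaged identity is not possible in general (distinct characters $\tau^h$ could cancel at individual elements). The resolution is immediate once one recognizes the averaged expression as the character $\delta_e^{\Lambda_0}$ of the ICC group $\Lambda_0$, an extreme point of $\Tr{\Lambda_0}$; this is the principal dividend of working with ICC groups.
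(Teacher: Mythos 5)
Your proposal is correct and establishes the proposition, but both directions take a genuinely different route from the paper, which never reduces to the normal case. In the forward direction the paper induces the non-negative trace $|\tau|^2$ rather than $\tau$ itself: then $\Ind_{\Lambda_0}^{\Lambda}|\tau|^2$ is a finite average of non-negative functions equal to $\delta_e$, so each summand, and hence $|\tau|^2$ and $\tau$, vanishes off $e$ -- no normality needed. Your alternative, restricting the induced trace back to the (normalized) subgroup and invoking extremality of $\delta_e$ in $\Tr{\Lambda_0}$, is equally valid and a clean use of the ICC hypothesis, but it genuinely requires $\Lambda_0\lhd\Lambda$ (otherwise the restriction of $\widetilde{\tau}^{\,g}$ to $\Lambda_0$ need not be a trace), whence your normal-core detour; the paper's non-negativity trick buys you the non-normal case for free. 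In the converse direction the paper likewise avoids normality by a pigeonhole argument: it chooses conjugators $x_n$ with $g^{x_n}$ pairwise distinct and passes to a subsequence so that all $[g,x_n]$ lie in one coset of $\Lambda_0$, which places $[g,x_n]^{-1}[g,x_m]$ in $\Lambda_0\setminus\{e\}$; your normality shortcut achieves the same. One small slip in your converse: with the convention $[g,h]=g^{-1}h^{-1}gh$, the trace identity is $\varphi([g,x_n]^{-1}[g,x_m])=\varphi([g,x_m x_n^{-1}])$, not $\varphi([g,x_n^{-1}x_m])$, so you should choose the $x_n$ to lie in pairwise distinct \emph{right} cosets of $C_\Lambda(g)\cap\Lambda_0$ in $\Lambda_0$ (possible, since left and right index coincide and are infinite); with that adjustment the argument goes through verbatim.
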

\begin{proof}
In one direction, we assume that $\Lambda$ is character rigid and show that $\Lambda_0$ is character rigid as well.
Let $\tau \in \Ch{\Lambda_0}$ be a weakly mixing character.
As such, $|\tau|^2$ is a weakly mixing trace.
By Proposition \ref{prop:prop_of_Ind_res}, the induced trace $\varphi = \Ind_{\Lambda_0}^\Lambda |\tau|^2$ is also weakly mixing.
Lemma \ref{lemma:applying finitely many} implies that $\varphi$ is the Dirac character on $\Lambda$. Now, the induced trace $\varphi$ is a normalized sum of finitely many non-negative functions of the form $\widetilde{|\tau|^2}^g$.  It follows that each one of these summands must in itself be equal to the Dirac character. In particular  $|\tau|^2 =\delta_e$ so that $\tau = \delta_e$ as required.

In the opposite direction, we assume that $\Lambda_0$ is character rigid and show that $\Lambda$ is character rigid as well. The subgroup $\Lambda_0$ is  ICC, being a finite-index subgroup of  an ICC group. In particular $\mathrm{Z}(\Lambda) = \mathrm{Z}(\Lambda_0) = \{e\}$.
Let $\varphi$ be a weakly mixing character of $\Lambda$. Hence the restriction $\varphi|_{\Lambda_0}$ is a weakly mixing trace by Proposition \ref{prop:prop_of_Ind_res}. The subgroup $\Lambda_0$ is center-free, and has at most countably many finite-dimensional unitary representations by Proposition \ref{finitly many reps comensurability} below. Hence  $\varphi|_{\Lambda_0}$ is the Dirac character $\delta_e$ by Lemma \ref{lemma:applying finitely many}. By the ICC property, for every non-trivial element $g \in \Lambda$ there are infinitely many elements $x_n \in \Lambda$ such that the conjugates $g^{x_n}$ are pairwise distinct. Up to passing to a subsequence, we may assume without loss of generality that the pairwise distinct commutators $\left[g,x_n\right]$ all belong to the same coset of $\Lambda/\Lambda_0$. We conclude that $\varphi(g) = 0$ by applying the generalized Bekka lemma (see Lemma \ref{lemma:generalized bekka lemma}) with respect to this data.
\end{proof}

\begin{remark}
Proposition \ref{prop:char_rig_commensurability} holds true for any  ICC group, without the extra assumption on finite-dimensional representations. However, the proof becomes more technical, and we will only need to apply this proposition to higher-rank lattices, which do satisfy this extra assumption (see \cite[Proposition~7.1]{BBHP}).
\end{remark}
\begin{proposition} \label{prop:char_rig_extensions}
Let $\Lambda$ be a countable group with center $C = \mathrm{Z}(\Lambda)$. Assume that the quotient group $\Lambda/C$ is center-free and has at most countably many finite-dimensional unitary representations. Then $\Lambda$ is character rigid if and only if $\Lambda/C$ is.
\end{proposition}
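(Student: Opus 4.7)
The plan is to prove both implications separately; the forward direction is largely formal while the reverse direction carries the substantive content.

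For the forward direction, I would take a character $\psi \in \Ch{\Lambda/C}$ and consider its pullback $\tilde\psi = \psi \circ q$ along the quotient map $q : \Lambda \to \Lambda/C$. First one checks that $\tilde\psi$ remains extremal on $\Lambda$: any convex decomposition $\tilde\psi = t\tau_1 + (1-t)\tau_2$ with $\tau_i \in \Tr{\Lambda}$ forces $\tau_i(c) = 1$ for every $c \in C$ (since $\tilde\psi|_C \equiv 1$ and $|\tau_i(c)| \le 1$), putting $C$ inside $\ker\tau_i$; hence each $\tau_i$ descends to $\Lambda/C$, and extremality of $\psi$ yields $\tau_1 = \tau_2 = \tilde\psi$. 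Character rigidity of $\Lambda$ applied to $\tilde\psi$ then gives two cases. If $\tilde\psi$ is finite-dimensional, the associated irreducible representation has trivial $C$-central character by Lemma \ref{lemma:schur}, so it factors through $\Lambda/C$ and realizes $\psi$ as a finite-dimensional character. If $\tilde\psi$ vanishes off $\mathrm{Z}(\Lambda) = C$, then $\psi$ equals the Dirac function at the identity of $\Lambda/C$, which vanishes off the trivial center $\mathrm{Z}(\Lambda/C) = \{e\}$.

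For the reverse direction, assume $\Lambda/C$ is character rigid and let $\varphi \in \Ch{\Lambda}$; the finite-dimensional case is immediate, so assume $\varphi$ is weakly mixing. By Lemma \ref{lemma:schur} the restriction $\varphi|_C$ is a multiplicative character, hence $|\varphi(cg)|^2 = |\varphi(g)|^2$ and $|\varphi|^2$ descends to a trace $\bar\psi$ on $\Lambda/C$. The key observation is that $|\varphi|^2$ inherits weak mixing from $\varphi$: using the pointwise-decay characterization from Section \ref{subsec:2.2}, the estimate $|\varphi(gh)| \le \sqrt{\varepsilon}$ for all $h$ in a prescribed finite set yields $|\varphi|^2(gh) \le \varepsilon$ directly. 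The descended trace $\bar\psi$ is then weakly mixing on $\Lambda/C$, since any finite-dimensional sub-trace would pull back to one of $|\varphi|^2$. Because $\Lambda/C$ is character rigid and has at most countably many finite-dimensional representations, Lemma \ref{lemma:applying finitely many} applies and forces $\bar\psi$ to vanish off $\mathrm{Z}(\Lambda/C) = \{e\}$. Lifting back, $|\varphi(g)| = 0$ and hence $\varphi(g) = 0$ for every $g \notin C$, completing the proof.

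The main conceptual point is recognizing that $|\varphi|^2$ is the right auxiliary object to descend to $\Lambda/C$---$\varphi$ itself does not descend when $\varphi|_C$ is nontrivial---and that weak mixing transfers from $\varphi$ to $|\varphi|^2$ essentially automatically in the pointwise formulation. A purely representation-theoretic attempt would require showing that $\pi_\varphi \otimes \overline{\pi_\varphi}$ has no finite-dimensional subrepresentation when $\pi_\varphi$ generates a II$_1$ factor, a nontrivial fact about II$_1$ factors acting on their standard representations; the pointwise-decay characterization sidesteps this difficulty cleanly.
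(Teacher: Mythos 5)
Your proposal is correct and follows essentially the same route as the paper: the substantive direction descends $|\varphi|^2$ to $\Lambda/C$ via Schur's lemma (Lemma \ref{lemma:schur}), checks it remains weakly mixing, and invokes Lemma \ref{lemma:applying finitely many}, while the other direction pulls characters back along the quotient map. The extra details you supply (extremality of the pullback, the pointwise-decay argument for weak mixing of $|\varphi|^2$) are correct elaborations of steps the paper leaves implicit.
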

\begin{proof}
    Assume that the quotient group $\Lambda/C$ is character rigid. We show that $\Lambda$ itself is character rigid.
    Let $\varphi \in \Ch{\Lambda}$ be a weakly mixing character. Recall that
    $C \leq \Pker \varphi$ by Lemma \ref{lemma:schur} and the discussion following it. Hence the trace $|\varphi|^2$ factors to a weakly mixing trace of the quotient $\Lambda/C$.
    Character rigidity and Lemma \ref{lemma:applying finitely many} implies that $|\varphi|^2$ factors to the Dirac trace of $\Lambda/C$. Equivalently $|\varphi|^2$ vanishes outside of $C$. 
    Consequently, the character $\varphi$ also vanishes outside of $C$, as required.
    
    Conversely, assume that $\Lambda$ is character rigid. We show that the quotient $\Lambda/C$ is character rigid.
    Indeed, if $\varphi  \in \Ch{\Lambda/C}$ is a weakly mixing character, then it is also a weakly mixing character when pulled back to $\Lambda$.
    By character rigidity for $\Lambda$, the pullback of $\varphi$ must vanish outside of $C$, so that $\varphi$ is the Dirac trace of $\Lambda/C$. This completes the proof.
\end{proof}

\begin{corollary}\label{cor:char_rig_commensuability_nonICC}
Let $H$ be a higher-rank semisimple group and $\Lambda_1,\Lambda_2$ be two commensurable lattices in $H$. Then  $\Lambda_1$ is character rigid if and only if $\Lambda_2$ is.
\end{corollary}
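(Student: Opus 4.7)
The plan is to reduce the statement to Propositions \ref{prop:char_rig_commensurability} and \ref{prop:char_rig_extensions} by passing through the center-free quotients. The key external input, which supplies the \enquote{countably many finite-dimensional representations} hypothesis of both propositions, is that any higher-rank lattice has at most countably many finite-dimensional unitary representations, as recorded in \cite[Proposition 7.1]{BBHP}. This property is straightforwardly inherited by finite-index subgroups (via induction) and by quotients (via lift-and-restrict).

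First I would reduce to a finite-index inclusion: applying the desired equivalence twice along $\Lambda_1 \cap \Lambda_2 \leq \Lambda_i$ means it suffices to treat a finite-index inclusion $\Lambda \leq \Lambda'$ of lattices in $H$. Set $C = \mathrm{Z}(\Lambda)$, $C' = \mathrm{Z}(\Lambda')$, $\bar\Lambda = \Lambda/C$ and $\bar{\Lambda}' = \Lambda'/C'$. The next goal is to show that the natural homomorphism $\bar\Lambda \to \bar{\Lambda}'$ is injective with finite-index image. The inclusion $\Lambda \cap C' \subseteq C$ is immediate from the fact that $C'$ centralizes $\Lambda' \supseteq \Lambda$. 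For the reverse inclusion I would use Borel density: since $\Lambda$ has finite index in the lattice $\Lambda'$, the two subgroups have the same Zariski closure in each non-compact factor of $H$, and so an element of $\Lambda$ centralizing $\Lambda$ also centralizes $\Lambda'$ (up to the finite compact-factor contribution, which can be absorbed by passing to a further finite-index subgroup). This yields $C = \Lambda \cap C'$, hence the desired injection $\bar\Lambda \hookrightarrow \bar{\Lambda}'$ with finite index.

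It remains to verify the ICC hypothesis and chain the two propositions. Both $\bar\Lambda$ and $\bar{\Lambda}'$ are center-free by construction, and are ICC as higher-rank lattices modulo their centers: by Borel density again, the centralizer in $\Lambda'$ of any non-central element has infinite index in $\Lambda'$, so every non-trivial conjugacy class in $\bar{\Lambda}'$ is infinite, and the same holds for $\bar\Lambda$. Combined with the countable-representations property, Proposition \ref{prop:char_rig_extensions} gives that $\Lambda$ is character rigid if and only if $\bar\Lambda$ is, and similarly for $\Lambda'$; Proposition \ref{prop:char_rig_commensurability} applied to the finite-index inclusion $\bar\Lambda \leq \bar{\Lambda}'$ gives that $\bar\Lambda$ is character rigid if and only if $\bar{\Lambda}'$ is. Chaining these three equivalences yields the corollary. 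The main obstacle is the center-alignment step, which is the only place where the higher-rank (equivalently, Zariski-density) hypothesis plays an essential role; away from that, the argument is formal bookkeeping.
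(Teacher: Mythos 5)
Your proposal is correct and follows essentially the same route as the paper: pass to the center-free quotients, verify they are ICC with at most countably many finite-dimensional unitary representations (via \cite[Proposition 7.1]{BBHP}), and chain Propositions \ref{prop:char_rig_extensions} and \ref{prop:char_rig_commensurability}. The paper shortcuts your center-alignment step by projecting both lattices to the adjoint group $\mathrm{Ad}(H)$ — so the quotients are automatically commensurable inside $\mathrm{Ad}(H)$ and coincide with $\Lambda_i/\mathrm{Z}(\Lambda_i)$ once ICC is known — citing the ICC property from the literature rather than running the Borel-density argument by hand as you do.
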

\begin{proof}
Consider the adjoint semisimple group $\mathrm{Ad}(H) = H/\mathrm{Z}(H)$. 
The projections $\mathrm{Ad}(\Lambda_1)$ and $\mathrm{Ad}(\Lambda_2)$ of the two lattices $\Lambda_1$ and $\Lambda_2$ to $\text{Ad}(H)$ are still commensurable. 
Further $\mathrm{Ad}(\Lambda_1)$ and $\mathrm{Ad}(\Lambda_2)$ are ICC groups \cite[Lemma 5.4]{bader2024spectral}. Certainly $\Lambda_1$ and $\Lambda_2$ are central extensions of the center-free groups $\mathrm{Ad}(\Lambda_1)$ and $\mathrm{Ad}(\Lambda_2)$. The result now follows by putting together Propositions \ref{prop:char_rig_commensurability} and \ref{prop:char_rig_extensions}. We note that all lattices in question have at most countably many finite-dimensional unitary representations by \cite[Proposition~7.1]{BBHP}.
\end{proof}

\section{Characters of arithmetic solvable groups}\label{sec:5}

We study the character theory of certain arithmetic solvable groups, building towards the proof of our key Theorem \ref{theorem:on characters of G admitting AU and AV}. The first goal is to characterize congruence characters of a unipotent arithmetic group $U$ in terms of their orbits under the action of a certain semisimple element $a$ (Proposition \ref{prop:rational if and only of finite orbit}). The second goal is to classify characters of the semidirect product  $M = \langle a \rangle \ltimes U$ (Proposition \ref{prop:characters of AU}). This information is then used to study traces of this semidirect product $M$ whose restriction to the cyclic subgroup $\langle a \rangle$ of a split torus is mixing.

In the current section we will freely use the Standing Assumptions \S\ref{standing assumption}.  In particular, recall that the field $F$ and the set of valuations $S$ are chosen such that the ring $R=F(S)$ has an infinite order unit.  See \S\ref{sec:3} for more details on arithmetic and congruence subgroups.

\subsection{Congruence characters}

Let $\mathbf{U}$ be a connected algebraic $F$-group and $\mathbf{U}(R)$ be an arithmetic group.

\begin{definition}
\label{def:congruence characters}
A trace $\varphi \in \Tr{\mathbf{U}(R)}$ is called a \emph{congruence trace} if its kernel $\ker \varphi$ contains the congruence subgroup $\mathbf{U}(\mathcal I)$ for some non-zero ideal $\mathcal{I} \lhd R$. A \emph{congruence character} is a congruence trace which is a character.

A unitary representation $\pi$ of $\mathbf{U}(R)$ is called a \emph{congruence representation} if its kernel $\ker\pi$ contains the congruence subgroup $\mathbf{U}(\mathcal I)$ for some non-zero ideal $\mathcal{I} \lhd R$.
\end{definition}

\begin{remark}
The GNS representation of a congruence trace is an example of a congruence representation.    
\end{remark}

We begin with the following useful structural lemma regarding an algebraic group endowed with an action of a split torus. For an algebraic $F$-group $\mathbf{U}$ and an $F$-split torus $\mathbf{S}$ acting on it algebraically, we denote by $\Phi(\mathbf{S},\mathbf{U})$ the set of weights of $\mathbf{S}$ appearing in the Lie algebra $\mathrm{Lie}(\mathbf{U})$. The reader is referred to \cite[\S I.3]{Borel} for a discussion on Lie algebras of  algebraic groups.

\begin{lemma}\label{lemma: central series}
Assume that the connected algebraic $F$-group $\mathbf{U}$ is $F$-split unipotent. Let  $\mathbf{S}$ be an $F$-split torus acting algebraically on $\mathbf{U}$. If $0 \notin \Phi(\mathbf{S},\mathbf{U})$ then there exists an $\mathbf{S}$-invariant central series $\{e\} = \mathbf{U}_0 \subseteq \mathbf{U}_1 \cdots \subseteq \mathbf{U}_n=\mathbf{U}$ such that the subquotients $\mathbf{U}_i / \mathbf{U}_{i-1}$ are $\mathbf{S}$-equivariantly $F$-isomorphic  to a vector group with a scalar action. 
\end{lemma}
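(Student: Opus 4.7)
The plan is to start from the lower central series of $\mathbf{U}$ and refine it using the weight decomposition induced by the $\mathbf{S}$-action. First I would consider the descending central series $\mathbf{U} = \mathbf{U}^{(1)} \supseteq \mathbf{U}^{(2)} = [\mathbf{U},\mathbf{U}^{(1)}] \supseteq \cdots \supseteq \mathbf{U}^{(r+1)} = \{e\}$. Each $\mathbf{U}^{(i)}$ is $F$-defined (commutators of $F$-morphisms are $F$-rational), normal in $\mathbf{U}$, and preserved by the $\mathbf{S}$-action since $\mathbf{S}$ acts on $\mathbf{U}$ by algebraic automorphisms. Each subquotient $V_i = \mathbf{U}^{(i)}/\mathbf{U}^{(i+1)}$ is central in $\mathbf{U}/\mathbf{U}^{(i+1)}$, hence abelian; combined with the assumption that $\mathbf{U}$ is $F$-split unipotent, $V_i$ is $F$-isomorphic to a vector group $\mathbb{G}_a^{d_i}$ carrying an algebraic $\mathbf{S}$-action.

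Next I would exploit the $F$-splitness of $\mathbf{S}$ to decompose each $V_i$ into $F$-rational weight subspaces $V_i = \bigoplus_{\alpha} V_{i,\alpha}$, where each $V_{i,\alpha}$ is a closed $F$-subgroup on which $\mathbf{S}$ acts by the character $\alpha$. The weights appearing in $V_i$ are precisely the weights of $\mathbf{S}$ on $\mathrm{Lie}(V_i)$, and these all lie in $\Phi(\mathbf{S},\mathbf{U})$; by hypothesis none of them is zero. Ordering the weights appearing in $V_i$ as $\alpha_{i,1},\ldots,\alpha_{i,r_i}$, I would refine the $i$-th step of the central series by defining subgroups $\mathbf{U}^{(i)} = \mathbf{U}^{(i)}_0 \supseteq \mathbf{U}^{(i)}_1 \supseteq \cdots \supseteq \mathbf{U}^{(i)}_{r_i} = \mathbf{U}^{(i+1)}$ as preimages under the projection $\mathbf{U}^{(i)} \twoheadrightarrow V_i$ of the partial direct sums $\bigoplus_{k>j} V_{i,\alpha_{i,k}}$.

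The crucial observation is that because $V_i$ is central in $\mathbf{U}/\mathbf{U}^{(i+1)}$, any $F$-subgroup of $V_i$ lifts to an $F$-subgroup of $\mathbf{U}^{(i)}$ that is automatically normal in $\mathbf{U}$, and $\mathbf{S}$-invariant subgroups of $V_i$ lift to $\mathbf{S}$-invariant normal subgroups. The successive refined quotients are then precisely the weight pieces $\mathbf{U}^{(i)}_{j-1}/\mathbf{U}^{(i)}_{j} \cong V_{i,\alpha_{i,j}}$, each of which is a vector group carrying an $\mathbf{S}$-action by the single character $\alpha_{i,j}$, exactly as required. Concatenating these intra-step refinements over all $i$ and re-indexing yields the desired $\mathbf{S}$-invariant central series for $\mathbf{U}$.

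The main obstacle I anticipate is purely bookkeeping on $F$-rationality: one must verify that each $\mathbf{U}^{(i)}$ is $F$-defined, that the $\mathbf{S}$-weight decomposition of every $V_i$ is over $F$ (which uses $F$-splitness of $\mathbf{S}$ applied to the $F$-vector group $V_i$), and that preimages under $\mathbf{U}^{(i)} \twoheadrightarrow V_i$ of $F$-subspaces are $F$-subgroups of $\mathbf{U}$. The assumption $0 \notin \Phi(\mathbf{S},\mathbf{U})$ is used to guarantee that every character $\alpha_{i,j}$ through which $\mathbf{S}$ acts on a refined subquotient is nonzero, so no trivial-weight piece appears in the resulting series, which will matter when the lemma is applied in the congruence-character analysis that follows.
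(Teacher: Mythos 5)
Your construction is sound in characteristic zero, where it gives a slightly different (top-down) route from the paper's: you refine the lower central series by weight-space decompositions, whereas the paper argues bottom-up, using the hypothesis $0 \notin \Phi(\mathbf{S},\mathbf{U})$ together with the proof of \cite[Theorem~3.3.11]{CGP} to extract a central vector subgroup with a single weight and then inducting on $\dim \mathbf{U}/\mathbf{U}_1$. In characteristic zero the two are essentially interchangeable, since there unipotent groups are automatically split, commutative unipotent groups are vector groups, and algebraic torus actions on vector groups are linear and hence diagonalizable over $F$.

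However, the paper's Standing Assumptions allow $F$ to be a global function field of characteristic $p>2$, and there your argument has a genuine gap at exactly the step you flagged as ``bookkeeping.'' In characteristic $p$ a commutative $F$-split unipotent group need not be a vector group: the Witt group $\mathbf{W}_2$ is split (it has a filtration with two $\mathbb{G}_a$ quotients) but is not isomorphic to $\mathbb{G}_a^2$, and it carries a $\mathbb{G}_m$-action $(x_0,x_1)\mapsto(tx_0,t^p x_1)$ by group automorphisms whose Lie-algebra weights are $1$ and $p$, both nonzero. For $\mathbf{U}=\mathbf{W}_2$ your lower central series is just $\mathbf{W}_2\supseteq\{e\}$, the unique subquotient is $\mathbf{W}_2$ itself, and it admits \emph{no} direct-sum decomposition into weight subgroups (the Teichm\"uller section is not additive); only an $\mathbf{S}$-stable \emph{filtration} by vector groups exists. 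So the claims ``$V_i$ is $F$-isomorphic to a vector group'' and ``$V_i=\bigoplus_\alpha V_{i,\alpha}$'' both fail in positive characteristic, and the hypothesis $0\notin\Phi(\mathbf{S},\mathbf{U})$ does not rescue them. This is precisely the content of the CGP result the paper invokes: under that hypothesis one can produce a central $F$-subgroup that is $\mathbf{S}$-equivariantly a vector group, and one must then pass to the quotient and induct, rather than decompose fixed subquotients. To repair your proof you would need to replace the weight-space decomposition of each $V_i$ by the CGP filtration (or restrict to characteristic zero), at which point it collapses into the paper's argument.
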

\begin{proof}

By the assumption that $0 \notin \Phi(\mathbf{S},\mathbf{U})$, we may find a central $F$-subgroup $\mathbf{U}_1$ that is $\mathbf{S}$-equivariantly $F$-isomorphic to a vector group with a diagonal linear action of $\mathbf{S}$, as explained in the proof of \cite[Theorem~3.3.11]{CGP}. By passing to a smaller subgroup, we may further assume without loss of generality that $\mathbf{S}$ acts on $\mathbf{U}_1$ with a single weight. 

Next, we consider the quotient $F$-group $\mathbf{U} / \mathbf{U}_1$ (see \cite[Theorem~6.8]{Borel} for the construction of an $F$-group structure on quotients). Its Lie algebra can be identified $\mathbf{S}$-equivariantly with a quotient of the Lie algebra of $\mathbf{U}$. It follows that $0 \notin \Phi(\mathbf{S},\mathbf{U}/
\mathbf{U}_1)$. We can now repeat the above argument.
Proceeding by induction on the dimension, we build the entire central series $\mathbf{U}_i$ as required.
\end{proof}

Consider an algebraic semidirect product group $\mathbf{A} \ltimes \mathbf{U}$, where $\mathbf{A}$ is an $F$-split torus and $\mathbf{U}$ is $F$-split unipotent. We would like to characterize congruence characters of $\mathbf{U}(R)$ in terms of the $\mathbf{A}(R)$-action on $\mathrm{Ch}(\mathbf{U}(R))$. Assume that $0 \notin \Phi(\mathbf A, \mathbf U)$ and equip $\mathbf U$ with a central series $\mathbf U_i$ as in Lemma \ref{lemma: central series}. Fix an element $a \in \mathbf{A}(R)$ such that no non-trivial power of $a$ lies in the kernel of any weight from $\Phi(\mathbf A,\mathbf U)$. For example, any element $a \in \mathbf A$ generating a Zariski-dense subgroup will have this property. Denote  $T = \left<a\right>$ so that $T$ is a cyclic subgroup of $\mathbf A(R)$.

\begin{remark} \label{good a exists}
Under our Standing Assumptions \S\ref{standing assumption} the torus $\mathbf{A}$ is one-dimensional, so any infinite order element $a \in \mathbf A(R)$ generates a Zariski-dense subgroup (see Remark \ref{rem:Dirichlet}). Generally speaking, an element generating a Zariski-dense subgroup may not exist even if $|S| \ge 2$. However, an element $a \in \mathbf A(R)$ whose powers do not lie in the kernel of any weight does always exist if $\abs{S} \geq 2$.
Indeed, since the collection of weights $\Phi(\mathbf A, \mathbf U)$ is finite, there exists a one-dimensional sub-torus $\mathbf S$ not contained in the kernel of any weight. Since $\mathbf S$ is connected and one-dimensional, any proper Zariski-closed subgroup of $\mathbf S$ is finite. In particular $|\mathbf S \cap \ker \omega| < \infty$  for every $\omega \in \Phi(\mathbf A, \mathbf U)$. Any infinite order element  $a \in \mathbf S (R)$ is as required.
 \end{remark}

\begin{proposition}
\label{prop:rational if and only of finite orbit}
Let $\varphi \in \Ch{\mathbf{U}(R)}$ be a character. Let $i(\varphi) \in \N \cup \{0\}$ be the maximal index such that the restriction $\varphi_{|\mathbf U_{i(\varphi) }(R)}$ is a congruence trace, and let $\mathcal{I} \lhd R$ be such that $\mathbf U_{i(\varphi) }(\mathcal{I}) \le \ker \varphi$. 
\begin{enumerate}
    \item The following three conditions are equivalent:
    \begin{enumerate}
    \item $\mathbf U_{i(\varphi)} = \mathbf U$, 
    \item $\varphi$ is a congruence character, and
    \item the orbit of $\varphi$ under the dual conjugation action of $T$ is finite.
    \end{enumerate}
    \item If $\mathbf U_{i(\varphi)} \lneq \mathbf U$ then the restriction $\varphi_{|\mathbf{U}_{i(\varphi) +1}(\mathcal{I})}$ is a multiplicative character whose  orbit under the dual conjugation action of $T$ is infinite.
\end{enumerate}
\end{proposition}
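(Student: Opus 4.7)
The equivalence of (a) and (b) is immediate from the definition of $i(\varphi)$. For (b) $\Rightarrow$ (c) I would use that $R/\mathcal I$ is finite for every non-zero ideal $\mathcal I$, so $\mathbf U(R)/\mathbf U(\mathcal I)$ is finite; any character factoring through this quotient has finite $T$-orbit. The remaining implication (c) $\Rightarrow$ (a) will then follow from part (2) by contraposition, since the restriction map $\mathrm{Ch}(\mathbf U(R)) \to \mathrm{Ch}(\mathbf U_{j+1}(\mathcal I))$ is $T$-equivariant. So the real work lies in part (2).

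Set $j = i(\varphi)$ and $\chi := \varphi|_{\mathbf U_{j+1}(\mathcal I)}$. First I would show $\chi$ is a multiplicative character. Since conjugation by an element of $\mathbf U(R)$ preserves the property of being $\equiv e \pmod{\mathcal I}$, and since $[\mathbf U, \mathbf U_{j+1}] \subseteq \mathbf U_j$ by the central series property, a short calculation would yield
\[
[\mathbf U(R), \mathbf U_{j+1}(\mathcal I)] \subseteq \mathbf U_j(\mathcal I) \subseteq \ker\varphi,
\]
showing that the image of $\mathbf U_{j+1}(\mathcal I)$ is central in $\mathbf U(R)/\ker\varphi$. A Cauchy--Schwarz argument combined with the conjugation-invariance of $\varphi$ gives $\ker\varphi = \ker\pi_\varphi$, so the GNS factor representation descends to this quotient; Schur's lemma for characters (Lemma \ref{lemma:schur}) then forces $\pi_\varphi$ to act on $\mathbf U_{j+1}(\mathcal I)$ by scalars, making $\chi$ multiplicative.

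Next I would show $\chi$ has infinite $T$-orbit, by contradiction. Supposing $a^k \cdot \chi = \chi$ for some $k \geq 1$, the character $\chi$ descends through the abelian quotient $\mathbf U_{j+1}(\mathcal I)/\mathbf U_j(\mathcal I)$, whose image $W$ in the vector group $V = \mathbf U_{j+1}/\mathbf U_j$ contains a congruence subgroup $V(\mathcal J) \subseteq W$ by Lemma \ref{label:modding out unipotents}. The torus $\mathbf A$ acts on $V$ through the single weight $\omega$, so the invariance condition translates to
\[
\chi\bigl((\omega(a)^{-k} - 1)\cdot w\bigr) = 1 \quad \text{for every } w \in W.
\]
By Lemma \ref{lemma:arithmetic torus} (applied to both $\omega$ and $-\omega$) one has $\omega(a) \in R^\times$, and by our choice of $a$ the scalar $\mu := \omega(a)^{-k} - 1$ is a non-zero element of $R$. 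Hence $V(\mu \mathcal J) \subseteq \ker\chi$, which upon lifting implies $\chi$ vanishes on the congruence subgroup $\mathbf U_{j+1}(\mathcal I \cap \mu \mathcal J)$. But then $\varphi|_{\mathbf U_{j+1}(R)}$ would be a congruence trace, contradicting the maximality of $j = i(\varphi)$.

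The main obstacle, I expect, is the bookkeeping of congruence ideals during the passage from $\mathbf U_{j+1}(\mathcal I)$ to its image in the vector quotient $V$ and back. Lemma \ref{label:modding out unipotents} is the essential input which guarantees the image $W$ is large enough to detect congruence conditions on $\chi$, and the specific choice of $a$ (no power lying in the kernel of any weight) is what provides the non-vanishing of $\mu$ needed to close the argument.
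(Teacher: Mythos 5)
Your proposal is correct and follows essentially the same route as the paper: the equivalence $(a)\Leftrightarrow(b)$ and the implication $(b)\Rightarrow(c)$ via the finite congruence quotient, then the core argument of centrality of $\mathbf U_{j+1}(\mathcal I)$ modulo $\mathbf U_j(\mathcal I)$ plus Schur's lemma for multiplicativity, and the single-weight scalar action $\beta=\omega(a)^{\pm k}\in R^\times$ with $\beta\neq 1$ to rule out a finite orbit (the paper merely proves $(c)\Rightarrow(a)$ directly and extracts part (2) as a byproduct, whereas you prove part (2) first and contrapose, which is a harmless reorganization). The only imprecision is the final lifting step: the congruence subgroup of $\mathbf U_{j+1}(R)$ mapping into $V(\mu\mathcal J)$ is not literally $\mathbf U_{j+1}(\mathcal I\cap\mu\mathcal J)$ but is supplied by Lemma \ref{lemma:congruence under F-morphisms} applied to the quotient morphism, exactly as the paper does.
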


\begin{proof}
First consider part (1) of the statement. The implication $(a) \Rightarrow (b)$ is immediate from the definitions.

Let us consider the implication $(b) \Rightarrow (c)$. Assume that $\varphi$ is a congruence character. There exists a non-zero ideal $\mathcal{J} \lhd R$ such that $\varphi$ factors through the finite quotient group $\mathbf U(R) /  \mathbf U(\mathcal{J})$. Hence $\varphi$ is fixed by a certain power of the element $a$, that fixes pointwise the entire finite Thoma dual of this finite quotient.

The main part of the proposition is the implication $(c) \Rightarrow (a)$. Assume towards contradiction that the orbit of $\varphi$ under the dual conjugation action of $T$ is finite but $\varphi$ is not a congruence character. In particular, this means that $\mathbf U_{i(\varphi)} \lneq \mathbf U$. To ease our notations, we set $i = i(\varphi)$.

The subquotient $\mathbf{U}_{i+1} (\mathcal I) / \mathbf{U}_{i}(\mathcal{I})$ is central in the quotient group $\mathbf{U}( R) / \mathbf{U}_i(\mathcal I)$. Indeed
$$ \left[\mathbf{U}( R), \mathbf{U}_{i+1} (\mathcal I) \right] \subset \mathbf {U}_i(R) \cap \mathbf U(\mathcal{I}) = \mathbf U_i(\mathcal{I}).$$
By Schur's lemma for characters (Lemma \ref{lemma:schur}), the restriction of the character $\varphi$ to $\mathbf{U}_{i+1} (\mathcal I)$ is multiplicative. By our assumption, there exists some power $n \in \N$ such that conjugation by the element $a^n \in T$ fixes the restriction $\varphi_{|\mathbf{U}_{i+1} (\mathcal{I})} \in \Ch{\mathbf{U}_{i+1}(\mathcal{I})}$.

There is an $\mathbf A$-equivariant $F$-isomorphism $\Psi : \mathbf U_{i+1}/\mathbf U_i \to \mathbf V$ where $\mathbf V$ is some additive vector $F$-group on which $\mathbf{A}$ acts via a single weight $\omega \in \Phi(\mathbf A,\mathbf V) \subset \Phi(\mathbf A, \mathbf U)$; see Lemma \ref{lemma: central series}. Hence, the element $a^n$ acts on the vector group $\mathbf V$ linearly by some scalar $\beta \in F$. This scalar must satisfy $\beta \in R $ by Lemma \ref{lemma:arithmetic torus} as well as $\beta \neq 1$ by our assumption on the element $a$. 

Consider the $F$-rational quotient map $\overline{\Psi}:\mathbf U_{i+1} \to \mathbf V$. According to Lemma \ref{label:modding out unipotents} there is some non-zero ideal $\mathcal L \lhd R$ such that $\mathbf V(\mathcal{L}) \subset \overline{\Psi}(\mathbf U_{i+1} (\mathcal I))$. The character $\varphi$ descends to a multiplicative character $\psi$ of the group $\mathbf V(\mathcal{L})$. We now claim that $\mathbf V((\beta-1)\mathcal L) \subset \ker \psi$. Recall that $\varphi^{a^n} = \varphi$. Hence, for every element $v \in \mathbf V(\mathcal{L})$ we have
$$ \psi(v^{a^n}) = \psi(v) \quad \Rightarrow \quad \psi(\beta v) \psi(v)^{-1} = 1\quad \Rightarrow \quad \psi((\beta-1)v) = 1.$$
This establishes the claim.  Lemma \ref{lemma:congruence under F-morphisms} allows us to find  a non-zero ideal $\mathcal{J} \lhd R$ such that $\overline{\Psi}(\mathbf U_{i+1}(\mathcal J)) \subset \mathbf  V((\beta-1)\mathcal L)$. In other words $\mathbf{U}_{i+1}(\mathcal{J}) \le \ker \varphi$, namely the restriction of $\varphi$ to the subgroup $\mathbf U_{i+1}(R)$ is a congruence trace. This stands in contradiction to the choice of the index $i = i(\varphi)$.

Lastly, we address part (2) of the statement. Assume that $\mathbf{U}_{i(\varphi)} \lneq \mathbf U$. The above proof of the implication $(c)\Rightarrow(a)$ shows that the restriction of $\varphi$ to $\mathbf U_{i+1}(\mathcal{I})$ is a multiplicative character. Further, this restriction must have an infinite orbit under the dual action of $T$, for otherwise $\varphi$ would be a congruence character of $\mathbf{U}_{i+1}$.
\end{proof}

\subsection{Induced characters}

We add some notations in order to emphasize arithmetic rather than algebraic groups. Specifically\footnote{The same convention will apply with respect to other letters denoting algebraic groups, namely $A = \mathbf A(R), U = \mathbf{U}(R)$, etc.}, given an algebraic $F$-group $\mathbf{G}$, we will denote by $G$ the corresponding arithmetic group $\mathbf{G}(R)$, and by $G(\mathcal I)$ the congruence subgroup $\mathbf{G}(\mathcal{I})$ at each non-zero ideal $\mathcal I \subseteq R$.

Let   $\mathbf{U}$ be a connected $F$-split unipotent algebraic group and $\mathbf{A}$ be  an $F$-split torus acting on $\mathbf{U}$ algebraically such that $0 \notin \Phi(\mathbf A,\mathbf U)$. We fix an element $a \in A = \mathbf{A}(R)$, such that no power of $a$ lies in the kernel of any weight  $\alpha \in \Phi(\mathbf A,\mathbf U)$. Denote $T = \left<a\right>$ so that $T \le A$. Consider the semidirect product group $M = T \ltimes U$ where $U = \mathbf U(R)$.  Our goal is to study characters of $M$.

\begin{proposition}
\label{prop:characters of AU}
 Let $\varphi \in \Ch{M}$ be a character, where $M = T \ltimes U$. Then either
\begin{enumerate}[label=(\alph*)]
\item $\varphi(a^i u) = 0 $ for all $i \in \Z\setminus\{0\}$ and all $u \in U$, or
\item $\varphi_{|U}$ is a congruence trace.
\end{enumerate}
These two possibilities are mutually exclusive. In particular, characters of type (a) are infinite-dimensional while characters of type (b) are finite-dimensional.
\end{proposition}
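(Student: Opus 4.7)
The plan is to analyze $\varphi$ via its GNS triple $(\mathcal{H}_\varphi, \pi_\varphi, \xi_\varphi)$ and the factor $\mathcal{M} = \pi_\varphi(M)''$ (which is a factor by Theorem \ref{thm:Thoma}, since $\varphi$ is a character). Set $\mathcal{N} = \pi_\varphi(U)''$ and $\alpha = \mathrm{Ad}(\pi_\varphi(a))$, a trace-preserving automorphism of $\mathcal{N}$. Let $\tau$ be the trace on $\mathcal{M}$ and $E \colon \mathcal{M} \to \mathcal{N}$ the unique $\tau$-preserving conditional expectation. Since $T$ normalizes $U$ and $\varphi$ is conjugation-invariant, the Bochner transform $\mu \in \mathrm{Prob}(\Ch{U})$ of $\varphi|_U$ is $T$-invariant; the $\alpha$-action on $Z(\mathcal{N}) \cong L^\infty(\Ch{U}, \mu)$ corresponds exactly to the dual action of $T$ on $\Ch{U}$.

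The next step is to observe that $\mu$ is $T$-ergodic. Indeed, any $\alpha$-fixed element of $Z(\mathcal{N})$ commutes with both $\mathcal{N}$ and $\pi_\varphi(a)$, and since $M = T \ltimes U$ the factor $\mathcal{M}$ is generated by these; thus $Z(\mathcal{N})^\alpha \subseteq Z(\mathcal{M}) = \mathbb{C}$. For a $\mathbb{Z}$-action, ergodicity yields a dichotomy: either $\mu$ is supported on a single finite $T$-orbit, or $T$ acts aperiodically with $\mu$-a.e.\ orbit infinite. These two cases are mutually exclusive, and will correspond respectively to the alternatives (b) and (a) of the statement.

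In the finite-orbit case, Proposition \ref{prop:rational if and only of finite orbit} identifies each character in the supporting orbit $\{\psi_1,\ldots,\psi_n\}$ as a congruence character with some non-zero congruence ideal $\mathcal{I}_j \lhd R$; the intersection $\mathcal{I} = \bigcap_j \mathcal{I}_j$ is non-zero, and $\mathbf{U}(\mathcal{I}) \subseteq \ker \varphi|_U$. Hence $\varphi|_U$ is a congruence trace. Moreover $\mathcal{N}$ is a finite direct sum of finite-dimensional matrix factors cyclically permuted by $\pi_\varphi(a)$, and $\pi_\varphi(a^n)$ implements an inner automorphism on each summand; an elementary counting argument then gives $\dim \mathcal{M} \leq n \dim \mathcal{N} < \infty$, showing $\varphi$ is finite-dimensional.

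In the aperiodic case, $\alpha^i$ acts freely on $(\Ch{U},\mu)$ for every $i \neq 0$, and the target is to establish $\varphi(a^i u) = 0$ for all such $i$ and all $u \in U$. Using the $\mathcal{N}$-bimodularity of $E$, this reduces to showing $E(\pi_\varphi(a^i)) = 0$, since
\[ \varphi(a^i u) = \tau\bigl(\pi_\varphi(a^i) \pi_\varphi(u)\bigr) = \tau\bigl(E(\pi_\varphi(a^i))\, \pi_\varphi(u)\bigr). \]
Setting $v = E(\pi_\varphi(a^i))$ and combining $E$-bimodularity with $\pi_\varphi(a^i) x = \alpha^i(x) \pi_\varphi(a^i)$ for $x \in \mathcal{N}$ gives the intertwining relation $v x = \alpha^i(x) v$. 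Taking adjoints yields the dual relation $v^* y = \alpha^{-i}(y) v^*$, from which a direct computation shows $v v^* \in Z(\mathcal{N})$. Specializing the intertwining to $x \in Z(\mathcal{N})$ and comparing $x v v^* = v x v^* = \alpha^i(x) v v^*$ yields $(x - \alpha^i(x)) v v^* = 0$ for every $x \in Z(\mathcal{N})$. If $v \neq 0$, the non-zero central projection $q = \mathrm{supp}(v v^*) \in Z(\mathcal{N})$ would satisfy $\alpha^i|_{Z(\mathcal{N}) q} = \mathrm{id}$, contradicting freeness. Hence $v = 0$. Finally, non-atomicity of $\mu$ makes $Z(\mathcal{N})$ and therefore $\mathcal{M}$ infinite-dimensional. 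The main obstacle in the argument is the vanishing of $E(\pi_\varphi(a^i))$: the polar-decomposition/intertwiner analysis must carefully convert freeness of the $T$-action on $Z(\mathcal{N})$ into proper outerness of $\alpha^i$ on $\mathcal{N}$, and is the only step that uses more than the abstract factor structure of $\mathcal{M}$.
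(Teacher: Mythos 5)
Your proof is correct in substance but takes a genuinely different route from the paper's for the main case. The paper proceeds via the invariant $i_\varphi$ of Proposition \ref{prop:rational if and only of finite orbit}: when $\mathbf U_{i_\varphi}\lneq\mathbf U$ it descends to the Pontryagin dual of the abelian subquotient $U_{i_\varphi+1}(\mathcal I)/U_{i_\varphi}(\mathcal I)$, where the $T$-action is essentially free, and then quotes an external vanishing lemma for traces over free actions. You instead split directly according to whether the ergodic $T$-action on $(\Ch{U},\mu_\varphi)$ is supported on one finite orbit or is aperiodic, and in the aperiodic case you run a self-contained crossed-product argument: $\varphi(a^iu)=\tau(E(\pi_\varphi(a^i))\pi_\varphi(u))$, the intertwining relation for $v=E(\pi_\varphi(a^i))$, and freeness on $Z(\mathcal N)$ forcing $v=0$. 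The two case divisions agree precisely because of Proposition \ref{prop:rational if and only of finite orbit}(1), which you still need (and use) to convert ``finite orbit'' into ``congruence.'' Your approach buys independence from the external lemma and from the explicit subquotient; the paper's buys a shorter argument given that lemma. Two small points. First, the displayed manipulation ``$xvv^*=vxv^*$'' is not quite right as written, but the conclusion $(x-\alpha^i(x))vv^*=0$ for $x\in Z(\mathcal N)$ is immediate anyway, since $v\in\mathcal N$ commutes with $Z(\mathcal N)$, so $xv=vx=\alpha^i(x)v$ directly. Second, and more seriously, the finite-dimensionality claim in case (b) is under-justified: from $\alpha^n$ being inner on $\mathcal N$ you get $\pi_\varphi(a^n)w^*\in\mathcal N'\cap\mathcal M$, but that relative commutant could a priori carry a diffuse part, so the asserted bound $\dim\mathcal M\le n\dim\mathcal N$ does not follow from counting alone --- you must invoke factoriality of $\mathcal M$ to kill $\mathcal N'\cap\mathcal M$, and that requires an argument. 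The paper sidesteps this entirely: $\varphi$ factors through $M/\ker\varphi$, which is virtually abelian because $U(\mathcal I)\le\ker\varphi$ has finite index in $U$, and every character of a virtually abelian group is finite-dimensional. I recommend replacing your counting step with that observation.
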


\begin{proof}
The Bochner transform of the restriction $\varphi_{|U}$ is an ergodic $T$-invariant probability measure $\mu_\varphi \in \mathrm{Prob}(\Ch{U})$ (for this fact, as well as a general discussion on relative traces, we refer the reader e.g. to \cite[\S2]{carey1984characters}). 

To each character $\psi \in \Ch{U}$ we have associated in Proposition \ref{prop:rational if and only of finite orbit} an invariant $i(\psi) \in \N \cup \{0\}$. This invariant is preserved under the dual action of $T$, in the sense that $i(\psi) = i(\psi^a)$ for all $\psi \in \Ch{U}$. By the ergodicity of the measure $\mu_\varphi$,  this invariant must be $\mu_\varphi$-almost everywhere constant, so that there is some $i_\varphi \in \N \cup \{0\}$ with $i(\psi) = i_\varphi$  for $\mu_\varphi$-almost every $\psi \in \Ch{U}$. 
Furthermore, for any 
ideal $\mathcal{I} \lhd R$, the subset of $\Ch{U}$ consisting of characters $\psi$ satisfying $U_{i_\varphi}(\mathcal{I}) \le \ker \psi$ is invariant under the dual action of $T$. Since $R$ has only countably many ideals, there exists a non-zero ideal $\mathcal{I}$ such that for $\mu_\varphi$-almost every character $\psi$ satisfies $U_{i_\varphi}(\mathcal{I}) \le \ker \psi$. Hence $U_{i_\varphi}(\mathcal{I}) \le \ker \varphi$ as well. 

Our analysis of the character $\varphi$ depends on the invariant $i_\varphi$. 
\begin{itemize}
\item 
If $U_{i_\varphi} \lneq U$ then  Proposition \ref{prop:rational if and only of finite orbit} implies that  $\mu_\varphi$-almost every character  $\psi \in \Ch{U}$ restricts to a multiplicative character of $U_{i_\varphi+1}$ whose orbit under the dual action of $T$ is infinite. In other words, the resulting $T$-action on the Pontryagin dual of $U_{i_\varphi+1}(\mathcal{I})/U_{i_\varphi}(\mathcal{I})$ equipped with the probability measure coming from Bochner transform of the restriction of $\varphi$ is essentially free.
Therefore $\varphi(a^i) = 0 $ for all $i \in \Z \setminus \{0\}$ by \cite[Lemma 3.5]{levit2024characters}. In fact, as the group $U$ acts trivially on $U_{i_\varphi+1}(\mathcal{I})/U_{i_\varphi}(\mathcal{I})$, the same applies to any element of the form $h = a^i u$ with $i \in \Z \setminus \{0\}$ and $u \in U$. We arrive at possibility (a).

\item If $U_{i_\varphi} = U$ then the restriction $\varphi_{|U}$ is a congruence trace. This corresponds to possibility (b).
\end{itemize}

It remains to verify that the two possibilities (a) and (b) are mutually exclusive. In case (a) the character $\varphi$ vanishes outside of the infinite-index subgroup $U$. In particular, its GNS representation is induced from the subgroup $U$. As such, $\varphi$ is infinite-dimensional. In case (b), we have $\left[U: U \cap \ker \varphi \right] < \infty$. Certainly $\varphi$ factors through the quotient group $K = M/\ker \varphi$. The group $K$ is cyclic-by-finite. Every character of a virtually abelian group is finite-dimensional. Hence $\varphi$ is finite-dimensional.
\end{proof}

Proposition \ref {prop:characters of AU} shows that the character space $\Ch{M}$ decomposes as a disjoint union of two Borel sets $$\Ch{M}=\Ch{M}_{\mathrm{ind}} \bigsqcup \Ch{M}_{\mathrm{cong}},$$
corresponding respectively to characters of type (a) and (b) as above. It will be convenient to introduce the notations
$$ \Tr{M}_\mathrm{ind} = \{ \varphi \in \Tr{M} \: : \: \mu_\varphi(\Ch{M}_\mathrm{ind}) = 1 \}$$
and likewise for $\Tr{M}_\mathrm{cong}$.
In light of Proposition \ref{prop:disj traces and gns}, this yields a canonical decomposition of any trace $\varphi \in \Tr{M}$ into a convex combination of two disjoint  traces:
$$\varphi = t\varphi_{\mathrm{ind}} + (1-t)\varphi_{\mathrm{cong}}; \quad \varphi_{\mathrm{ind}} \in \Tr{M}_{\mathrm{ind}}, \varphi_{\mathrm{cong}} \in \Tr{M}_{\mathrm{cong}},t\in\left[0,1\right].$$

 \begin{remark}
A trace $\varphi \in \Tr{M}_\mathrm{cong}$ does not necessarily restrict to a congruence trace on $U$. Rather, this restriction is  a convex combination of at most countably many congruence characters of $U$. 
 \end{remark}

\begin{proposition} \label{prop: mixing is uniform in U}
Let $\varphi \in \Tr{M}$ be a trace. If the restriction of its associated GNS representation $\pi_\varphi$ to the subgroup $T$ is mixing then
$$ \lim_{i \to \infty} \sup_{u \in U} \;\abs{\varphi \, (a^i u)} = 0.$$
\end{proposition}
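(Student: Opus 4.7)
The plan is to use Proposition~\ref{prop:characters of AU} and the canonical convex decomposition $\varphi = t\varphi_\mathrm{ind} + (1-t)\varphi_\mathrm{cong}$ into disjoint traces of ``induced'' and ``congruence'' types discussed immediately after it. By Proposition~\ref{prop:characters of AU}(a), every character $\psi$ in the Bochner transform of $\varphi_\mathrm{ind}$ satisfies $\psi(a^i u) = 0$ for all $i \neq 0$ and $u \in U$; integrating against the Bochner transform yields $\varphi_\mathrm{ind}(a^i u) = 0$ identically on this set. Hence for $i \neq 0$ we have $\varphi(a^i u) = (1-t)\varphi_\mathrm{cong}(a^i u)$, and it suffices to prove uniform decay for $\varphi_\mathrm{cong}$ alone.

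To transfer the hypothesis, I would invoke Proposition~\ref{prop:disj traces and gns}: the GNS representation splits orthogonally as $\pi_\varphi = \pi_{\varphi_\mathrm{ind}} \oplus \pi_{\varphi_\mathrm{cong}}$, so $\pi_{\varphi_\mathrm{cong}}|_T$ inherits the mixing property. Setting $\cN = \pi_{\varphi_\mathrm{cong}}(U)'' \subset \cM_{\varphi_\mathrm{cong}}$ and letting $E \colon \cM_{\varphi_\mathrm{cong}} \to \cN$ be the trace-preserving conditional expectation, the fact that $\pi(u) \in \cN$ together with the bimodule property of $E$ and polar decomposition in the tracial setting gives
$$ \sup_{u \in U}\,|\varphi_\mathrm{cong}(a^i u)| \;=\; \sup_{u \in U}\,|\tau(E(\pi(a^i))\,\pi(u))| \;\le\; \|E(\pi(a^i))\|_1 \;\le\; \|E(\pi(a^i))\|_2. $$
The task therefore reduces to showing that $\|E(\pi(a^i))\|_2 \to 0$ as $|i| \to \infty$.

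A useful structural remark is that $E$ is equivariant under conjugation by $\pi(a)$, because $T$ normalizes $U$ and the trace-preserving conditional expectation onto an $\Ad(\pi(a))$-invariant subalgebra is unique. Since $\pi(a^i)$ commutes with $\pi(a)$, this forces $E(\pi(a^i))$ into the relative commutant $\cN \cap \{\pi(a)\}'$. I would then combine this rigidity with the type-I structure of $\cM_{\varphi_\mathrm{cong}}$---coming from the fact that its Bochner transform is supported on finite-dimensional characters of $M$---and the mixing of $\pi_{\varphi_\mathrm{cong}}|_T$ to upgrade the weak $L^2$-decay of $\pi(a^i)$ to the norm decay of the projections $E(\pi(a^i))$. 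The main obstacle is precisely this final step: weak $L^2$-convergence of $\pi(a^i)$ does not imply norm convergence of $E(\pi(a^i))$ in general, so one must exploit the specific integral-over-finite-dimensional-characters structure of $\varphi_\mathrm{cong}$ together with the commutation constraint above to force the desired uniform rate.
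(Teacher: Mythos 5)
There is a genuine gap, and you have correctly located it yourself: everything up to the reduction to $\varphi_{\mathrm{cong}}$ matches the paper (the decomposition $\varphi = t\varphi_{\mathrm{ind}}+(1-t)\varphi_{\mathrm{cong}}$, the vanishing of $\varphi_{\mathrm{ind}}$ off $U$ via Proposition~\ref{prop:characters of AU}(a), and the transfer of mixing to the summand via Proposition~\ref{prop:disj traces and gns}), but the final step is missing, and the route you chose makes it strictly harder than it needs to be. Your bound $\sup_u|\varphi_{\mathrm{cong}}(a^iu)|\le\|E(\pi(a^i))\|_2$ is correct, but mixing of $\pi|_T$ only gives that $\pi(a^i)\to 0$ weakly in $L^2(\cM_{\varphi_{\mathrm{cong}}},\tau)$, hence that $E(\pi(a^i))\to 0$ weakly in $L^2(\cN)$; the supremum $\sup_u|\tau(\pi(a^i)\pi(u))|$ is a supremum over an infinite set, and the $\|\cdot\|_2$-decay you need is a uniform statement that does not follow. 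The structural observations you offer (equivariance of $E$ under $\mathrm{Ad}(\pi(a))$, the relative commutant, type-I-ness) do not close this; in particular you never use the one property of congruence traces that actually produces uniformity.

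That property is: congruence subgroups $U(\mathcal I)$ have \emph{finite index} in $U$, and since $R$ has only countably many nonzero ideals, $\cH_{\mathrm{cong}}$ is the closed increasing union of $M$-invariant subspaces $\cK_n$ on which $U$ acts through the finite quotients $U/U(\mathcal I_n)$. This is how the paper finishes: given $\varepsilon>0$, approximate the cyclic vector $\xi_{\mathrm{cong}}$ within $\varepsilon$ by some $w_\varepsilon\in\cK_n$; then the orbit $\pi(U)w_\varepsilon$ is a \emph{finite} set of vectors, so
$\sup_{u\in U}|\varphi_{\mathrm{cong}}(a^iu)|\le\sup_{u}|\langle\pi(a^i)\pi(u)w_\varepsilon,w_\varepsilon\rangle|+2\varepsilon$
is, up to $2\varepsilon$, a maximum of finitely many matrix coefficients of $\pi(a^i)$, each of which tends to $0$ by mixing on $T$. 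This converts the infinite supremum into a finite one, which is exactly the uniformity your argument lacks. If you want to salvage your conditional-expectation framework, the same approximation shows $\limsup_i\|E(\pi(a^i))\|_2\le C\varepsilon$ for all $\varepsilon$, but at that point the expectation is doing no work; the direct argument is both shorter and complete.
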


\begin{proof}
Consider the decomposition $\varphi = t\varphi_{\mathrm{cong}} + (1-t)\varphi_{\mathrm{ind}}$ as a convex combitation of two disjoint traces, for some $t \in \left[0,1\right]$. 
Proposition \ref{prop:characters of AU} says that the trace $\varphi_{\mathrm{ind}}$ vanishes on $M\backslash U$, so it suffices to prove the statement for the trace $\varphi_{\mathrm{cong}}$. 

Let $(\mathcal{H}_\varphi, \pi_\varphi,\xi_\varphi)$ be the GNS-triple assocated to $\varphi$. According to Proposition \ref{prop:disj traces and gns} there is a Hilbert subspace 
$\mathcal{H}_\mathrm{cong} \le \mathcal{H}_\varphi$ corresponding to the trace $\varphi_{\mathrm{cong}}$ and a corresponding cyclic unit vector $\xi_{\mathrm{cong}} \in \mathcal{H}_\mathrm{cong}$ such that $\varphi_{\mathrm{cong}}(g) = \left<\pi_\varphi(g)\xi_{\mathrm{cong}},\xi_{\mathrm{cong}}\right>$ for all $g \in \Lambda$. Since the ring $R$ has countably many non-zero ideals, the Hilbert space $\mathcal{H}_\mathrm{cong}$ can be represented as the closure of an increasing union of invariant subspaces $\mathcal{K}_n$ such that $U(\mathcal{I}_n)\subseteq \ker(\pi_\varphi|_{\mathcal{K}_n})$ for some decreasing sequence of non-zero ideals $\mathcal{I}_n \lhd R$. Therefore, for every $ \varepsilon > 0$ there exists some $n \in \N$ and a vector $w_\varepsilon \in \mathcal{K}_n$ such that $\|\xi_{\mathrm{cong}} - w_\varepsilon\| \le \varepsilon $. The set of vectors  $\pi_\varphi(U)w_\varepsilon$ is finite. For any $u \in U$ and $i \in \Z$ we have:
    $$\sup_{u \in U} \;\abs{\varphi_{\mathrm{cong}} \, (a^i u)} = \sup_{u \in U} \abs{\left<\pi_\varphi(a^iu)\xi_{\mathrm{cong}},\xi_\mathrm{cong}\right> }\leq \sup_{u \in U} \abs{\left<\pi_\varphi(a^iu) w_\varepsilon,w_\varepsilon\right> }
    + 2\varepsilon.$$
Since the supremum on the right hand side is taken over a finite collection, and since the representation  $\pi_\varphi$ is mixing on $T$, we get
$$\limsup_{i\to\infty}\sup_{u \in U} \;\abs{\varphi_{\mathrm{cong}} \, (a^i u)} \leq 2\varepsilon.$$
As the constant $\varepsilon$ was arbitrary, the desired result follows.
\end{proof}

\begin{remark}
In the setting of Proposition     \ref{prop: mixing is uniform in U}, the restriction of the GNS representation $\pi_\varphi$ to the subgroup $T$ is a mixing representation if and only if the restriction of the trace $\varphi$ to the subgroup $T$ is a mixing trace. The non-trivial direction of this claim follows from the fact that the set $\pi_\varphi(g)\xi_\varphi$ is total in $\mathcal{H}_\varphi$, and each such vector generates an isomorphic copy of the cyclic GNS representation of $T$.
\end{remark}

\begin{remark}
Assume that $F$ is an algebraic number field and that $S$ consists only of the Archimedean valuations $\mathscr R_\infty$. In that case $R$ is a ring of algebraic integers, and the solvable group $M = TU$ studied in Proposition \ref{prop:characters of AU} is polycyclic. More generally, if $\mathbf P$ is a minimal $F$-parabolic subgroup of a semisimple algebraic $F$-group, then the solvable group $\mathbf P(R)$ is virtually polycyclic. The characters of virtually polycyclic groups were completely classified from a different perspective in the work \cite{levit2024characters}. We also mention \cite{bader2024charmenability}, which is another interesting work dealing with characters of solvable arithmetic groups.
\end{remark}

\section{A mixing phenomenon}\label{sec:6}
In this section we state and prove our key Theorem \ref{theorem:on characters of G admitting AU and AV}. We use the Standing Assumptions introduced in \S\ref{standing assumption}. 
Namely, $F$ is a global field with $\mathrm{char}(F)\neq 2$ and $S \subset \mathscr R$ is a finite set of valuations of $F$ that contains all the Archimedean ones $\mathscr R_\infty$. We crucially assume that $|S| \geq 2$, so that   the ring of $S$-integral elements $R=F(S)$   has a unit of infinite order  (see Remark \ref{rem:Dirichlet}).  

Let $\mathbf G$ be a connected, simply connected, absolutely almost simple and $F$-isotropic algebraic $F$-group. Fix a  minimal $F$-parabolic subgroup $\mathbf P$  of $\mathbf G$ with unipotent radical $\mathbf U$. Let $\mathbf A$ be a maximal $F$-split torus of $\mathbf P$, and denote by $\mathbf{Q}$ be the minimal parabolic opposite to $\mathbf{P}$ and containing $\mathbf{A}$. We further denote the unipotent radical of $\mathbf{Q}$ by $\mathbf{V}$. Fix some $F$-embedding of $\mathbf{G}$ into a matrix group $\mathbf{GL}_m$ for some $m \in \N$ and denote 
$$ \Gamma = \mathbf G(R), \quad A = \mathbf{A}(R),\quad U = \mathbf{U}(R), \quad V = \mathbf{V}(R).$$ 

Given a non-zero ideal  $\mathcal{I} \lhd R$ and an $F$-algebraic subgroup $\mathbf{H}$ of $\mathbf{G}$, we let $H(\mathcal I)$ denote the corresponding congruence subgroup (with the same convention for other letters denoting algebraic subgroups).  Let $a \in A$ be an element such that no power of $a$ is in the kernel of any root $\alpha  \in \Phi(\mathbf A,\mathbf G)$; see Remark \ref{good a exists}. We consider the groups
$$T = \left<a\right>, \quad M_U = T \ltimes U \quad \text{and} \quad M_V = T\ltimes V .$$
\begin{theorem}
\label{theorem:on characters of G admitting AU and AV}
Every weakly mixing trace $\varphi \in \Tr{\Gamma}$ satisfies 
$$ \lim_{i \to \infty} \sup_{u \in U} \; \varphi  (a^i u) = 0.$$
In particular, the restriction of the trace $\varphi$ to the subgroup $T$ is mixing.
\end{theorem}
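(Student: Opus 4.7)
The plan is to reduce Theorem \ref{theorem:on characters of G admitting AU and AV} to Proposition \ref{prop: mixing is uniform in U} applied with $M = M_U = T \ltimes U$. That proposition delivers the uniform decay $\sup_{u \in U} |\varphi(a^i u)| \to 0$ provided the GNS representation $\pi_\varphi$ restricted to $T$ is mixing, so the main task is to establish this mixing hypothesis from the assumption that $\varphi$ is weakly mixing as a trace on $\Gamma$. The secondary ``in particular'' assertion about $\varphi|_T$ being a mixing trace is then immediate by taking $u = e$ in the uniform bound.

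To prove that $\pi_\varphi|_T$ is a mixing representation, I would argue by contradiction: invoking Lemma \ref{lemma:unitary decomposes into mixing and non-mixing}, decompose $\pi_\varphi|_T = \pi_{\mathrm{mix}} \oplus \pi_{\mathrm{nm}}$ and aim to show $\pi_{\mathrm{nm}} = 0$. The opposite unipotent $V$ is the central tool here, because $a$ acts on $U$ and $V$ with opposite (contracting/expanding) signs along the roots in $\Phi(\mathbf{A},\mathbf{U})$ and $\Phi(\mathbf{A},\mathbf{V})$; this asymmetry should constrain the non-mixing subspace $\mathcal{H}_{\mathrm{nm}}$ enough to force $V$-invariant structure inside it. Concretely, I would restrict $\varphi$ to $M_V = T \ltimes V$ and apply Proposition \ref{prop:characters of AU}, decomposing $\varphi|_{M_V}$ as a convex combination $t \varphi_{\mathrm{cong}} + (1-t) \varphi_{\mathrm{ind}}$ of disjoint congruence and induced traces. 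The induced part vanishes off $V$ and is irrelevant to mixing along $T$ through the cyclic vector, so any obstruction to mixing must be fed by the congruence part, which factors through a finite quotient $V/V(\mathcal{I})$ for some non-zero ideal $\mathcal{I} \lhd R$.

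The main obstacle is the final step: promoting a non-trivial congruence part of $\varphi|_{M_V}$ to a genuine finite-dimensional subrepresentation of $\pi_\varphi$ as a $\Gamma$-representation, contradicting weak mixing. The classical Mautner argument — which in a real Lie group would use $a^{-n} v a^n \to e$ to upgrade $T$-invariance to $V$-invariance — is unavailable here, because in the discrete arithmetic group $\Gamma$ such conjugates never actually approach the identity. Instead, I would apply the generalized Bekka lemma (Lemma \ref{lemma:generalized bekka lemma}) with commutators built from elements of $V(\mathcal{I})$ and of $U(\mathcal{I})$, then use the rank-one Bruhat decomposition to conjugate $V(\mathcal{I})$-invariance through a Weyl representative to $U(\mathcal{I})$-invariance on the same subspace. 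Since $U(\mathcal{I})$ and $V(\mathcal{I})$ together generate a finite-index congruence subgroup $\Gamma(\mathcal{J})$ of $\Gamma$ (this is the step where the hypothesis $\mathrm{char}(F) \neq 2$ is used, to ensure the relevant Weyl element is available with good integrality properties), triviality of the action of $\Gamma(\mathcal{J})$ on a non-zero subspace would produce a finite-dimensional subrepresentation of $\pi_\varphi|_\Gamma$, contradicting the weak mixing of $\varphi$ and completing the argument.
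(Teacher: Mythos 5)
Your opening reduction is exactly the paper's: the uniform decay follows from Proposition \ref{prop: mixing is uniform in U} once one knows $\pi_\varphi|_T$ is mixing, and the contradiction is ultimately reached by producing a vector invariant under $U(\mathcal I)$ and $V(\mathcal I)$ and invoking Theorem \ref{theorem:Raghunathan-Venkataramana}. But the middle of your argument has two genuine gaps. First, the asymmetric treatment of $U$ and $V$ followed by a Weyl-conjugation step does not work: the Weyl representative $w$ lies in $\mathbf G(F)$ but in general not in $\Gamma=\mathbf G(R)$, so $\pi_\varphi(w)$ is not defined, and even if it were, conjugation would carry the $V(\mathcal I)$-invariant subspace to a \emph{different} subspace (invariant under $wV(\mathcal I)w^{-1}$), not produce a single vector invariant under both unipotent congruence subgroups. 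The correct move is symmetric: apply the congruence/induced dichotomy (Lemma \ref{lem: decomposition lemma}) to \emph{both} $M_U$ and $M_V$, deduce that the non-mixing part $\mathcal H_{T,\mathrm{nm}}$ is contained in $\mathrm{Im}(p)\cap\mathrm{Im}(q)$, where $p$ and $q$ are the suprema of the projections onto $U(\mathcal I_n)$- and $V(\mathcal I_n)$-invariant vectors. The generalized Bekka lemma plays no role here; it is a vanishing criterion, not a device for producing invariant vectors.

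Second, your claim that the congruence part of $\varphi|_{M_V}$ ``factors through a finite quotient $V/V(\mathcal I)$ for some non-zero ideal $\mathcal I$'' is false: it is only a countable direct sum of congruence representations with \emph{varying} ideals (see the remark following Proposition \ref{prop:characters of AU}). Consequently, knowing $\mathrm{Im}(p)\cap\mathrm{Im}(q)\neq\{0\}$ does not trivially yield a common finite congruence level; one needs the fact that for increasing sequences of projections in a tracial von Neumann algebra, $p\wedge q\neq 0$ forces $p_{n_0}\wedge q_{n_0}\neq 0$ for some $n_0$ (Lemma \ref{lemma:very cool lemma}, proved via Kaplansky's formula and normality of the trace). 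This is the technical heart of the argument and is absent from your proposal. A minor point: $\mathrm{char}(F)\neq 2$ is used only as a hypothesis of the Raghunathan--Venkataramana theorem itself, not to secure integrality of a Weyl element.
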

Our key Theorem \ref{theorem:on characters of G admitting AU and AV} is analogous to the main proposition of Peterson and Thom in the case of $\mathrm{SL}_2$ \cite[Proposition~2.2]{PetersonThom}. Their line of proof is adaptable to our general setting. Indeed, if the unipotent radical $\mathbf U$ of the minimal parabolic $\mathbf P$ is abelian then \cite{PetersonThom}  works almost verbatim. The general case requires more work. We chose to give a different line of proof, relying on the character theory of solvable arithmetic groups developed in \S\ref{sec:5}. Of course, our proof still shares mutual ideas with \cite{PetersonThom}. In particular, both proofs heavily rely  on the following deep result of Raghunathan and Venkataramana.
\begin{theorem}[{\cite{raghunathan1976congruence,Venkataramana}}]
\label{theorem:Raghunathan-Venkataramana}
For any non-zero ideal $\mathcal I \lhd R$ the subgroup $\left< U(\mathcal I),  V(\mathcal I)\right>$ has finite index in $\Gamma = \mathbf G(R)$.
\end{theorem}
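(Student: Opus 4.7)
The strategy has two stages: reduce via strong approximation to producing a principal congruence subgroup inside $H := \langle U(\mathcal{I}), V(\mathcal{I}) \rangle$, and then exploit commutator calculations in $\mathbf{G}$ to actually exhibit such a subgroup.

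For the reduction, I would invoke strong approximation (Kneser, Platonov, Prasad). Under the Standing Assumptions, $\mathbf{G}$ is connected, simply connected, absolutely almost simple, and $F$-isotropic, which is exactly the setting where strong approximation applies with respect to $S$. Consequently the reduction map $\Gamma \to \mathbf{G}(R/\mathcal{J})$ is surjective for every non-zero ideal $\mathcal{J} \lhd R$. Since $R/\mathcal{J}$ is a finite ring, its target is a finite group, so $\Gamma(\mathcal{J})$ has finite index in $\Gamma$. Hence it suffices to produce some non-zero ideal $\mathcal{J}$ with $\Gamma(\mathcal{J}) \leq H$.

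For the generation step, I would work on the open Bruhat cell $\mathbf{V} \mathbf{A} \mathbf{U}$. In rank one, with root system of type $A_1$ or $BC_1$ (per Remark \ref{remark:abelian or two step nilpotent}), a commutator $[u,v]$ of elements $u \in U(\mathcal{I})$ and $v \in V(\mathcal{I})$ generically lies on this big cell and, by the standard Chevalley--Bruhat commutator formula, has a non-trivial $\mathbf{A}$-component. Varying $u$ and $v$ and iterating, one should produce inside $H$ a subgroup of the split torus $A$ rich enough to conjugate $U(\mathcal{I})$ and $V(\mathcal{I})$ onto arbitrarily deep congruence subgroups $U(\mathcal{I}')$, $V(\mathcal{I}')$. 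The arithmetic hypothesis $|S| \geq 2$ enters here via Dirichlet's unit theorem (Remark \ref{rem:Dirichlet}), which supplies units of infinite order in $R$ so that enough torus scalings are realized. Once $U(\mathcal{J})$ and $V(\mathcal{J})$ both lie inside $H$ for some $\mathcal{J}$, a Chevalley-type presentation argument for simply connected groups shows that opposite congruence unipotents generate the full principal congruence subgroup $\Gamma(\mathcal{J})$, finishing the proof.

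The main obstacle is the $BC_1$ case, in which $\mathbf{U}$ is two-step nilpotent rather than abelian. There the open-cell commutator formula no longer cleanly isolates a torus component, and one must combine commutators inside $\mathbf{U}$ itself with the mixed commutators $[U(\mathcal{I}), V(\mathcal{I})]$, carefully tracking the different $\mathbf{A}$-weight layers of the unipotent radical, to manufacture enough torus generators to deepen the congruence level arbitrarily. This is the technically intricate heart of the arguments of Raghunathan and Venkataramana, and it is what makes the statement genuinely non-trivial beyond the $\mathrm{SL}_2$-like situation.
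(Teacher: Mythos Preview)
The paper does not prove this statement at all: Theorem~\ref{theorem:Raghunathan-Venkataramana} is quoted from the external works of Raghunathan and Venkataramana and used as a black box (the paper even remarks that this citation is the sole place where the hypothesis $\mathrm{char}(F)\neq 2$ is needed). So there is no ``paper's own proof'' to compare your proposal against.

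As for the sketch itself, be aware of a circularity in your generation step. You write that once $U(\mathcal{J})$ and $V(\mathcal{J})$ lie in $H$, ``a Chevalley-type presentation argument for simply connected groups shows that opposite congruence unipotents generate the full principal congruence subgroup $\Gamma(\mathcal{J})$''. But that last assertion \emph{is} the Raghunathan--Venkataramana theorem at level $\mathcal{J}$; there is no independent Chevalley presentation that hands it to you in rank one. The genuine content of those papers is precisely to bootstrap from $\langle U(\mathcal I), V(\mathcal I)\rangle$ to a full congruence subgroup, and this requires a much more delicate inductive commutator analysis (and, in Venkataramana's positive-characteristic argument, additional input specific to that setting) than your outline suggests. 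Your first stage (strong approximation reducing the problem to containment of some $\Gamma(\mathcal J)$) and your identification of the $BC_1$ case as the hard one are both on target, but the middle of the argument is where all the work lies and your sketch essentially assumes it.
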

\begin{remark}
The assumptions of the Raghunathan--Venkataramana theorem are weaker than those stated above. Namely, $S$ does not have to be of size $\geq 2$, but it is  required that $\mathrm{rank}_F(\mathbf{G}) \ge 1$ and $\sum_{\nu\in S}\mathrm{rank}_{F_\nu}(\mathbf{G}) \ge 2$.
We emphasize that Theorem \ref{theorem:Raghunathan-Venkataramana} is the only place  where we use the assumption  $\text{char}(F) \ne 2$. The authors do not know whether Theorem \ref{theorem:Raghunathan-Venkataramana} holds in characteristic two (if it does hold  then the restriction on the characteristic  would be removed from our theorem).
\end{remark}

\begin{remark}
The particular Standing Assumption $\mathrm{rank}_F(\mathbf G) = 1$ is redundant and is never used in \S\ref{sec:6}.
\end{remark}

\subsection{The interplay of induced and congruence characters}
In Proposition  \ref{prop:characters of AU} and the discussion following it, we have decomposed the Thoma space $\Ch{M_U}$ into a disjoint union of two Borel subsets $ \Ch{M_U}_\mathrm{cong}$ and $\Ch{M_U}_\mathrm{ind}$. The GNS representation of each character $\varphi \in \Ch{M_U}_\mathrm{ind}$ is induced from the subgroup $U$, and is in particular infinite-dimensional. On the other hand, as the ring $R$ has only countably many ideals, the subset $\Ch{M_U}_\mathrm{cong}$ of the Thoma dual admits a countable filtration, according to which congruence subgroup  of $U$ belongs to the kernel. Each  character $\varphi \in \Ch{M_U}_\mathrm{cong}$ is finite-dimensional by Proposition \ref{prop:characters of AU}. 

\begin{lemma}\label{lem: decomposition lemma}
Let $\varphi \in \Tr{\Gamma}$ be a trace with GNS-triple $(\mathcal{H}_\varphi, \pi_\varphi, \xi_\varphi)$. Then $\pi_\varphi$ admits a direct sum decomposition of $M_U$-representations $$\pi_\varphi = \pi_{M_U,\mathrm{ind}} \oplus \pi_{M_U,\mathrm{cong}}$$
such that the restriction of  $\pi_{M_U,\mathrm{cong}}$ to the subgroup $U$ is a direct sum of congruence representations, and
the restriction of $\pi_{M_U,\mathrm{ind}}$ to the subgroup $T$ is mixing.
\end{lemma}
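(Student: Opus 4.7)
The plan is to define $\mathcal{H}_{M_U,\mathrm{cong}}$ as the closed linear span in $\mathcal{H}_\varphi$ of the union $\bigcup_{\mathcal{I}}\mathcal{H}_\varphi^{U(\mathcal{I})}$ taken over all non-zero ideals $\mathcal{I}\lhd R$, where $\mathcal{H}_\varphi^{U(\mathcal{I})}$ denotes the subspace of $\pi_\varphi(U(\mathcal{I}))$-fixed vectors, and to set $\mathcal{H}_{M_U,\mathrm{ind}} := \mathcal{H}_{M_U,\mathrm{cong}}^\perp$. The $U$-invariance of $\mathcal{H}_{M_U,\mathrm{cong}}$ is automatic since each $U(\mathcal{I})$ is normal in $U$. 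For $T$-invariance, given $v\in\mathcal{H}_\varphi^{U(\mathcal{I})}$, the vector $\pi_\varphi(a)v$ is fixed by $\pi_\varphi(aU(\mathcal{I})a^{-1})$, and Lemma \ref{lemma:congruence under F-morphisms} applied to the $F$-morphism $u\mapsto a^{-1}ua$ furnishes a non-zero ideal $\mathcal{J}$ with $U(\mathcal{J})\subseteq aU(\mathcal{I})a^{-1}$, placing $\pi_\varphi(a)v$ in $\mathcal{H}_\varphi^{U(\mathcal{J})}\subseteq \mathcal{H}_{M_U,\mathrm{cong}}$.

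To realize the sum-of-congruence-representations structure, enumerate the countably many non-zero ideals of $R$ as $\{\mathcal{J}_k\}_{k\ge 1}$ and set $\mathcal{I}_n := \mathcal{J}_1\cap\cdots\cap\mathcal{J}_n$. The subspaces $\mathcal{H}_\varphi^{U(\mathcal{I}_n)}$ form an increasing chain whose union is dense in $\mathcal{H}_{M_U,\mathrm{cong}}$, and the orthogonal complements $\mathcal{K}_n := \mathcal{H}_\varphi^{U(\mathcal{I}_n)}\ominus\mathcal{H}_\varphi^{U(\mathcal{I}_{n-1})}$ are $U$-invariant (again by normality) with $U(\mathcal{I}_n)$ acting trivially. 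Hence $\mathcal{H}_{M_U,\mathrm{cong}}=\bigoplus_n \mathcal{K}_n$ is a direct sum of congruence $U$-representations.

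The main obstacle is the mixing statement on $\mathcal{H}_{M_U,\mathrm{ind}}$. The key is to exploit the commuting representation $\rho_\varphi$: since each $\rho_\varphi(g)$ commutes with $\pi_\varphi(U(\mathcal{I}))$, every subspace $\mathcal{H}_\varphi^{U(\mathcal{I})}$ is $\rho_\varphi(\Gamma)$-invariant, and so are $\mathcal{H}_{M_U,\mathrm{cong}}$ and $\mathcal{H}_{M_U,\mathrm{ind}}$. The orthogonal projection $P_\mathrm{ind}$ onto $\mathcal{H}_{M_U,\mathrm{ind}}$ therefore commutes with both $\pi_\varphi(M_U)$ and $\rho_\varphi(\Gamma)$. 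Setting $\xi_\mathrm{ind} := P_\mathrm{ind}\xi_\varphi$, the $\rho_\varphi(\Gamma)$-cyclicity of $\xi_\varphi$ in $\mathcal{H}_\varphi$ transfers to $\xi_\mathrm{ind}$ inside $\mathcal{H}_{M_U,\mathrm{ind}}$, and the trace-like relation $\pi_\varphi(m)\rho_\varphi(m)\xi_\varphi=\xi_\varphi$ descends to $\pi_\varphi(m)\rho_\varphi(m)\xi_\mathrm{ind}=\xi_\mathrm{ind}$ for $m\in M_U$. Hence, after normalization, $\tau(m):=\langle\pi_\varphi(m)\xi_\mathrm{ind},\xi_\mathrm{ind}\rangle$ is a trace on $M_U$.

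Finally, I would argue that $\tau\in\Tr{M_U}_\mathrm{ind}$: a non-zero component on $\Ch{M_U}_\mathrm{cong}$ would, via Proposition \ref{prop:disj traces and gns}, exhibit a $U(\mathcal{I})$-fixed vector inside the GNS of $\tau$, which lies in $\mathcal{H}_{M_U,\mathrm{ind}}$ and hence contradicts the construction. Proposition \ref{prop:characters of AU}, together with the subsequent remark that traces in $\Tr{M_U}_\mathrm{ind}$ vanish on $M_U\setminus U$, then forces $\tau(a^n)=0$ for every $n\ne 0$. Commutativity of $\rho_\varphi(g)$ with $\pi_\varphi(a^n)$ and unitarity of $\rho_\varphi(g)$ yield
$$\langle\pi_\varphi(a^n)\rho_\varphi(g)\xi_\mathrm{ind},\rho_\varphi(g)\xi_\mathrm{ind}\rangle=\tau(a^n)=0\quad\text{for all } n\ne 0,\ g\in\Gamma,$$
so every vector of the form $\rho_\varphi(g)\xi_\mathrm{ind}$ is $T$-mixing. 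Since the set of $T$-mixing vectors is a closed subspace of $\mathcal{H}_\varphi$ by Lemma \ref{lemma:unitary decomposes into mixing and non-mixing} and contains the dense subset $\rho_\varphi(\Gamma)\xi_\mathrm{ind}$ of $\mathcal{H}_{M_U,\mathrm{ind}}=\overline{\rho_\varphi(\Gamma)\xi_\mathrm{ind}}$, the mixing of $\pi_\varphi|_T$ on $\mathcal{H}_{M_U,\mathrm{ind}}$ follows.
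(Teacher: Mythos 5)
Your proof is correct, but it takes a genuinely different route from the paper's. The paper first restricts $\varphi$ to $M_U$, decomposes the restricted trace $\psi=t\psi_{\mathrm{cong}}+(1-t)\psi_{\mathrm{ind}}$ into disjoint parts via its Bochner transform and Proposition \ref{prop:disj traces and gns}, obtains the corresponding disjoint decomposition of $\pi_\psi$, and then transfers it to $\pi_\varphi$ using the containment $\pi_\varphi\le\infty\,\pi_\psi$ of $M_U$-representations. You instead build the decomposition spatially inside $\mathcal{H}_\varphi$, taking $\mathcal{H}_{M_U,\mathrm{cong}}$ to be the closed span of all $U(\mathcal{I})$-fixed vectors, and you recover a trace on $M_U$ to which Proposition \ref{prop:characters of AU} applies by projecting the cyclic vector with the (central, since it commutes with both $\pi_\varphi$ and $\rho_\varphi$) projection $P_{\mathrm{ind}}$ and exploiting the commutant representation $\rho_\varphi$. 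Your construction has the advantage of yielding the identification $\mathcal{H}_{M_U,\mathrm{cong}}=\mathrm{Im}(p)$ by definition, a fact the paper asserts and uses in the proof of Theorem \ref{theorem:on characters of G admitting AU and AV}; the paper's route is shorter at the level of this lemma because it outsources all the work to the disjointness machinery. Two points in your argument deserve a sentence each: the degenerate case $\xi_{\mathrm{ind}}=0$ (harmless, since then $\mathcal{H}_{M_U,\mathrm{ind}}=\{0\}$), and the claim that a non-zero congruence component of $\tau$ produces a $U(\mathcal{I})$-fixed vector for a \emph{single} ideal $\mathcal{I}$ --- this requires first using the countability of the ideals of $R$ to find one ideal carrying positive $\mu_\tau$-mass, and then the equality case of Cauchy--Schwarz to convert $\tau'(u)=1$ on $U(\mathcal{I})$ into an actual fixed vector; both steps are routine and appear elsewhere in the paper, but they should be said.
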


\begin{proof} 
Let $\psi \in \Tr{M_U}$ denote\footnote{We use the different symbol $\psi$ to denote the restriction of $\varphi$ in order to avoid confusion.} the restriction of the trace $\varphi$ to the subgroup $M_U$. Let $(\mathcal{H}_\psi,\pi_\psi,\xi_\psi)$ be the GNS-triple of the trace $\psi$, so that $\pi_\psi$ is a unitary representation of $M_U$.
 According to Proposition \ref{prop:disj traces and gns} there is a 
convex combination $\psi = t \psi_\mathrm{cong} + (1-t)\psi_\mathrm{ind}$ for some $t \in \left[0,1\right]$ where $\psi_\mathrm{cong} \in \Tr{M_U}_\mathrm{cong}, \psi_\mathrm{ind} \in \Tr{M_U}_\mathrm{ind}$ as well as a corresponding direct sum  of \emph{disjoint}
$M_U$-representations $\pi_\psi = \pi_{\psi,\mathrm{cong}} \oplus \pi_{\psi,\mathrm{ind}}$. The restriction of the $M_U$-representation $\pi_{\psi,\mathrm{cong}}$ to the subgroup $U$ is a direct sum of congruence representations, and the restriction of the $M_U$-representation $\pi_{\psi,\mathrm{ind}}$ to the subgroup $T$ is mixing by Proposition \ref{prop:characters of AU}.

To conclude the proof, we need to pass from the smaller GNS representation $\pi_\psi$ to the larger one $\pi_\varphi$. These two representations satisfy $\pi_\varphi \le \infty \pi_\psi$, where $ \infty \pi_\psi$ stands for the infinite countable multiple of $\pi_\psi$  \cite[Proposition 8.2.2]{Popa} \footnote{In the special setting of GNS representations, the statement $\pi_\varphi \le \infty \pi_\psi$ can be proved directly by noting that each vector in the total set $\pi_\varphi(g) \xi_\varphi, g\in \Gamma$ generates a cyclic $M_U$-representation isomorphic to the GNS-representation $(\mathcal{H}_\psi,\pi_\psi,\xi_\psi)$.}. By the above discussion $\infty \pi_\psi = \infty \pi_{\psi,\mathrm{cong}} \oplus \infty \pi_{\psi,\mathrm{ind}}  $ and these two summands are disjoint. This induces the desired direct sum decomposition of $M_U$-representations $\pi_\varphi = \pi_{M_U,\mathrm{ind}} \oplus \pi_{M_U,\mathrm{cong}}$. The stated properties of those two summands follow from the fact that $\pi_{M_U,\mathrm{ind}} \le \infty\pi_{\psi,\mathrm{ind}} $ and likewise $\pi_{M_U,\mathrm{cong}} \le \infty\pi_{\psi,\mathrm{cong}} $.
\end{proof}

Of course, Lemma \ref{lem: decomposition lemma} holds true mutatis mutandis for the subgroup $M_V$ in place of $M_U$. We are now in a position to prove  Theorem \ref{theorem:on characters of G admitting AU and AV}.

\begin{proof}[Proof of Theorem \ref{theorem:on characters of G admitting AU and AV}]

Consider a weakly mixing trace $\varphi \in \Tr{\Gamma}$ with associated GNS-triple $(\mathcal{H}_\varphi,\pi_\varphi,\xi_\varphi)$.
The desired conclusion will follow directly from Proposition \ref{prop: mixing is uniform in U} once we establish that the restriction of the representation $\pi_\varphi$ to the subgroup $T$ is mixing.

Let $\mathcal{I}_n \lhd R$ be a descending sequence of non-zero ideals, such that for every non-zero ideal $\mathcal{J} \lhd R$ we have $\mathcal{I}_n \subset \mathcal J$ for all $n$ sufficiently large. Denote 
$U_n = U(\mathcal{I}_n)$ and $V_n = V(\mathcal{I}_n)$ for all $n \in \N$.
 Let $p_n$ and $q_n$ respectively denote the orthogonal projections to the subspace of $U_n$-invariant and $V_n$-invariant vectors in $\cH_\varphi$. The two sequences $p_n$ and $q_n$ are ascending, namely $p_{n} \le p_{n+1}$ and $q_n \le q_{n+1}$ for all $n \in \N$. Denote $p = \sup_n p_n$ and $ q = \sup_n q_n$. 
Note that $p_n \in \pi_\varphi(U_n)'' \le \pi_\varphi(\Gamma)''$ and likewise $q_n \in \pi_\varphi(V_n)'' \le \pi_\varphi(\Gamma)''$ for all $n \in \N$. Hence $p,q \in \pi_\varphi(\Gamma)''$ as well, by the fact that von Neumann algebras are closed in the strong operator topology.

Consider the restriction of $\pi_\varphi$ to the subgroup $T$.  According to  Lemma \ref{lemma:unitary decomposes into mixing and non-mixing}, this restriction admits a decomposition into a direct sum of $T$-representations
$$\mathcal{H}_\varphi = \mathcal{H}_{T,\mathrm{mix}} \oplus \mathcal{H}_{T,\mathrm{nm}}$$
such that $\mathcal{H}_{T,\mathrm{mix}}$ is mixing and no $T$-subrepresentation of $\mathcal{H}_{T,\mathrm{nm}}$ is mixing.
On the one hand, by Lemma \ref{lem: decomposition lemma} there exists a canonical decomposition into $M_U$-subrepresentations $\mathcal{H}_\varphi = \mathcal{H}_{M_U,\mathrm{ind}} \oplus \mathcal{H}_{M_U,\mathrm{cong}}$ 
with $\cH_{M_U, \mathrm{ind}} \subseteq \cH_{T,\mathrm{mix}}$. Equivalently $$\mathcal{H}_{T,\mathrm{nm}} \subseteq \mathcal{H}_{M_U,\mathrm{cong}} = \mathrm{Im}(p).$$
On the other hand, by applying the same argument with respect to the subgroup $M_V$ we get a canonical decomposition into $M_V$-subrepresentations
$\mathcal{H}_\varphi = \mathcal{H}_{M_V,\mathrm{ind}} \oplus \mathcal{H}_{M_V,\mathrm{cong}}$
with $\cH_{M_V, \mathrm{ind}} \subseteq \cH_{T,\mathrm{mix}}$. Equivalently 
$$\mathcal{H}_{T,\mathrm{nm}} \subseteq \mathcal{H}_{M_V,\mathrm{cong}} = \mathrm{Im}(q).$$ 

To conclude the proof, assume towards contradiction that $\mathcal{H}_{T,\mathrm{mix}} \lneq \mathcal{H}_\varphi$. This assumption is equivalent to $\mathcal{H}_{T,\mathrm{nm}} \neq \{0\}$. By the above two inclusions, we get
$$\mathcal{H}_{M_U,\mathrm{cong}} \cap \mathcal{H}_{M_V,\mathrm{cong}}  = \mathrm{Im}(p) \cap \mathrm{Im}(q) \neq \{0\}.$$
According to Lemma \ref{lemma:very cool lemma} there is some  $n \in \N$ such that $\mathrm{Im}(p_n) \cap \mathrm{Im}(q_n) \neq \{0\}$. In other words, there is a non-zero vector $ \xi \in \mathcal{H}_\varphi$ invariant under $\pi_\varphi(U_n)$ as well as  $\pi_\varphi(V_n)$. The two subgroups $U_n$ and $V_n$ generate a finite-index subgroup of $\Gamma$ by Theorem \ref{theorem:Raghunathan-Venkataramana}. Hence  $\mathrm{span}(\pi_\varphi(\Gamma)\xi)$ is a non-zero finite-dimensional subrepresentation of $\mathcal{H}_\varphi$. This is a contradiction to the assumption that the trace $\varphi$ is  weakly mixing.
\end{proof}

\begin{remark}
\label{remark:redisovering}
As a byproduct of the above proof, we  rediscover the fact that every finite-dimensional unitary representation of the lattice $\Gamma$ factors through a finite quotient. This is known to be true for every higher-rank non-uniform lattice, as a corollary of Margulis' superrigidity theorem. To see how this fact follows from our proof, take any irreducible finite-dimensional unitary representation $\rho$ of $\Gamma$ and consider its trace $\varphi = \mathrm{tr}\circ \rho \in \Ch{\Gamma}$. The GNS representation $\pi_\varphi$ is finite-dimensional,  hence its restriction to $T$ is not mixing. The above argument produces a subrepresentation of $\pi_\varphi$ consisting of invariant vectors for some finite-index subgroup of $\Gamma$. Therefore $\pi_\varphi$ admits a non-trivial subrepresentation factoring through a finite quotient of $\Gamma$. The desired fact now follows, as  $\pi_\varphi$ is equivalent to a finite multiple of $\rho$.
\end{remark}

In the proof of Theorem \ref{theorem:on characters of G admitting AU and AV} we made an essential use of the following fact on projections in tracial von Neumann algebras. To the best of our understanding, it has been used implicitly in the proof of \cite[Proposition 2.2]{PetersonThom} by Peterson and Thom.

\begin{lemma}
\label{lemma:very cool lemma}
Let $(\mathcal M,\tau)$ be a tracial von Neumann algebra. Let $p_n, q_n \in \mathcal M$ be two sequences of projections such that $p_ n\leq p_{n+1}$ and $ q_n \leq q_{n+1}$ for all $n \in \N$. Denote $p = \sup_n p_n$ and $q = \sup_n q_n$. If $p \wedge q \neq 0$ then   $p_{n_0} \wedge q_{n_0} \neq 0$ for some $n_0 \in \N$.
\end{lemma}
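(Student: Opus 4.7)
The plan is to reduce the statement to a limiting computation of the trace using the Kaplansky (parallelogram) identity
\[
\tau(p \wedge q) \;=\; \tau(p) + \tau(q) - \tau(p \vee q),
\]
which is valid for any two projections in a tracial von Neumann algebra since $p \vee q - q$ is Murray--von Neumann equivalent to $p - p \wedge q$. This identity allows us to pass from the (rather subtle) behavior of $p_n \wedge q_n$ to the (easily controlled) behavior of $p_n \vee q_n$.

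First I would check that the hypotheses already ensure the right monotonicity. Since $p_n \leq p_{n+1}$ and $q_n \leq q_{n+1}$, both sequences $p_n \vee q_n$ and $p_n \wedge q_n$ are ascending, so their suprema exist in $\cM$. For the join, $\sup_n(p_n \vee q_n) \geq p_n$ and $\geq q_n$ for all $n$, hence it dominates $p \vee q$; conversely $p_n \vee q_n \leq p \vee q$, so in fact $p_n \vee q_n \nearrow p \vee q$ in the strong operator topology.

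Next I would apply the trace. Normality of $\tau$ gives $\tau(p_n) \nearrow \tau(p)$, $\tau(q_n) \nearrow \tau(q)$, and $\tau(p_n \vee q_n) \nearrow \tau(p \vee q)$. Combining this with the Kaplansky identity applied to each pair $(p_n, q_n)$ yields
\[
\tau(p_n \wedge q_n) \;=\; \tau(p_n) + \tau(q_n) - \tau(p_n \vee q_n) \;\xrightarrow[n \to \infty]{}\; \tau(p) + \tau(q) - \tau(p \vee q) \;=\; \tau(p \wedge q).
\]
By the hypothesis $p \wedge q \neq 0$ and faithfulness of $\tau$, the right-hand side is strictly positive. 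Hence $\tau(p_{n_0} \wedge q_{n_0}) > 0$ for some $n_0$, and faithfulness of $\tau$ again forces $p_{n_0} \wedge q_{n_0} \neq 0$, as required.

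There is no real obstacle here; the only thing to be careful about is invoking the Kaplansky identity at the level of traces (rather than as a statement about projections) so that one does not need $p_n \wedge q_n$ itself to converge to $p \wedge q$ in any strong sense. The monotonicity of the $p_n$ and $q_n$ is what makes the join converge cleanly, which is the one ingredient that would fail for arbitrary pairs of projections.
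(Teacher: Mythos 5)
Your proof is correct. Both you and the paper rely on the same two ingredients — Kaplansky's parallelogram law and normality/faithfulness of the trace — but you deploy them differently. The paper applies Kaplansky to the pair $(p\wedge q,\, p_n)$, obtaining $(p\wedge q)\vee p_n - p_n \sim p\wedge q - p_n\wedge q$, and runs a two-stage reduction: first produce $n$ with $p_n\wedge q\neq 0$, then repeat with the roles swapped to get $p_n\wedge q_m\neq 0$, and take $n_0=\max\{n,m\}$. You instead apply the trace identity $\tau(a\wedge b)=\tau(a)+\tau(b)-\tau(a\vee b)$ directly to the pairs $(p_n,q_n)$ and observe that $p_n\vee q_n\nearrow p\vee q$ (which is exactly where the monotonicity of both sequences is used), so that $\tau(p_n\wedge q_n)\to\tau(p\wedge q)>0$ in a single step. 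Your route is slightly cleaner and in fact yields marginally more: it shows $\tau(p_n\wedge q_n)\to\tau(p\wedge q)$, hence (by faithfulness and monotonicity of $p_n\wedge q_n$) that $\sup_n(p_n\wedge q_n)=p\wedge q$, whereas the paper's argument only extracts the nonvanishing needed for the application. The one point to state explicitly, which you correctly identify, is that the supremum of the increasing sequence $p_n\vee q_n$ equals $p\vee q$; this follows since any upper bound for all $p_n\vee q_n$ dominates each $p_n$ and each $q_n$, hence dominates $p$ and $q$.
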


\begin{proof}
To prove the lemma, it will be enough to show that if $p \wedge q \neq 0$, then there exists some $n\in\N$ such that $p_n \wedge q \neq 0$. Indeed, fixing such $n \in \N$, we can apply the argument a second time to obtain some $m \in \N$ such that $p_n \wedge q_m \neq 0$. 
The lemma would now follow by taking $n_0 = \max\{n,m\} \in \N$.

Kaplansky's formula \cite[Proposition 2.4.5]{Popa} says that  any two projections $a,b \in \mathcal M$ satisfy
$$a \vee b  - b \sim  a - a \wedge b.$$ 
The symbol  $\sim$ stands for Murray-von Neumann equivalence \cite[Definition 2.4.3]{Popa}. This equivalence implies in particular that $\tau(a \vee b  - b)= \tau(  a - a \wedge b)$.
Applying Kaplansky's formula with respect to 
the elements $a = p \wedge q$ and $b= p_n$ yields
    $$(p \wedge q) \vee p_n - p_n \sim p \wedge q - (p \wedge q) \wedge p_n =  p \wedge q - p_n \wedge q.$$
The sequence $(p \wedge q ) \vee p_n$ is an increasing sequence of projections, whose supremum is $(p \wedge q) \vee p = p$. 
    It follows that $(p\wedge q) \vee p_n - p_n$ converges in the strong operator topology to $0$.
    Consequently, by the normality of the trace $\tau$ we have
    $$ \lim_n \tau(p \wedge q )- \tau (p_n \wedge q) = \lim_n \tau(p \wedge q -p_n \wedge q)=\lim_n \tau((p \wedge q) \vee p_n - p_n ) = 0.$$
The faithfulness of the trace $\tau$ implies $\tau(p \wedge q) \neq 0$. Therefore  there exists some $n \in \N$ with $\tau(p_n \wedge q) \neq 0$, as desired.
\end{proof}

\section{Two proofs of character rigidity}\label{sec:7}
In this section we establish character rigidity and prove Theorem \ref{main_thm}. 
Let $H$ be a semisimple group as in Theorem \ref{main_thm}, and $\Lambda$ an irreducible non-uniform lattice in $H$. According to Proposition \ref{prop:notationIsGood}, there exist a global field $F$, a finite set of valuations $S$ and an $F$-group $\mathbf{G}$ that satisfy all of the Standing Assumptions \S\ref{standing assumption}, such that $\Gamma=\mathbf{G}(R)=\mathbf{G}(F(S))$ is a central extension of some lattice commensurable to $\Lambda$. Proposition \ref{prop:char_rig_extensions} and Corollary \ref{cor:char_rig_commensuability_nonICC} imply that $\Gamma$ is character rigid if and only if $\Lambda$ is. The results from \S\ref{sec:4} are  applicable, for the lattices $\Gamma$ and $\Lambda$ have at most countably many finite-dimensional unitary  by \cite[Proposition~7.1]{BBHP}. Hence, it will suffice to prove character rigidity for the arithmetic lattice $\Gamma$, which is what we proceed to do. 

We present two different proofs of character rigidity for $\Gamma$, both relying on Theorem \ref{theorem:on characters of G admitting AU and AV}. The first proof is self-contained, while the second one uses the notion of charmenability, which is discussed below.

\subsection{The first proof}\label{subsec:7.1}

We will use the generalized Bekka lemma (Lemma \ref{lemma:generalized bekka lemma}) together with our   mixing theorem (i.e. Theorem \ref{theorem:on characters of G admitting AU and AV}) in order to deduce character rigidity. A novel idea is to apply Theorem \ref{theorem:on characters of G admitting AU and AV} with respect to different tori, the particular choice of a torus depending on the group element in question.

\begin{remark}
The proof will use the properties of the action of the group $\mathbf G(F)$ satisfying $\mathrm{rank}_F(\mathbf G) = 1$ on the countable set $B = \mathbf G(F) / \mathbf P(F)$. We will informally call the set $B$ the \enquote{boundary}, even though there is no topology or measurable structure present. 
\end{remark}

\begin{remark}
\label{remark:acting non-trivially}
Any non-central element $g \in \mathbf G(F) \setminus \mathrm{Z}(\mathbf G(F))$ acts non-trivially on the boundary $B$. This follows from the fact that the group $\mathbf G$ is almost $F$-simple. 
Indeed, if an element $g$ fixes the boundary $B$ pointwise  then $ g \in \bigcap_{h \in \mathbf G(F)} h \mathbf P(F) h^{-1}$. The latter is a proper normal subgroup of $\mathbf G(F)$. By \cite[Theorem~I.1.5.6]{Margulis}  this normal subgroup is contained in the center $\mathrm{Z}(\mathbf G(F))$.
\end{remark}

\begin{remark}
Most of the results that we use follow from the Bruhat decomposition $\mathbf G(F) = \mathbf P(F) \amalg \mathbf P(F)w\mathbf P(F)$, where $w$ is a non-trivial Weyl element associated to some $F$-split torus $\mathbf A \leq \mathbf P$, see \cite[Theorem 21.15]{Borel}. 
We caution the reader that the Bruhat decomposition of an element $g \in \Gamma$ might  involve elements from $\mathbf{G}(F)$ that are not from $\Gamma = \mathbf{G}(R)$.
\end{remark}

\begin{remark}
We write $\left[g,h\right] = g^{-1}h^{-1}gh$ following the convention used in \cite{dogon2024characters}.
\end{remark}

\begin{proof}[First proof of Theorem \ref{main_thm}]

Let $\varphi \in \Ch{\Gamma}$ be an infinite-dimensional character.
Choose any element $g \in \Gamma \setminus \mathrm{Z}(\Gamma)$. In order to establish that $\Gamma$ is character rigid we  show  that $\varphi(g) = 0$.

The element $g$ acts non-trivially on the boundary $B$ (see Remark \ref{remark:acting non-trivially}).
Choose some point $\xi \in B$ in the boundary not fixed by $g$. We now get two distinct boundary points $\xi \neq g\xi$. Since $\mathrm{rank}_F(\mathbf{G})=1$,  these two points correspond to a pair of opposite minimal $F$-parabolic subgroups $\mathbf Q$ and $\mathbf Q'$ stabilizing $\xi$ and $g\xi$ respectively. The intersection $\mathbf{L} = \mathbf Q \cap \mathbf Q'$ is a mutual Levi subgroup of both parabolics. The Levi subgroup $\mathbf L$ contains a unique 
central
$F$-split torus $\mathbf A$. Recall that $\mathbf Q$ admits a Levi decomposition  $\mathbf{Q} = \mathbf{LU}$ where $\mathbf{U}$ is the unipotent radical of $\mathbf{Q}$. 

The Weyl group of the  torus $\mathbf A$ is $\mathrm{N}_{\mathbf G}( \mathbf A) / \mathrm{Z}_{\mathbf G}(\mathbf A) \cong \Z/2\Z$. Choose a Weyl element $w \in \mathrm{N}_{\mathbf G}(\mathbf A)(F)$ representing the non-trivial class. The action of $w$ on the torus $\mathbf A$ is given by $waw^{-1} = a^{-1}$. In its action on the boundary, the element $w$ flips the two fixed points $\xi$ and $g\xi$.
Since $g\xi = w\xi$ and as $\mathbf Q = \mathrm{Stab}_{\mathbf G}(\xi)$,  we conclude that $g = wb$ for some element $b \in \mathbf Q(F)$. Using the Levi decomposition, we may write $g = wb = wmu$ for some pair of elements $m \in \mathbf L(F)$ and $u \in \mathbf U(F)$.

We are now in the right setting to apply Lemma \ref{lemma:generalized bekka lemma} (i.e. the \enquote{generalized Bekka lemma}). Choose an element $a \in A = \mathbf A(R)$ that generates a Zariski-dense subgroup of $\mathbf A$. Consider the sequence of elements $x_n = a^n$. We compute the commutators of $g$ and $x_n$:
\begin{align*}
\left[g,x_n\right] &= 
\left(wmu\right)^{-1} a^{-n}  \left(wmu\right) a^n  =  \left(mu\right)^{-1}  \left(w^{-1} a^{-n} w\right) \left(mu\right) a^n= \\
&=  \left(mu\right)^{-1}  a^{n}  \left(mu\right) a^{n}  = u^{-1} a^{n} u a^{n}  = a^{2n} v_n
\end{align*}
for some element $v_n  \in U =  \mathbf U(R)$. Indeed, since $\mathbf A(F)$ normalizes $\mathbf U(F)$ the element $v_n$ certainly belongs to $\mathbf U(F)$. The fact that $v_n$  moreover lies in $\mathbf{G}(R)$ follows from the fact that $\mathbf{G}(R)$ is a group. In the above computation we used the formula for the action of the Weyl group  on the torus, as well as the fact that $\mathbf L$ centralizes $\mathbf A$.

Next, we compute the product of a commutator and an inverse of another commutator, namely
$$[g,x_n]^{-1}[g,x_m] = (a^{2n} v_n)^{-1} (a^{2m} v_m) = a^{2(m-n)}v_{n,m}$$ 
for some element  $v_{n,m} \in U$. The fact that $v_{n,m}$ belongs to $ U$ follows from an argument similar to that of $v_n$.
We apply Theorem \ref{theorem:on characters of G admitting AU and AV} with respect to the solvable subgroup $M = \left<a\right> \ltimes U$ and deduce for each $m \in\N$ that
$$\lim_n |\varphi([g,x_n]^{-1}[g,x_m]) |= \lim_n |\varphi(a^{2(m-n)}v_{n,m})| \le \lim_n \sup_{u\in U(R)} |\varphi(a^{2(m-n)}u)|= 0.$$ 
Hence $\varphi(g) = 0$ by the generalized Bekka lemma (Lemma \ref{lemma:generalized bekka lemma}). 
\end{proof}

\subsection{The second proof}

We present an alternative proof for Theorem \ref{main_thm}. It also uses Theorem \ref{theorem:on characters of G admitting AU and AV}, but  instead of the Bruhat decomposition arguments in rank one, it uses the notion of charmenability and a reduction to the theorem of Peterson and Thom.

Before we continue, we briefly recall this notion of \textit{charmenability}, that was introduced by Bader, Boutonnet, Houdayer and Peterson in \cite{BBHP}.
\begin{definition}[{\cite[Definition~1.2]{BBHP}}]
\label{def:charmenable}
    A group $\Delta$ is called \textit{charmenable} if is satisfies the following two properties:
    \begin{enumerate}
        \item Every non-empty compact convex $\Delta$-invariant subset of $\mathrm{PD}_1(\Delta)$ contains a character.
        \item Every character of $\Delta$ is either supported on the amenable radical of $\Delta$, or is von Neumann amenable --- that is, the von Neumann algebra of its unitary GNS representation is amenable.
    \end{enumerate}
\end{definition}
For the arithmetic lattice $\Gamma$ introduced in the first paragraph of \S\ref{sec:7}, the amenable radical $\text{Rad}(\Gamma)$ is just the center $\mathrm{Z}(\Gamma)$, which is  finite. Therefore, if the lattice $\Gamma$ was charmenable, then every character of $\Gamma$ would  either be induced from the center or von Neumann amenable. This turns out to indeed be the case.
\begin{theorem}[{\cite[Theorem~B]{BBH}}]
\label{thm:charmenable}
The arithmetic lattice $\Gamma$ is charmenable.
\end{theorem}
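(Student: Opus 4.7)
The plan is to verify the two defining properties of charmenability (Definition \ref{def:charmenable}) for $\Gamma$, following the general strategy of Bader--Boutonnet--Houdayer. The central tool will be a noncommutative analogue of the Nevo--Zimmer structure theorem, which characterizes $\Gamma$-equivariant normal embeddings of $L^\infty$ of the Furstenberg--Poisson boundary of $H$ into tracial von Neumann algebras generated by unitary representations of $\Gamma$.

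For condition (1) --- that every non-empty compact convex $\Gamma$-invariant subset $K \subseteq \PD{\Gamma}$ contains a character --- the idea is to exploit amenability of the $\Gamma$-action on the Furstenberg boundary $B$ of $H$. Amenability provides a $\Gamma$-equivariant measurable map $\psi: B \to K$, and integrating against the unique $H$-stationary measure on $B$ produces a trace lying in $K$. One then extracts an indecomposable element (a character) using an ergodic decomposition argument that exploits double ergodicity of the $\Gamma$-action on $B \times B$, combined with the fact that $K$ is a simplex under the Bochner transform.

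For condition (2) --- every character is either supported on the amenable radical or von Neumann amenable --- the approach is to argue by contrapositive. Let $\varphi \in \Ch{\Gamma}$ be a character whose associated factor $\cM_\varphi$ is not amenable. The noncommutative Nevo--Zimmer theorem produces a $\Gamma$-equivariant normal unital $*$-embedding $L^\infty(B, \nu) \hookrightarrow \cM_\varphi$ for the Furstenberg boundary $B$ equipped with a stationary measure $\nu$. Combining this boundary embedding with the Howe--Moore mixing phenomenon on each non-compact simple factor of $H$, one deduces a strong mixing behavior for $\varphi$ along group elements escaping to infinity, which forces $\varphi$ to be supported on the amenable radical $\mathrm{Rad}(\Gamma) = \mathrm{Z}(\Gamma)$, which is finite for our arithmetic $\Gamma$.

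The main obstacle is establishing the noncommutative Nevo--Zimmer theorem itself for this class of $S$-arithmetic lattices, especially because $H$ may have factors over local fields of different characteristics and at least one rank-one factor where boundary rigidity is intrinsically weaker than in higher rank. The remedy is to leverage the product structure of $H$ together with the irreducibility of the lattice $\Gamma$: rigidity on the boundary factor(s) coming from higher-rank or Kazhdan-type components can be propagated across the product via the irreducibility of $\Gamma$, yielding the required equivariant boundary embedding into $\cM_\varphi$ even when individual factors of $H$ are of rank one.
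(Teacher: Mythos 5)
The paper does not prove this statement at all: it is imported as Theorem~B of Bader--Boutonnet--Houdayer \cite{BBH} and used as a black box in the second proof of Theorem \ref{main_thm}. There is therefore no in-paper argument to compare yours against; the question is whether your sketch would stand on its own as a proof of the cited result.

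As written, it would not. You correctly identify the two halves of Definition \ref{def:charmenable} and the general BBH strategy (boundary amenability plus double ergodicity of $\Gamma \acts B\times B$ for property (1); a noncommutative Nevo--Zimmer theorem for property (2)), but the entire difficulty of the theorem is concentrated in the step you defer, namely establishing the noncommutative Nevo--Zimmer theorem for $\Gamma=\mathbf{G}(R)$ inside $\prod_{v\in S}\mathbf{G}(F_v)$. Your proposed remedy --- that \enquote{rigidity on the boundary factor(s) coming from higher-rank or Kazhdan-type components can be propagated across the product via irreducibility} --- fails precisely in the cases this paper needs: there $\mathrm{rank}_{F_v}(\mathbf{G})=1$ for every $v\in S$ and no factor has property (T), so there is no higher-rank or Kazhdan factor to propagate from. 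The actual argument of \cite{BBH} for lattices of product type is of Bader--Shalom style: it works directly with the Poisson boundary of the product and exploits irreducibility (density of the projections to each factor) together with a separate contraction analysis on each factor, with no appeal to higher rank of any single factor. Reducing the theorem to an unproved Nevo--Zimmer statement that is at least as deep as the theorem itself leaves a genuine gap. A secondary point: in property (1), after producing a trace in the invariant compact convex set $K$, extracting a \emph{character} is not automatic from \enquote{$\mathrm{Tr}(\Gamma)$ is a simplex}, since an extreme point of $K\cap\Tr{\Gamma}$ need not be extreme in $\Tr{\Gamma}$; this step also requires the additional structure supplied in \cite{BBH}.
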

Hence, in order to prove character rigidity for $\Gamma$ it will suffice to prove that every von Neumann amenable character of $\Gamma$ is finite-dimensional. We now recall the character-rigidity result of Peterson and Thom.
\begin{theorem}[{\cite[Theorem~2.6]{PetersonThom}}]\label{petersonThom}
    The group $\mathrm{SL}_2(R)$ is character rigid.
\end{theorem}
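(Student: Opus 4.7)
The plan is to specialize the self-contained first proof of Theorem~\ref{main_thm} in \S\ref{subsec:7.1} to the case $\mathbf{G} = \mathbf{SL}_2$. Since $\mathrm{SL}_2$ is connected, simply-connected, absolutely almost simple with $\mathrm{rank}_F(\mathbf{SL}_2) = 1$, and since $|S| \ge 2$ supplies a Dirichlet unit in the diagonal torus while $\mathrm{char}(F) \ne 2$ is built into our Standing Assumptions \S\ref{standing assumption}, all the infrastructure developed in \S\ref{sec:5}--\S\ref{sec:7} is available verbatim for $\Gamma = \mathrm{SL}_2(R)$.

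Let $\varphi \in \Ch{\mathrm{SL}_2(R)}$ be an infinite-dimensional character. The goal is to show $\varphi(g) = 0$ for every $g \in \mathrm{SL}_2(R) \setminus \mathrm{Z}(\mathrm{SL}_2(R))$. Because $\varphi$ is extremal and $\mathcal{M}_\varphi$ is a type $\mathrm{II}_1$ factor (as $\varphi$ is infinite-dimensional), the commutant contains no minimal projection, so $\pi_\varphi$ has no finite-dimensional subrepresentation and $\varphi$ is weakly mixing. Theorem~\ref{theorem:on characters of G admitting AU and AV} then applies and yields, for any choice of minimal $F$-parabolic $\mathbf{P} = \mathbf{AU}$ and any Dirichlet unit $a \in \mathbf{A}(R)$, the uniform decay
$$\lim_{i \to \infty} \sup_{u \in U} |\varphi(a^i u)| = 0.$$

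Given a non-central $g \in \mathrm{SL}_2(R)$, I would then mimic the boundary argument from the first proof: pick a point $\xi \in \mathbb{P}^1(F) = \mathbf{SL}_2(F)/\mathbf{P}(F)$ with $g\xi \neq \xi$ (Remark~\ref{remark:acting non-trivially}), let $\mathbf{Q}$ and $\mathbf{Q}'$ be the opposite Borels stabilizing $\xi$ and $g\xi$, let $\mathbf{A}$ be the unique $F$-split torus in $\mathbf{Q} \cap \mathbf{Q}'$ and $w$ a non-trivial Weyl element, and write $g = wmu$ with $m$ in the Levi and $u$ in the unipotent radical $\mathbf{U}(F)$ of $\mathbf{Q}$. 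Fixing a Dirichlet unit $a \in \mathbf{A}(R)$, the identities $waw^{-1} = a^{-1}$ and $[\mathbf{L}, \mathbf{A}] = 1$ give
$$[g, a^n] = a^{2n} v_n, \qquad [g,a^n]^{-1}[g,a^m] = a^{2(m-n)} v_{n,m},$$
with $v_n, v_{n,m} \in U$ (the membership in $U = \mathbf{U}(R)$ follows from $\Gamma$-closure exactly as in \S\ref{subsec:7.1}). The mixing theorem applied to the torus $\mathbf{A}$ then forces $\lim_n \varphi([g,a^n]^{-1}[g,a^m]) = 0$ for every $m$, and the generalized Bekka lemma (Lemma~\ref{lemma:generalized bekka lemma}) yields $\varphi(g) = 0$.

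The substantive ingredient is Theorem~\ref{theorem:on characters of G admitting AU and AV}, which is the direct analog of Peterson and Thom's main proposition. In the $\mathrm{SL}_2$ setting, the unipotent $\mathbf{U}$ is one-dimensional and abelian, so the congruence/induced dichotomy of Proposition~\ref{prop:characters of AU} collapses to ordinary Mackey theory: a character of $M = \langle a\rangle \ltimes U$ either restricts to $U$ as a (finite convex combination of) congruence multiplicative characters with finite $\langle a\rangle$-orbit — giving a finite-dimensional character of $M$ — or it restricts as a measure on the Pontryagin dual of $U$ with essentially free $\langle a\rangle$-action, forcing vanishing on $\langle a\rangle U \setminus U$. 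The genuinely hard part is Lemma~\ref{lem: decomposition lemma} together with the Raghunathan--Venkataramana theorem, which is where both $|S| \ge 2$ (supplying Dirichlet units and the commuting projections argument) and $\mathrm{char}(F) \neq 2$ are needed; this is the main obstacle in the proof.
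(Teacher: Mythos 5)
Your proposal is correct, but it is worth being clear that it does not follow the paper's ``official'' proof of this statement: in the paper, Theorem \ref{petersonThom} is an external citation to Peterson--Thom, supplemented by the remark that their original argument extends to positive characteristic because the only inputs are an infinite-order unit in $R$ and the strong congruence subgroup property (supplied by Theorem \ref{theorem:Raghunathan-Venkataramana}). What you do instead is exactly what the paper's remark mentions only parenthetically: you derive the statement as the special case $\mathbf{G}=\mathbf{SL}_2$ of the self-contained first proof of Theorem \ref{main_thm} in \S\ref{subsec:7.1}. This is legitimate and non-circular, since Theorem \ref{petersonThom} is used only in the \emph{second} proof of Theorem \ref{main_thm}, and nothing in \S\ref{sec:5}--\S\ref{subsec:7.1} (Propositions \ref{prop:rational if and only of finite orbit} and \ref{prop:characters of AU}, Theorem \ref{theorem:on characters of G admitting AU and AV}, Lemma \ref{lemma:generalized bekka lemma}, the Bruhat/boundary argument) depends on it. Your individual steps check out: an infinite-dimensional character is weakly mixing because $\mathcal M_\varphi$ is a $\mathrm{II}_1$ factor, $\mathbf{SL}_2$ satisfies all the Standing Assumptions, the $g$-dependent choice of torus and opposite Borels is exactly as in \S\ref{subsec:7.1}, and the commutator computation together with Theorem \ref{theorem:on characters of G admitting AU and AV} and Lemma \ref{lemma:generalized bekka lemma} gives $\varphi(g)=0$. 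The trade-off between the two routes is as you suggest: the citation keeps the paper's second proof short and ``soft'' at the cost of importing Peterson--Thom's machinery (and a remark to justify positive characteristic), whereas your derivation is self-contained and uniform in all characteristics other than $2$, at the cost of invoking the full strength of \S\ref{sec:5}--\S\ref{sec:6}. One small imprecision: the hypothesis $|S|\ge 2$ enters through the existence of the infinite-order unit $a$ (Remark \ref{rem:Dirichlet}) and through irreducibility, not through Lemma \ref{lemma:very cool lemma} itself, and $\mathrm{char}(F)\neq 2$ enters only through Theorem \ref{theorem:Raghunathan-Venkataramana}; this does not affect the argument.
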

\begin{remark}
        Peterson and Thom state their result in characteristic zero, namely for localizations of number fields. However, reading  their proof, one notes that they only use the fact that the ring has an infinite order unit,  and satisfies the \enquote{strong congruence subgroup property} as stated in \cite[Lemma~2.7]{PetersonThom}. That property  is true any characteristic other than $2$ by Theorem \ref{theorem:Raghunathan-Venkataramana} (in any case, a proof of Theorem \ref{petersonThom} in all characteristics is also covered by our first proof).
        \end{remark}

We are now ready to give the second proof. We will freely use the Standing Assumptions \S\ref{standing assumption}; some of these assumptions were reiterated in the first paragraph of the current \S\ref{sec:7}.

\begin{proof}[Second proof of Theorem \ref{main_thm}]
Assume towards contradiction that some character $\varphi \in \Ch{\Gamma}$  is von Neumann amenable but is not finite-dimensional. 

There exists some algebraic $F$-morphism  $f : \mathbf{SL}_2 \to \mathbf{G}$ with finite kernel  \cite[Theorem~I.1.6.3]{Margulis}. Denote $\mathbf{L} = f(\mathbf{SL}_2)$ so that $\mathbf{L}$ is an $F$-subgroup of $\mathbf{G}$. Consider the subgroup $L = \mathbf{L}(R) \leq \Gamma$. We note that the group $L$ is character rigid. Indeed, Theorem \ref{petersonThom} guarantees that the group $\mathrm{SL}_2(R)$ is character rigid, and by Proposition \ref{prop:char_rig_extensions} so are its central quotients.
The group $L$ is abstractly commensurable
with a central quotient of $\mathrm{SL}_2(R)$. Hence $L$ is character rigid by Proposition \ref{prop:char_rig_commensurability}.

Let $\mathbf D \le \mathbf{SL}_2$ be the diagonal subgroup, so that $\mathbf{D}$ is an $F$-split torus. Consider the $F$-split torus $\mathbf A = f(\mathbf D) \le \mathbf G$  and  denote $A = \mathbf A(R)$.
The arithmetic group $\Gamma$ satisfies the assumptions of Theorem \ref{theorem:on characters of G admitting AU and AV}. Namely, since the character $\varphi$ is not finite-dimensional, we have $\lim_n \varphi(a^n) = 0$ for any element $a \in A$ generating a Zariski-dense subgroup.  
By the assumption that  $\varphi$ is von Neumann amenable, it follows that its restriction to $L$ is also von Neumann amenable \cite[Remark 11.2.3]{Popa}. 
Character rigidity for $L$ implies that $\varphi_{|L}$ is a convex combination of finite-dimensional characters.

We deduce that the unitary GNS representation $(\pi,\mathcal{H},v)$ of the restriction $\varphi_{|L}$ is a countable direct sum of finite-dimensional unitary representations (Proposition \ref{prop:disj traces and gns}). Denote $T = \left<a\right>$ where $a \in A$ is an element as above. The cyclic $T$-subrepresentation $\rho$ generated by the unit vector $v$ is also a countable direct sum of finite-dimensional unitary representations. This yields a contradiction to the fact that the $T$-representation $\rho$ is mixing by Theorem \ref{theorem:on characters of G admitting AU and AV}. 
\end{proof}

\section{Further results and implications}\label{sec:8}
In this section we prove Theorem \ref{main_thm_general} and its corollaries mentioned in the introduction, and provide some further implications.

\subsection{Synthesis with existing results}
We begin by verifying that all the various cases mentioned in Theorem \ref{main_thm_general} are covered by an amalgamation of existing results \cite{BBH,BBHP} and our Theorem \ref{main_thm}. 

\begin{proof}[Proof of Theorem \ref{main_thm_general}]
    Let $H$ be a semisimple group with $\mathrm{rank}(H) \ge 2$ and $\Gamma \leq H$ be an irreducible lattice. 
    
    If the semisimple group $H$ has some simple factor with Kazhdan's property (T) then  $\Gamma$ is known  to be character rigid by previous work. In fact $\Gamma$ enjoys  a stronger property, namely $\Gamma$ is \textit{charfinite} in the sense of \cite[Definition~1.2]{BBHP}. This is established in \cite[Theorem~A]{BBHP} in case $\Gamma$ is of \emph{product type} (namely, the semisimple group $H$ is not almost simple), and in \cite[Theorem~A]{BBH} in case $H$ is almost simple. 
    
    If the semisimple group $H$ does not  have a factor with Kazhdan's property $T$,  we further assume that the lattice $\Gamma$ is non-uniform and that the characteristic of the underlying global field is different than $2$, in which  case the lattice $\Gamma$ is character rigid by our Theorem \ref{main_thm}.
\end{proof}

\subsection{Stabilizer rigidity for measure preserving actions}
Character rigidity is known to imply the Stuck--Zimmer rigidity property for stabilizers in probability measure preserving actions, see e.g. \cite[Theorem 3.2]{PetersonThom} or \cite[Theorem 2.11]{DM}. We are interested in groups with a possibly infinite center, a case not yet treated in the literature. Hence we include a proof of the following general principle in that setting. Note that the only additional assumption needed for a character rigid group to satisfy the Stuck--Zimmer rigidity property has to do with finite-dimensional representations.

\begin{theorem}\label{thm:stuck_zimmer_char_rig}
Let $\Lambda$ be a character rigid group having at most countably many finite-dimensional unitary representations. Then every ergodic probability measure preserving action $\Lambda \acts(X,\mu)$ is either essentially transitive or satisfies $\mu(\fix(g)) = 0$ for every element $g \in \Lambda \setminus \mathrm{Z}(\Lambda)$.
\end{theorem}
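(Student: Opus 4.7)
The plan is to apply a Vershik-type construction to produce a trace from the action, decompose that trace via the structural hypotheses on $\Lambda$, and translate the decomposition back to properties of the action. I would first set $\varphi(g) = \mu(\fix(g))$. Conjugation invariance of $\varphi$ follows from $\fix(hgh^{-1}) = h\,\fix(g)$ together with $\Lambda$-invariance of $\mu$, and positive-definiteness is cleanest to verify by recognizing $\varphi$ as the restriction to $\mathbb{C}[\Lambda]$ of the canonical faithful normal trace $\tau$ on the crossed product $M = L^\infty(X,\mu) \rtimes \Lambda$, so that $\varphi(g) = \tau(u_g)$. A standard calculation also identifies the GNS Hilbert space of $\varphi$ with the direct integral $\int_X^\oplus \ell^2(\Lambda/\Lambda_x)\, \mathrm{d}\mu(x)$ of quasi-regular representations along the stabilizer map.

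Using the discussion of weakly mixing traces in \S\ref{subsec:2.2}, I would next write $\varphi = t\,\varphi_{\mathrm{wm}} + (1-t)\,\varphi_{\mathrm{nwm}}$, where $\varphi_{\mathrm{nwm}}$ is a countable convex combination of finite-dimensional characters and $\varphi_{\mathrm{wm}}$ is weakly mixing. Character rigidity of $\Lambda$ together with Lemma \ref{lemma:applying finitely many} then guarantees that $\varphi_{\mathrm{wm}}$ vanishes outside $\mathrm{Z}(\Lambda)$, while Proposition \ref{prop:disj traces and gns} yields a corresponding orthogonal decomposition of the GNS representation into a weakly mixing summand and a sum of finite-dimensional summands.

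The dichotomy should now split as follows. If $\varphi_{\mathrm{nwm}} = 0$, then $\varphi = \varphi_{\mathrm{wm}}$ vanishes outside $\mathrm{Z}(\Lambda)$, giving $\mu(\fix(g)) = 0$ for every $g \in \Lambda \setminus \mathrm{Z}(\Lambda)$ and we are done. Otherwise the GNS representation of $\varphi$ contains a nonzero finite-dimensional $\Lambda$-subrepresentation. Under the direct-integral realization, this should force a positive-measure set of fibers $\ell^2(\Lambda/\Lambda_x)$ to admit a nonzero finite-dimensional $\Lambda$-subrepresentation, which in turn forces $[\Lambda : \Lambda_x] < \infty$ on that set. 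Since $x \mapsto [\Lambda : \Lambda_x]$ is $\Lambda$-invariant, ergodicity upgrades this to $[\Lambda : \Lambda_x]$ being $\mu$-essentially a finite constant, so that $\mu$ is carried by a single finite orbit and the action is essentially transitive.

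The main obstacle is the implication that a finite-dimensional subrepresentation of $\int_X^\oplus \ell^2(\Lambda/\Lambda_x)\, \mathrm{d}\mu(x)$ forces finite-index stabilizers on a positive-measure set. The core ingredient is the elementary fact that if $[\Lambda : H] = \infty$ then $\ell^2(\Lambda/H)$ admits no nonzero finite-dimensional $\Lambda$-subrepresentation. Indeed, any such subrepresentation contains a vector $v = \sum_i a_i \delta_{\gamma_i H}$ with finite $\Lambda$-orbit and hence finite-index stabilizer $S \le \Lambda$; the $\ell^2$ condition makes each $S$-orbit on the support of $v$ finite, so $S \cap \gamma_i H \gamma_i^{-1}$ has finite index in $\Lambda$ for some $i$, which is incompatible with $[\Lambda : H] = \infty$. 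The remaining technical point is a measurable-selection argument passing from a global subrepresentation of the direct integral to fiberwise subrepresentations on a positive-measure set of fibers, which is standard but deserves to be stated carefully.
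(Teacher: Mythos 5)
Your overall strategy coincides with the paper's: form the Vershik trace $\tau_\mu(g)=\mu(\fix(g))$, split off the weakly mixing part, and kill it using character rigidity together with Lemma \ref{lemma:applying finitely many}. The difference is in how the dichotomy is closed. The paper argues contrapositively and outsources the key analytic step to \cite[Proposition 3.1]{PetersonThom}: if the action is not essentially transitive then almost every orbit is infinite, and that proposition says the Vershik trace is then weakly mixing, so the whole trace vanishes off the center. You instead try to prove the converse implication directly, by realizing the GNS space inside $\int_X^\oplus \ell^2(\Lambda/\Lambda_x)\,\mathrm{d}\mu(x)$ and arguing that a finite-dimensional subrepresentation forces finite-index stabilizers. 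That route is viable, but as written it has a genuine gap.

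The flawed step is your ``core ingredient'': a nonzero vector in a finite-dimensional unitary subrepresentation need \emph{not} have a finite $\Lambda$-orbit, hence need not have a finite-index stabilizer. (Take $\Lambda=\mathbb{Z}$ acting on $\mathbb{C}$ by an irrational rotation: every nonzero vector has infinite orbit and trivial stabilizer.) So the chain ``finite-dimensional subrepresentation $\Rightarrow$ vector with finite orbit $\Rightarrow$ finite-index stabilizer'' breaks at the first arrow. The fact you want is nevertheless true, and the standard proof goes through the projection rather than a vector: if $V\le \ell^2(\Lambda/H)$ is a nonzero finite-dimensional invariant subspace with orthogonal projection $P$, then $xH\mapsto \|P\delta_{xH}\|^2$ is constant on $\Lambda/H$ (because $P$ commutes with the permutation representation, which acts transitively on the basis) and sums to $\dim V<\infty$, forcing $[\Lambda:H]<\infty$. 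Separately, the passage from a finite-dimensional subrepresentation of the direct integral to fiberwise subrepresentations is not merely a measurable-selection formality: the projection onto a finite-dimensional invariant subspace need not commute with the diagonal algebra $L^\infty(X)$, so it need not be decomposable, and the fiberwise spans $V_x$ of a basis of $V$ are only mapped into one another along orbits rather than being individually invariant. Both issues are resolved in \cite[Proposition 3.1]{PetersonThom}, which is exactly what the paper cites; if you want a self-contained argument you should either reproduce that proof or replace your two problematic steps by the projection-summability argument carried out orbitwise.
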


\begin{proof}
Let $(X,\mu)$ be an ergodic probability measure preserving $\Lambda$-space. Consider the associated Vershik trace $\tau_\mu$ given by 
$$\tau_\mu(g) = \mu(\fix(g)) \quad \forall g \in \Lambda.$$
The corresponding unitary GNS representation $\pi_\mu$ is the representation of $\Lambda$ acting on the Hilbert space $L^2(\cR)$ via multiplication on the left coordinate, where $\cR$ is the orbit equivalence relation of the action. Let $\nu_\mu \in \mathrm{Prob}(\mathrm{\Ch{\Lambda}})$ be the Bochner transform of the Vershik trace $\tau_\mu$.

Assume that the ergodic action $\Lambda \acts (X, \mu)$ is not essentially transitive. Hence almost every orbit of the action is infinite. According to \cite[Proposition 3.1]{PetersonThom}, this implies that the trace $\tau_\mu$ is weakly mixing. Character rigidity and Lemma \ref{lemma:applying finitely many} implies that every weakly mixing trace of $\Lambda$ is supported on the center. In particular $\tau_\mu$ is supported on the center of $\Lambda$. Namely
$\tau_\mu(g) = \mu(\fix(g)) = 0$ for all $g \in \Lambda \setminus \mathrm{Z}(\Lambda)$.
\end{proof}

We may now deduce Corollary \ref{cor:SZ intro}.

\begin{proof}[Proof of Corollary \ref{cor:SZ intro}]
Let $\Gamma$ be a non-uniform higher-rank irreducible lattice as in Theorem \ref{main_thm_general}. It is character rigid by that theorem.
It was established in Theorem \ref{thm:stuck_zimmer_char_rig} that character rigidity implies the desired conclusion regarding stabilizers of probability measure preserving actions, as long as the lattice in question has at most countably many finite-dimensional unitary representations. This fact holds true for irreducible higher-rank lattices in algebraic groups, as a consequence of Margulis' superrigidity theorem; see \cite[Proposition 7.1]{BBHP}. Interestingly, in the generality of Theorem \ref{main_thm} it can be deduced directly from the results of the present work (without invoking superrigidity) that any finite-dimensional unitary representation of the lattice $\Gamma$ factors through a finite quotient (see Remark \ref{remark:redisovering}). 
\end{proof}

\begin{remark}\label{rem:charfinite}
We conclude that any lattice $\Gamma$ as in Theorem \ref{main_thm_general} is charfinite. Indeed, the works \cite{BBHP,BBH} establish that the lattice $\Gamma$ is charmenable. The extra conditions needed for a charmenable group to be  charfinite are character rigidity, finite amenable radical and having at most countably many finite-dimensional unitary representations. The latter property is provided by \cite[Lemma 7.1]{BBHP}. 
\end{remark}

\subsection{Semisimple Lie groups with infinite center}

Lastly, we aim to prove Theorem \ref{inf_ctr} for lattices in higher-rank semisimple Lie groups with a possibly infinite center. In order to apply Theorem \ref{thm:stuck_zimmer_char_rig}, we will need to show that such lattices enjoy the property of admitting  at most countably many finite-dimensional unitary representations.

\begin{lemma} \label{finitly many reps comensurability}
Let $\Lambda$ be a countable group and $\Lambda_0$ a finite-index subgroup. Then  $\Gamma$ has at most countably many finite-dimensional unitary representations if and only if $\Sigma$ does.
\end{lemma}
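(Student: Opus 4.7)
The plan is to reduce the claim to counting isomorphism classes of \emph{irreducible} finite-dimensional unitary representations, and then to use induction and restriction along with Frobenius reciprocity. Since every finite-dimensional unitary representation of a discrete group decomposes as a finite direct sum of irreducibles (uniquely up to isomorphism), a countable group admits at most countably many finite-dimensional unitary representations if and only if it admits at most countably many irreducible ones. Throughout, I denote by $\widehat\Lambda_{\mathrm{fd}}$ and $\widehat{\Lambda_0}_{\mathrm{fd}}$ the sets of isomorphism classes of irreducible finite-dimensional unitary representations of $\Lambda$ and $\Lambda_0$, respectively.

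For the direction where $\Lambda$ has at most countably many irreducibles, I would associate to each irreducible $\sigma \in \widehat{\Lambda_0}_{\mathrm{fd}}$ the induced representation $\mathrm{Ind}_{\Lambda_0}^\Lambda \sigma$, which is finite-dimensional of dimension $[\Lambda:\Lambda_0]\cdot \dim\sigma$. Being finite-dimensional and unitary, it is completely reducible, so it decomposes as a finite direct sum of irreducibles in $\widehat\Lambda_{\mathrm{fd}}$. By Frobenius reciprocity, $\sigma$ is a subrepresentation of the restriction $\pi|_{\Lambda_0}$ for at least one such irreducible constituent $\pi$. Conversely, for each $\pi \in \widehat\Lambda_{\mathrm{fd}}$ the restriction $\pi|_{\Lambda_0}$ has at most $\dim\pi$ isomorphism classes of irreducible subrepresentations. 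Hence the map $\sigma \mapsto \pi$ exhibits $\widehat{\Lambda_0}_{\mathrm{fd}}$ as a countable union of finite sets indexed by $\widehat\Lambda_{\mathrm{fd}}$, giving the bound.

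For the reverse direction, given an irreducible $\pi \in \widehat\Lambda_{\mathrm{fd}}$, the restriction $\pi|_{\Lambda_0}$ is a non-zero finite-dimensional unitary representation of $\Lambda_0$, so it contains some irreducible $\sigma \in \widehat{\Lambda_0}_{\mathrm{fd}}$. By Frobenius reciprocity, $\pi$ then appears as an irreducible constituent of $\mathrm{Ind}_{\Lambda_0}^\Lambda \sigma$. As above, $\mathrm{Ind}_{\Lambda_0}^\Lambda \sigma$ has at most $[\Lambda:\Lambda_0]\cdot \dim\sigma$ irreducible constituents. Thus $\widehat\Lambda_{\mathrm{fd}}$ is exhausted by the (countable) collection of irreducible constituents of $\{\mathrm{Ind}_{\Lambda_0}^\Lambda \sigma : \sigma \in \widehat{\Lambda_0}_{\mathrm{fd}}\}$, which is a countable union of finite sets.

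There is no substantial obstacle here; the only care needed is to invoke complete reducibility of finite-dimensional unitary representations (so that restriction/induction decompose into finite direct sums of irreducibles) and the usual Frobenius reciprocity for finite-index inclusions of discrete groups. The entire argument is symmetric in the sense that induction is a two-sided adjoint of restriction for finite-index subgroups, which is precisely what enables both implications to be handled in the same way.
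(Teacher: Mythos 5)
Your argument is correct and is exactly the route the paper has in mind: its proof is just the one-line remark that the verification proceeds ``by means of inductions and restrictions of unitary representations,'' which is what you carry out in detail via Frobenius reciprocity for the finite-index inclusion. The reduction to counting irreducibles and the finite-fiber counting in both directions are sound.
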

\begin{proof}
We leave the straightforward verification of this fact to reader, by means of inductions and restrictions of unitary representations.
\end{proof}

\begin{lemma} \label{countably many reps central extension}
    Let $\Gamma$ be a finitely generated group and $K \leq \Gamma$ a finite central subgroup. Denote $\Delta = \Gamma / K$. If $\Delta$ has at most countably many finite-dimensional unitary representations then so does $\Gamma$.
\end{lemma}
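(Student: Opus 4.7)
The plan is to bound finite-dimensional unitary representations of $\Gamma$ by analyzing their restriction to the finite central subgroup $K$. Any such representation $\pi : \Gamma \to U(n)$ restricts to a unitary representation $\sigma := \pi|_K$ of the finite abelian group $K$, and $\sigma$ is determined up to $U(n)$-conjugacy by the multiplicities in its $K$-isotypic decomposition. There are therefore only finitely many possibilities for $\sigma$, so it suffices to bound, for each fixed $\sigma$, the isomorphism classes of $\pi$ satisfying $\pi|_K = \sigma$. For any such $\pi$, the image $\pi(\Gamma)$ lies in the centralizer $C := Z_{U(n)}(\sigma(K))$, which is a compact Lie group in which $\sigma(K)$ is central.

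The composition $\bar\pi := q \circ \pi$ of $\pi$ with the quotient homomorphism $q : C \to C/\sigma(K)$ is trivial on $K$ and therefore factors through a continuous homomorphism $\Delta \to C/\sigma(K)$. Since the compact Lie group $C/\sigma(K)$ admits a faithful finite-dimensional unitary representation into some $U(m)$, each such $\bar\pi$ determines a finite-dimensional unitary representation of $\Delta$, and by assumption there are only countably many of these up to isomorphism. For fixed $\sigma$ and $\bar\pi$, the set of lifts $\pi$ satisfying $\pi|_K = \sigma$ and $q \circ \pi = \bar\pi$ forms a torsor over the abelian group $\Hom(\Delta, \sigma(K))$: two such lifts differ by a map $\Gamma \to \sigma(K)$ that is trivial on $K$, and centrality of $\sigma(K)$ inside $C$ guarantees this difference is indeed a homomorphism. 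This torsor is countable, because $\sigma(K)$ is finite abelian and hence embeds into $(S^1)^r$ for some $r$, while $\hat{\Delta}$ is countable, being a subset of the countable set of finite-dimensional unitary representations of $\Delta$.

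Combining the finitely many possibilities for $\sigma$, the countably many isomorphism classes of $\bar\pi$, and the countably many lifts for each, summed over $n \in \N$, gives countably many finite-dimensional unitary representations of $\Gamma$. The principal technical subtlety is matching the count of $\bar\pi$ ``up to $C/\sigma(K)$-conjugacy'' (which is what parametrizes $\pi$ up to $U(n)$-conjugacy after fixing $\sigma$) with the countability coming from the hypothesis, which is stated in terms of $U(m)$-conjugacy via the embedding $C/\sigma(K) \hookrightarrow U(m)$. This is handled using the finiteness of the outer automorphism group of a compact Lie group, which bounds the fibers of the forgetful map between the corresponding spaces of conjugacy classes, so that countability transfers between the two settings.
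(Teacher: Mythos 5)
Your argument takes a genuinely different route from the paper's. The paper first reduces to the case where $\Gamma$ is separated by its finite-dimensional unitary representations, deduces residual finiteness, picks a finite quotient $\alpha:\Gamma\to F$ that is injective on $K$, embeds $\Gamma$ diagonally as a finite-index subgroup of $\Delta\times F$, and then invokes Lemma \ref{finitly many reps comensurability}. Your direct lifting argument --- fixing $\sigma=\pi|_K$ (finitely many choices per dimension), passing to the centralizer $C=Z_{U(n)}(\sigma(K))$, quotienting by the central subgroup $\sigma(K)$, and observing that the lifts of a fixed $\bar\pi$ form a torsor over $\Hom(\Delta,\sigma(K))$, which is even finite since $\Gamma$ (hence $\Delta$) is finitely generated --- is sound in its skeleton and would be a legitimate alternative.

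However, the step you yourself flag as the \enquote{principal technical subtlety} contains a genuine gap. You need that $\Hom(\Delta,G)$ modulo $G$-conjugacy is countable for $G=C/\sigma(K)$, and you propose to deduce this from the hypothesis by embedding $G\hookrightarrow U(m)$ and using finiteness of the outer automorphism group to control the fibers of the map from $G$-conjugacy classes to $U(m)$-conjugacy classes. This does not work: if $\rho_1,\rho_2:\Delta\to G$ are conjugate by some $u\in U(m)$, then $u$ need not normalize $G$ --- it merely conjugates the compact subgroup $\overline{\rho_1(\Delta)}$ into $G$ --- so it induces no automorphism of $G$, outer or otherwise, and $\mathrm{Out}(G)$ gives no control. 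The fiber over the class of $\rho$ is the double coset space $G\backslash\{u\in U(m): u\rho(\Delta)u^{-1}\subseteq G\}/Z_{U(m)}(\rho)$, whose countability is not automatic. It can plausibly be established with additional input (for instance, the hypothesis forces the closure $H=\overline{\rho(\Delta)}$ to have finite abelianization, since otherwise $\Delta^{ab}$ would be infinite and $\Delta$ would carry uncountably many one-dimensional representations; one could then try to exploit Weil-type rigidity of homomorphisms between compact Lie groups), but as written this step is unjustified, and it is exactly the point the paper's shorter argument sidesteps entirely.
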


\begin{proof}
We may assume without loss of  generality  that the group $\Gamma$ is separated by its finite-dimensional unitary representations. 
Otherwise, we may replace $\Gamma$ with its quotient  by the normal subgroup consisting  of all elements that are trivial in every finite-dimensional unitary representation. 
    
Since $\Gamma$ is finitely generated and its elements are separated by finite-dimensional unitary representations, it is residually finite.
    Therefore there exists a finite group $F$ and a homomorphism  $\alpha:\Gamma \to F$ such that $\alpha_{|K}$ is injective. Thus $\Gamma$ is isomorphic to a finite-index subgroup of the product $\Delta \times F$ under the diagonal embedding. 
    
    The product group $\Delta \times F$ has countably many finite-dimensional unitary representations, as any irreducible unitary representation of $\Delta \times F$ is a tensor product of irreducible representations of $F$ and $\Delta$. 
  This property passes to the subgroup $\Gamma$ by Lemma \ref{finitly many reps comensurability}.
\end{proof}

We  return to the setting of Theorem \ref{inf_ctr}. Let $H$ be a connected semisimple Lie group with $\mathrm{rank}_\mathbb{R}(H) \ge 2$, without compact factors and possibly with infinite center. Let  $\Lambda \leq H$ be an irreducible lattice, in the sense of \cite[Definition~IX.6.2]{Margulis}. We assume  that $H$ either has a factor with Kazhdan's property (T), or that the lattice $\Lambda$ is non-uniform.

\begin{lemma}
\label{lem:Lambda with infinite center has countably many fin dim}
The lattice $\Lambda$ as above has at most countably many finite-dimensional unitary representations.
\end{lemma}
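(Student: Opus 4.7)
\emph{Plan.} The strategy is to reduce to the center-free case and show that every finite-dimensional irreducible unitary representation of $\Lambda$ has central character with finite image, enabling a counting argument.

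First, I would set $Z = Z(H)$, which is discrete since $H$ is connected semisimple, and pass to the adjoint quotient $\bar H = H/Z$, a center-free connected semisimple Lie group with no compact factors and $\mathrm{rank}_\R(\bar H) \geq 2$. The image $\bar \Lambda$ of $\Lambda$ in $\bar H$ is an irreducible lattice; since its center is trivial, \cite[Proposition~7.1]{BBHP} applies, so $\bar \Lambda$ has at most countably many finite-dimensional unitary representations up to equivalence.

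The key technical claim is that for every finite-dimensional irreducible unitary representation $\pi : \Lambda \to U(n)$, the central character $\chi_\pi : Z(\Lambda) \to S^1$ (defined via Schur's lemma) has finite image. Granting this claim, the conclusion would follow by counting: the finitely generated abelian group $Z(\Lambda) \leq Z$ admits only countably many characters with finite image, and for each such $\chi$, irreducible representations with central character $\chi$ factor through $\Lambda/\ker(\chi)$, which is a central extension of $\bar \Lambda$ by the finite group $Z(\Lambda)/\ker(\chi)$. Lemma \ref{countably many reps central extension} combined with the countability for $\bar \Lambda$ then gives countably many such representations; summing over characters yields countably many irreducibles, and hence countably many finite-dimensional unitary representations of $\Lambda$ overall.

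To prove the key claim I would invoke Margulis' superrigidity theorem for irreducible higher-rank lattices, which --- after suitable modifications using finite-index subgroups and projective twists --- extends $\pi$ to a continuous representation of $H$. Any continuous finite-dimensional representation of the connected semisimple Lie group $H$ factors through a linear quotient of $H$, and the quotient map annihilates the infinite part of the discrete center $Z(H)$; consequently $\chi_\pi$ has finite image. Representations whose kernel has finite index in $\Lambda$, detected via Margulis' normal subgroup theorem (applicable in both our hypotheses, via property (T) or non-uniformity), automatically satisfy the claim. The hard part will be applying superrigidity to representations whose image is necessarily precompact in $U(n)$, since the classical extension theorem treats non-precompact images; this should be handled by passing to the associated projective representation modulo the center, or by exploiting the underlying arithmetic structure of $\Lambda$ (via Margulis' arithmeticity theorem) to directly bound the possible central characters.
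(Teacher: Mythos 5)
Your reduction and counting framework is exactly the paper's: reduce to showing that every irreducible finite-dimensional unitary representation $\pi$ of $\Lambda$ has central character with finite image, then factor through quotients $\Lambda/T$ by finite-index subgroups $T \le \mathrm{Z}(\Lambda)$, which are finite central extensions of $\Ad(\Lambda)$, and conclude via \cite[Proposition~7.1]{BBHP} and Lemma \ref{countably many reps central extension}. That part is fine.

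The gap is in your proof of the key claim. You propose to extend $\pi$ to $H$ via Margulis superrigidity, but you yourself note that the extension theorem does not apply when the image is precompact --- and here the image \emph{always} lies in the compact group $U(n)$, so the precompact case is not an edge case but the only case. Your two suggested fixes are not workable as stated: passing to the projective representation modulo the center destroys exactly the information you need (the central character), and "exploiting the arithmetic structure" of $\Lambda$ is delicate because a lattice with infinite center need not be residually finite or linear, so arithmeticity is only available for $\Ad(\Lambda)$. The paper's actual argument for the key claim is elementary and avoids superrigidity entirely: if $\pi$ is irreducible and $\pi(\mathrm{Z}(\Lambda))$ were infinite, then by Schur's lemma $\pi(\mathrm{Z}(\Lambda))$ consists of scalar matrices, so $\det \circ \pi : \Lambda \to S^1$ would have infinite image; this contradicts the finiteness of the abelianization of an irreducible higher-rank lattice \cite[\S IX.6.18(B)]{Margulis}. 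You should replace the superrigidity step with this determinant argument (or supply a genuine proof of an extension theorem for precompact images, which you have not done).
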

\begin{proof}
It will be enough to show that any irreducible finite-dimensional unitary representation $\pi: \Lambda \to U(d)$ satisfies 
\sloppy
$\left[\mathrm{Z}(\Lambda): \ker(\pi) \cap \mathrm{Z}(\Lambda)\right] < \infty$.
Indeed, given any finite-index subgroup $T \le \mathrm{Z}(\Lambda)$,  the quotient $\Lambda / T$  is a finite central extension of the lattice $\Ad(\Lambda)$. The group $\Ad(\Lambda)$ is a lattice in a higher-rank semisimple algebraic $\R$-group, and as such it admits at most countably many finite-dimensional unitary representations by \cite[Proposition~7.1]{BBHP}. Lemma \ref{countably many reps central extension} implies that the same fact is true for the quotient $\Lambda /T$ .

Assume by contradiction that there is an irreducible finite-dimensional unitary representation $\pi: \Lambda \to U(d)$ satisfying $|\pi(\mathrm{Z}(\Lambda))| = \infty$.
Since $\pi$ is irreducible, Schur's lemma guarantees that $\pi(\mathrm{Z}(\Lambda))  \leq U(d)$ consists of scalar matrices.
It follows that the image of the homomorphism $\det \circ \pi: \Lambda \to S^1$ is infinite.
 This contradicts the fact that $\Lambda$ has finite abelianization \cite[\S IX.6.18(B)]{Margulis}. This concludes the proof.
\end{proof}

\begin{proof}[Proof of Theorem \ref{inf_ctr}]
The adjoint Lie group $\Ad(H)$ is the identity component of the algebraic $\mathbb{R}$-group $G = \overline{\Ad(H)}^\mathrm{Z}$ \cite[Theorem I.2.3.1]{Margulis}. The group $G$ has no compact factors, and it has a factor with Kazhdan's property (T) if and only if $H$ does.
It follows from \cite[$(a)$ and $(b)$ in Lemma IX.6.1]{Margulis} that $\Ad(\Lambda) \leq \overline{\Ad(H)}^{\mathrm{Z}}$ is an irreducible lattice.
Thus $\Ad(\Lambda)$ is character rigid by Theorem \ref{main_thm_general}. Moreover $\Ad(\Lambda)$ has at most countably many finite-dimensional unitary representations by \cite[Proposition~7.1]{BBHP}.
Proposition \ref{prop:char_rig_extensions} implies that the lattice $\Lambda$ is character rigid as well, being a central extension of $\Ad(\Lambda)$. The fact that the  lattice $\Lambda$ satisfies the conclusion of Corollary \ref{cor:SZ intro} (i.e. it enjoys the Stuck--Zimmer stabilizer rigidity property) follows from  Theorem \ref{thm:stuck_zimmer_char_rig} combined with Lemma \ref{lem:Lambda with infinite center has countably many fin dim}.
\end{proof}

\appendix

\section{Classification of lattices in characteristic zero}\label{appen}

Assume that $H$ is a semisimple group\footnote{The notion of a semisimple group in our sense is defined above on p. \pageref{def page: semisimple group} of the introduction.} in characteristic zero and that no simple factor of $H$ has Kazhdan's property (T). In particular $H$ has  no compact factors. If the group $H$ admits an irreducible  non-uniform lattice, then at least one of the factors of $H$ must be locally isomorphic to the real Lie group $\mathrm{SO}(n,1)$ for some $n \geq 2$ or $\mathrm{SU}(n,1)$ for some $n \ge 1$. This forces the algebraic $F$-group $\mathbf{G}$ introduced in our Standing Assumptions \S\ref{standing assumption} to have the same Tits index as the group $\mathrm{SO}(n,1)$ or $\mathrm{SU}(n,1)$. Hence the group $\mathbf G$ must come from a very restrictive list. 

In this appendix we briefly recall the classification of semisimple algebraic groups by means of the Tits index and the anisotropic kernel, and give the full list of all possible (arithmetic) irreducible non-uniform lattices in products of rank-one groups without Kazhdan's property (T) and in characteristic zero.

\subsection{Classification of semisimple algebraic groups}\label{subsec:appendix1} 

Algebraic groups over algebraically closed fields are classified up to central isogeny by a \textit{Dynkin diagram}. 

The classification over an arbitrary field $k$ with algebraic closure $K$ is more complicated. For this, let $\mathbf{H}$ be a $k$-algebraic group, with maximal torus $\mathbf{T}$ and maximal $k$-split torus $\mathbf{S}$. We denote by $\Delta$ a set of simple roots of $\mathbf{H}$, and by $\Delta_0$ the set of roots in $\Delta$ which vanish on the $k$-split torus $\mathbf{S}$. We denote by $\mathcal{D}$ the Dynkin diagram of $\mathbf{H}$. There is an action  of the Galois group $\mathrm{Gal}(K/k)$ on $\Delta$  called the \textit{$*$-action} \cite[2.2]{Tits}. The  $*$-action is via automorphisms of the Dynkin diagram. Further, the $*$-action preserves the subset $\Delta_0$, and two roots in $\Delta-\Delta_0$ have the same restriction to $\mathbf{S}$ if and only if they are in the same orbit of the $*$-action. The data ($\mathcal{D}$, $\Delta_0$, $\mathrm{Gal}(K/k) \acts \mathcal{D}$) is called the \textit{Tits index} of the $k$-group $\mathbf{H}$. 

The \textit{semisimple anisotropic kernel} of $\mathbf{H}$ is  the derived subgroup of the Levi subgroup of the minimal $k$-parabolic subgroup of $\mathbf{H}$. It is denoted $\mathrm{DZ}(\mathbf S)$. 

The celebrated classification theorem of Tits \cite[Theorem~2.7.1]{Tits} is the following.
\begin{theorem}[Tits]\label{thm:classification of algebraic groups}
    A semisimple group defined over $k$ is determined up to $k$-isomorphism by its $K$-isomorphism class, its Tits index and its semisimple anisotropic kernel.
\end{theorem}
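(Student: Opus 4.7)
The plan is to approach this via Galois descent, viewing semisimple $k$-groups as twisted forms of a given split group. Fix a split semisimple $K$-group $\mathbf{H}_0$ whose $K$-isomorphism class (equivalently, root datum up to isogeny) matches the given one. Then $k$-forms of $\mathbf{H}_0$ are classified by the pointed set $H^1(\mathrm{Gal}(K/k), \mathrm{Aut}(\mathbf{H}_0))$, and the exact sequence
\[
1 \to \mathbf{H}_0^{\mathrm{ad}}(K) \to \mathrm{Aut}(\mathbf{H}_0)(K) \to \mathrm{Out}(\mathbf{H}_0) \to 1
\]
induces a map $H^1(\mathrm{Gal}(K/k), \mathrm{Aut}(\mathbf{H}_0)) \to H^1(\mathrm{Gal}(K/k), \mathrm{Out}(\mathbf{H}_0))$. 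The image of the class of $\mathbf{H}$ under this map recovers precisely the $*$-action on $\Delta$, i.e. the Galois-theoretic part of the Tits index.

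Given two $k$-groups $\mathbf{H}$ and $\mathbf{H}'$ sharing all three data, I would first use the matching of Tits indices to match up their minimal $k$-parabolic subgroups $\mathbf{P}$ and $\mathbf{P}'$. These admit Levi decompositions $\mathbf{P} = \mathbf{M}\ltimes\mathbf{U}$ with $\mathbf{M} = Z_{\mathbf{H}}(\mathbf{S})$ the centralizer of a maximal $k$-split torus, and $\mathrm{DZ}(\mathbf{S}) = [\mathbf{M},\mathbf{M}]$ is precisely the anisotropic kernel. The second datum provides a $k$-isomorphism $\mathrm{DZ}(\mathbf{S}) \cong \mathrm{DZ}(\mathbf{S}')$, which one can extend to an isomorphism $\mathbf{M} \to \mathbf{M}'$ after matching central tori (which is possible since the split part is pinned down by the relative root data encoded in the Tits index, and the anisotropic part is the given kernel).

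Next, I would extend this isomorphism through the unipotent radical $\mathbf{U}$ using the standard fact that over any field, such a unipotent radical is a $k$-split successive extension of vector groups controlled by the root data with its Galois action --- so $\mathbf{U}$ is determined by the Tits index alone. One thus obtains a $k$-isomorphism $\mathbf{P} \to \mathbf{P}'$. The passage from the minimal parabolic to the full group uses the Bruhat decomposition together with Steinberg-type generators and relations for the relative root system, combined with the rational conjugacy of minimal $k$-parabolics and maximal $k$-split tori to show that the isomorphism of $\mathbf{P}$'s extends (uniquely up to inner automorphism) to a global isomorphism $\mathbf{H} \to \mathbf{H}'$.

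The main obstacle will be controlling the extension at the last step: different inner forms can share the same Tits index yet be inequivalent. This is where the anisotropic kernel as an independent invariant is indispensable --- the cohomological obstructions to gluing the isomorphism globally are governed by $H^1$ of the adjoint group of the anisotropic kernel, and the hypothesis that the anisotropic kernels are $k$-isomorphic is exactly what kills those obstructions. Unwinding this cohomological input into an explicit $k$-isomorphism is the technical heart of the argument; on the first pass one produces the isomorphism only on a Zariski-dense open (built from opposite big cells), and one then verifies that it extends regularly using the rigidity of reductive group schemes.
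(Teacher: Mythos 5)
This statement is not proved in the paper at all: it is Tits' classification theorem, quoted verbatim with a citation to Tits' Boulder article (Theorem 2.7.1 there), and the appendix only \emph{uses} it. So there is no in-paper argument to compare against; what you have written is an attempt to reprove a deep classical theorem, and it has to be judged on its own.

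Your outline follows the standard descent-theoretic strategy (forms classified by $H^1(\mathrm{Gal}(K/k),\mathrm{Aut}(\mathbf{H}_0))$, the $*$-action as the image in $H^1$ of the outer automorphism group, reduction to the Levi of a minimal $k$-parabolic), which is the right framework. But there are two genuine gaps. First, the claim that the unipotent radical $\mathbf{U}$ of the minimal $k$-parabolic ``is determined by the Tits index alone'' is false as stated: $\mathbf{U}$ is filtered by the relative root groups $\mathbf{U}_{(a)}$, and these are $\mathbf{M}$-modules whose $k$-structure depends on the anisotropic kernel and its action, not merely on $(\mathcal{D},\Delta_0,*)$. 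Even after fixing an isomorphism $\mathbf{M}\to\mathbf{M}'$, matching the pairs $(\mathbf{U},\mathbf{M}\acts\mathbf{U})$ and $(\mathbf{U}',\mathbf{M}'\acts\mathbf{U}')$ is something that must be argued, not quoted. Second, and more seriously, the final step --- passing from an isomorphism of parabolics to an isomorphism of the groups --- is exactly the content of the theorem, and your proposal only asserts it: the sentence claiming that the obstructions ``are governed by $H^1$ of the adjoint group of the anisotropic kernel'' and are ``killed'' by the hypothesis is a restatement of what is to be proved. The actual argument realizes $\mathbf{H}$ and $\mathbf{H}'$ as inner twists of the common quasi-split form by cocycles that can be taken with values in (the adjoint quotient of) the anisotropic kernel, and then shows that a $k$-isomorphism of anisotropic kernels \emph{compatible with the index identification} makes these cocycles cohomologous; the compatibility is essential and is silently omitted in your sketch (an abstract isomorphism of the kernels alone does not suffice, which is a well-known subtlety in the precise formulation of Tits' theorem). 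Since the paper treats this as a black box, the honest course is to cite Tits rather than to sketch a proof with these steps missing.
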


The possible Tits indices were studied by Tits. For each non-exceptional Tits index, he describes a classical representative of the isogeny class  \cite[Table~II]{Tits}.

Given a Tits index of some semisimple algebraic group $\mathbf{L}$ defined over a field $k$, and a field extension $k'$  of $k$, we can deduce the possible Tits indices of $\mathbf{L}$ regarded as an $k'$-group: roughly speaking, orbits of the Galois action may split, and the set $\Delta_0$ may become smaller. In the other direction, when one knows the Tits index over $k'$ and wants to understand the possible indices over $k$ (and hence the possible $k$-forms), roughly speaking, the orbits and the set $\Delta_0$ may now become bigger.
We refer the reader to \cite{Tits} for the complete details.

\subsection{Tits indices for the relevant ${F}$-forms}

We turn to the situation which is relevant for us. Namely, let $F$ be a number field and $\mathbf{H}$ be an $F$-isotropic algebraic $F$-group. Let $S \subset \mathscr R$ be a set of valuations of $F$ that contains all the Archimedean valuations $\mathscr R_\infty$. We denote by $R=F(S)$ the ring of $S$-integral elements of $F$, and write $\Gamma=\mathbf{H}(F(R))$. We assume  $\abs{S} \geq 2$ so that $\Gamma$ is an irreducible lattice in a product group. 

We further restrict our attention to those cases where character rigidity for the lattice $\Gamma$ does not follow from existing works that rely on Kazhdan's property (T). Namely, we assume that $\prod_{v \in S} \mathbf{H}(F_v)$ has no factor with property (T). Since we focus on non-uniform lattices, our product group has some Archimedean factor, which must be locally isomorphic to either $\mathrm{SO}(n,1)$ for some $n \geq 2$ or  $\mathrm{SU}(n,1)$ for some $n \geq 1$.

We  now use the classification Theorem \ref{thm:classification of algebraic groups} and Tits' table \cite[Table~II]{Tits} in order to find the relevant $F$-groups. Namely, we will classify $F$-isotropic algebraic $F$-groups $\mathbf{H}$ such that $\mathbf{H}(F_v)$ is locally isomorphic to either $\mathrm{SO}(n,1)$ or  $\mathrm{SU}(n,1)$ for some  Archimedean valuation $v \in S$.

We consider the $\mathrm{SO}(n,1)$ case first. There are three possible Tits indices for $\mathrm{SO}(n,1)$, depending on the value of $n$ mod $4$. We represent them diagrammatically, in the manner explained above.
\begin{enumerate}
    \item If $n \equiv 1$ mod $4$, the Tits index is:
    \[
    \begin{tikzpicture}
        \path[-] (0,0) node (a1) {$\bullet$}
        (0.5,0) node (dots) {$\cdots$}
        (1,0) node (a2) {$\bullet$}
        (2,0) node (a3) {$\bullet$}
        (3,0.5) node (a4) {$\bullet$}
        (3,-0.5) node (a5) {$\bullet$};
        \draw (a2) -- (a3) -- (a4);
        \draw (a3) -- (a5);
        \draw (a1) circle[radius=2.3mm];
        circle[radius=2.3mm]
    \end{tikzpicture}
    \]
    \item If $n \equiv 3$ mod $4$, the Tits index is:
    \[
    \begin{tikzpicture}
        \path[-] (0,0) node (a1) {$\bullet$}
        (0.5,0) node (dots) {$\cdots$}
        (1,0) node (a2) {$\bullet$}
        (2,0) node (a3) {$\bullet$}
        (3,0.5) node (a4) {$\bullet$}
        (3,-0.5) node (a5) {$\bullet$};
        \draw (a2) -- (a3) -- (a4);
        \draw (a3) -- (a5);
        \draw (a1) circle[radius=2.3mm];
        \draw [<->] (3.5,0.5) to [out=-45,in=45] (3.5,-0.5);
    \end{tikzpicture}
    \]
    \item If $n$ is even, the Tits index is:
    \[
    \begin{tikzpicture}
        \path[-] (0,0) node (a1) {$\bullet$}
        (0.5,0) node (dots) {$\cdots$}
        (1,0) node (a2) {$\bullet$}
        (2,0) node (a3) {$\bullet$}
        (2.5,0) node (arrow) {$\implies$}
        (3,0) node (a4) {$\bullet$};
        \draw (a2) -- (a3);
        \draw (a1) circle[radius=2.3mm];
    \end{tikzpicture}
    \]
\end{enumerate}
In light of the assumptions on the algebraic $F$-group $\mathbf{H}$, and  following the explanation in the last paragraph of \S\ref{subsec:appendix1}, we deduce that the $F$-Tits index of $\mathbf{H}$ is the same as its $F_v$-Tits index (except maybe the Galois group acts non trivially on the diagram $D_n$ over $F$ and not over $F_v$).
By the classification of Tits \cite[Table~II]{Tits}, the group $\mathbf{H}(F)$ is up to isogeny  $\mathrm{SO}(q)$, where $q$ is some quadratic form over $F^{n+1}$ with index one. 
Tits' notations for these indices are  $^2\text{D}_{m,1}$, $^1\text{D}_{m,1}$ or $\text{B}_{m,1}$, where $m=\lfloor \frac{n+1}{2} \rfloor$. 
\begin{proposition}\label{Form1}
    Any irreducible non-uniform lattice in a semisimple group admitting a factor locally isomorphic to the real semisimple Lie group  $\mathrm{SO}(n,1)$, has a central extension which is commensurable to the arithmetic group $\mathbf{H}(R)$, where:
    \begin{enumerate}
        \item $F$ is a number field and $R=F(S)$ is a localization of its ring of integers. If the field $F$ is either $\mathbb{Q}$ or an imaginary quadratic field, then the localization is necessarily non trivial.
        \item $\mathbf{H}$ is the algebraic $F$-group $\mathrm{SO}(q)$, where $q$ is a quadratic form over $F^{n+1}$ with index $1$.
    \end{enumerate}
\end{proposition}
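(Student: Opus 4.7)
The plan is to combine Proposition \ref{prop:notationIsGood} (which already reduces any irreducible non-uniform higher-rank lattice to an $S$-arithmetic group satisfying the Standing Assumptions \S\ref{standing assumption}) with Tits' classification as recalled in \S\ref{subsec:appendix1}, together with a mild unit-theorem calculation.

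Given an irreducible non-uniform lattice $\Lambda$ in a semisimple group $H$ admitting a factor locally isomorphic to $\mathrm{SO}(n,1)$, I would first apply Proposition \ref{prop:notationIsGood} to produce a global field $F$, a finite set of valuations $S$ with $\mathscr{R}_\infty \subset S$ and $|S| \geq 2$, and a connected, simply connected, absolutely almost simple $F$-group $\mathbf{G}$ with $\mathrm{rank}_F(\mathbf{G}) = 1$, such that $\Gamma = \mathbf{G}(R)$ (with $R = F(S)$) is a central extension of some lattice commensurable to $\Lambda$. Since $\mathrm{SO}(n,1)$ arises only over Archimedean local fields of characteristic zero, the global field $F$ is forced to be a number field.

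Next, the $\mathrm{SO}(n,1)$-factor of $H$ corresponds to an Archimedean place $v \in \mathscr{R}_\infty \subset S$ at which $\mathbf{G}(F_v)$ is locally isomorphic to $\mathrm{SO}(n,1)$. As laid out in the paragraphs preceding the proposition, the $F_v$-Tits index of $\mathbf{G}$ must then be one of the three diagrams drawn above, and the $F$-Tits index can only be a refinement (Galois $*$-orbits may merge and the subset $\Delta_0$ may grow). A direct inspection of Tits' table \cite[Table II]{Tits} shows that the $F$-forms refining these $\mathbb{R}$-forms are, up to central isogeny, of the shape $\mathbf{G} = \mathrm{SO}(q)$ for a non-degenerate quadratic form $q$ on $F^{n+1}$ of Witt index $1$ over $F$. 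Since the isogeny between $\mathbf{G}$ and $\mathrm{SO}(q)$ induces a commensurability on the level of $R$-points \cite[Lemma I.3.1.1]{Margulis}, combining it with the central extension supplied by Proposition \ref{prop:notationIsGood} yields the desired central extension of $\Lambda$ commensurable to $\mathrm{SO}(q)(R)$, establishing part (2).

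Finally, the condition $|S| \geq 2$ together with the Dirichlet--Hasse--Chevalley unit theorem (Remark \ref{rem:Dirichlet}) forces the ring $R$ to contain a unit of infinite order. Since the rings of integers of $\mathbb{Q}$ and of the imaginary quadratic fields are precisely the number rings with finite unit group, in those two cases one must have $S \supsetneq \mathscr{R}_\infty$, which is the non-triviality of the localization claimed in part (1). The main obstacle, though essentially bookkeeping rather than a deep issue, is the careful unwinding of Tits' classification over $F$: one must verify that each possible Galois $*$-action on the Dynkin diagram refining the three $\mathbb{R}$-indices is indeed realized by a quadratic form, ruling out any exotic $F$-form with an anisotropic kernel of a different type.
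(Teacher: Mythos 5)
Your proposal is correct and follows essentially the same route as the paper: reduce to the Standing Assumptions via Proposition \ref{prop:notationIsGood}, identify the $F$-Tits index with the $F_v$-index of $\mathrm{SO}(n,1)$ (up to enlarging the Galois $*$-action), read off the classical form $\mathrm{SO}(q)$ of Witt index $1$ from Tits' table, and deduce part (1) from the Dirichlet--Hasse--Chevalley unit theorem via Remark \ref{rem:Dirichlet}. The one verification you defer as ``bookkeeping'' --- excluding exotic forms --- is precisely the point the paper settles in its remark on $D_4$ after the proposition: only the trialitarian indices $^3D_4$ and $^6D_4$ could be non-classical, and they are ruled out because their distinguished orbit cannot restrict to the single circled vertex of the rank-one real index of $\mathrm{SO}(7,1)$.
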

A word may be put here to explain the two special cases  $\mathrm{SO}(3,1)$ and $\mathrm{SO}(7,1)$, that  might look different at first sight. The $F$-forms of $D_4$ might be exotic, but the relevant forms described above have the following Tits index:
\[
\begin{tikzpicture}
    \path[-] (0,0) node (a1) {$\bullet$}
    (1,0) node (a2) {$\bullet$}
    (2,0.5) node (a3) {$\bullet$}
    (2,-0.5) node (a4) {$\bullet$};
    \draw (a1) -- (a2) -- (a3);
    \draw (a2) -- (a4);
    \draw (a1) circle[radius=2.3mm];
    \draw [<->] (2.5,0.5) to [out=-45,in=45] (2.5,-0.5);
\end{tikzpicture}
\]
Its Tits notation  is $^2D_{4,1}$ and it yields classical groups. Indeed,  the exotic ones come from triality, namely from Tits indices of the form $^3D_4$ and $^6D_4$. 

Another remark concerning the case $\mathrm{SO}(3,1)$. It is sometimes excluded from classifications of lattices in $\mathrm{SO}(n,1)$ because lattices in the $\mathrm{SO}(3,1)$ case do not  come from totally real number fields. Since we work with semisimple groups that have more than a single simple factor, and as  we did not make any claims  regarding to whether or not $F$ is totally real, our treatment covers $\mathrm{SO}(3,1)$.

Lastly, we turn to the case of $\mathrm{SU}(n,1)$. Its Tits index is given diagrammatically by:
\[
\begin{tikzpicture}
    \path[-] (0,0) node (a1) {$\bullet$}
    (1,0) node (a2) {$\bullet$}
    (1.5, 0) node (dts1) {$\cdots$}
    (2, 0) node (a3) {$\bullet$}
    (3, -0.5) node (a4) {$\bullet$}
    (2, -1) node (a5) {$\bullet$}
    (1.5, -1) node (dts2) {$\cdots$}
    (1, -1) node (a6) {$\bullet$}
    (0, -1) node (a7) {$\bullet$}
    (0, -0.5) node (c) {};
    \draw (a1) -- (a2);
    \draw (a3) -- (a4) -- (a5);
    \draw (a6) -- (a7);
    \draw (c) ellipse[x radius=2.5mm,y radius=7.5mm];
\end{tikzpicture}
\]
Let $\mathbf{H}$ be an $F$-isotropic algebraic $F$-group, such that the $F_v$-Tits index of $\mathbf{H}$ is as above for some Archimedean valuation $v \in S$. As explained above, the $F$-Tits index of $\mathbf{H}$ must be the same as its $F_v$-Tits index. In Tits' notation this index is $^2\text{A}_{n,1}$ . Therefore,  by the description of Tits, we obtain:
\begin{proposition}\label{Form2}
   Any irreducible non-uniform lattice in a semisimple group admitting a factor locally isomorphic to the  real semisimple Lie group   $\mathrm{SU}(n,1)$, has a central extension commensurable to the arithmetic group $\mathbf{H}(R)$, where:
    \begin{enumerate}
        \item $F$ is a number field and $R=F(S)$ is a localization of its ring of integers. If the field $F$ is either $\mathbb{Q}$ or an imaginary quadratic field, then the localization is necessarily non trivial.
        \item $E$ is a quadratic field extension of $F$.
        \item $\mathbf{H}$ the algebraic $F$-group $\mathrm{SU}(q)$, where $q$ is an Hermitian form over $E^{n+1}$ with index $1$.
    \end{enumerate}
\end{proposition}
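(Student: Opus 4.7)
The plan is to repeat the Tits-index argument that established Proposition~\ref{Form1}, now with the index $^2\mathrm{A}_{n,1}$ corresponding to $\mathrm{SU}(n,1)$ in place of the $\mathrm{D}/\mathrm{B}$-type indices arising from $\mathrm{SO}(n,1)$. First I would apply Proposition~\ref{prop:notationIsGood} to pass from the given irreducible non-uniform lattice $\Lambda$ to an arithmetic group $\Gamma = \mathbf{G}(R)$ satisfying the Standing Assumptions~\S\ref{standing assumption}, such that $\Gamma$ is a central extension of a lattice commensurable to $\Lambda$. Since we are in characteristic zero, $F$ is a number field, so $R = F(S)$ is indeed a localization of the ring of integers $\mathcal{O}_F$.

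Next I would identify the Tits index of $\mathbf{G}$ over $F$. By the reduction in Proposition~\ref{prop:notationIsGood}, the hypothesis that the ambient semisimple group has a factor locally isomorphic to $\mathrm{SU}(n,1)$ produces an Archimedean place $v \in S$ at which $\mathbf{G}(F_v)$ is locally isomorphic to $\mathrm{SU}(n,1)$, so the $F_v$-Tits index of $\mathbf{G}$ is $^2\mathrm{A}_{n,1}$. As recalled at the end of \S\ref{subsec:appendix1}, in descending from $F_v$ to the smaller field $F$ the $*$-action orbits on the Dynkin diagram may only coalesce and the distinguished set $\Delta_0$ may only grow. Combined with $\mathrm{rank}_F(\mathbf{G}) = 1$ and the $F$-isotropy of $\mathbf{G}$, this forces the $F$-Tits index of $\mathbf{G}$ to be exactly $^2\mathrm{A}_{n,1}$ as well: the non-trivial Galois involution on the $\mathrm{A}_n$ diagram already present over $F_v$ cannot disappear over $F$, and $|\Delta_0|$ cannot enlarge without violating the $F$-rank being $1$.

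With the $F$-Tits index pinned down, I would invoke Tits' classification (Theorem~\ref{thm:classification of algebraic groups}) together with the table~\cite[Table~II]{Tits}. The $F$-forms realizing the index $^2\mathrm{A}_{n,1}$ are, up to isogeny, precisely the special unitary groups $\mathrm{SU}(q)$ of a non-degenerate Hermitian form $q$ of Witt index $1$ on $E^{n+1}$, where $E/F$ is the quadratic extension encoding the non-trivial Galois $*$-action on the diagram. This yields items~(2) and~(3). Finally, for item~(1), when $F = \mathbb{Q}$ or $F$ is imaginary quadratic the unit group $\mathcal{O}_F^\times$ is finite, whereas the Standing Assumption $|S| \geq 2$ is equivalent to $R^\times$ being infinite by Dirichlet's unit theorem (Remark~\ref{rem:Dirichlet}); hence the localization must be non-trivial in those two cases.

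The only delicate point is the Tits-index matching between $F$ and $F_v$, but since type $\mathrm{A}_n$ $F$-forms are rigidly classified by the quadratic extension $E/F$ and the $F$-rank, this step is clean once the $F$-rank one hypothesis and the Archimedean specialization are both in hand.
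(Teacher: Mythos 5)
Your proposal is correct and follows essentially the same route as the paper: reduce via Proposition~\ref{prop:notationIsGood} to an arithmetic group satisfying the Standing Assumptions, observe that the $F$-Tits index must coincide with the $F_v$-index $^2\mathrm{A}_{n,1}$ because descending from $F_v$ to $F$ can only merge $*$-orbits and enlarge $\Delta_0$ (which the rank-one/isotropy hypothesis forbids), and then read off $\mathrm{SU}(q)$ for a Hermitian form of Witt index $1$ over a quadratic extension from Tits' table, with the non-triviality of the localization in the $\mathbb{Q}$/imaginary-quadratic cases coming from $|S|\ge 2$ and Dirichlet's unit theorem. If anything, your write-up spells out the index-matching step slightly more carefully than the paper does.
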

\bibliographystyle{amsplain}
\bibliography{bibli}

\vspace{0.5cm}

\noindent{\textsc{Department of Mathematics, Weizmann Institute of Science, Israel}}

\noindent{\textit{Email address:} \texttt{alon.dogon@mail.huji.ac.il}} \\
\noindent{\textit{Email address:} \texttt{michael.glasner@weizmann.ac.il}} \\
\noindent{\textit{Email address:} \texttt{yuval.gorfine@gmail.com}} \\
\noindent{\textit{Email address:} \texttt{liamhanany@gmail.com}} \\

\noindent{\textsc{Pure Mathematics Department, Tel Aviv University, Israel}}

\noindent{\textit{Email address:} \texttt{arielevit@tauex.tau.ac.il}} \\

\end{document}